  \let\origmaketitle\maketitle
\def\maketitle{
  \begingroup
  \def\uppercasenonmath##1{} % this disables uppercasing title
  \let\MakeUppercase\relax % this disables uppercasing authors
  \origmaketitle
  \endgroup
}
\newtheorem{definition}{Definition}
\newtheorem{theorem}{Theorem}
\newtheorem{lemma}{Lemma}
\newtheorem{corollary}{Corollary}
\newtheorem{proposition}{Proposition}
\theoremstyle{definition}
\newtheorem{remark}{Remark}
\newcommand{\T}{{\tt Tr}}
\newcommand{\lra}{\leftrightarrow}
\newcommand{\ovl}{\overline}
\long\def\symbolfootnote[#1]#2{\begingroup%
\def\thefootnote{\fnsymbol{footnote}}\footnote[#1]{#2}\endgroup}
\newcommand{\ra}{\rightarrow}
\newcommand{\Ra}{\Rightarrow}
\newcommand{\nat}{\mbb{N}}
\newcommand{\mbb}{\mathbb}
\newcommand{\vphi}{\varphi}
\newcommand{\sth}{\;|\;}
\newcommand{\urc}{\urcorner}
\newcommand{\ulc}{\ulcorner}
\newcommand{\mc}{\mathcal}
\newcommand{\mf}{\mathfrak}
\newcommand{\corn}[1]{\ulc#1\urc}
\newcommand{\lt}{\mc L_{\T}}
\newcommand*\subdot[1]{\oalign{$#1$\cr\hfil.\hfil}}% 
\providecommand\dvee%
\providecommand\dneg%
\providecommand\dwedge%
\providecommand\drightarrow%
\providecommand\dnot%
\providecommand\dbot%
\providecommand\dforall%
\providecommand\dexists%
\newcommand{\sentkf}{{\tt Sent}_{\lt}}
\newcommand{\ct}{{\tt Cterm}_{\lt}}
\newcommand{\lnat}{\mc{L}_\nat}
  \newcommand{\kf}{{\bf KF}}
  \newcommand{\pkf}{{\bf PKF}}
\newcommand{\kfl}{{{\bf KFL}}}
\newcommand{\kfls}{{\bf KFL^*}}
\theoremstyle{definition}
\newcommand{\puu}[1]{\,\text{\d{$#1$}}\,}
	\newenvironment{enumeratei}{\begin{enumerate}%
[label={\textup{(\roman*)}}]\setlength{\itemsep}{.8ex}}%
{\end{enumerate}}
	\newenvironment{itemize-}{\begin{itemize}[label={-}]\setlength{\itemsep}{.8ex}}{\end{itemize}}
\title[Nonclassical truth with classical strength.]{Nonclassical truth with classical strength. \\{\normalfont A proof-theoretic analysis of compositional truth over {\sc hype}.}}
\author{Martin Fischer and Carlo Nicolai and Pablo Dopico Fernandez}
\begin{document}

%FOR THE FINAL CUT:
% Lemma 8: possibly insert more general facts about \klf
% Lemma 11: possibly add one case
%Check the significance of the analysis of the schematic extension for \Gamma_0:
    %the role of the conditional: does the trick work for PKF as well?
    %say something about the fact that, without the classicality of P, you could not pull the trick

\maketitle
 %%%%%%%%%%%%%%%%%%%%%%%%%%%%%%%%%%%%%%%%%
\begin{abstract}
Questions concerning the proof-theoretic strength of classical versus nonclassical theories of truth have received some attention recently. A particularly convenient case study concerns classical and nonclassical axiomatizations of fixed-point semantics. It is known that nonclassical axiomatizations in four- or three-valued logics are substantially weaker than their classical counterparts. In this paper we consider the addition of a suitable conditional to First-Degree Entailment -- a logic recently studied by Hannes Leitgeb under the label {\bf HYPE}. We show in particular that, by formulating the theory {\bf PKF} over {\bf HYPE}, one obtains a theory that is sound with respect to fixed-point models, while being proof-theoretically on a par with its classical counterpart {\bf KF}. Moreover, we establish that also its schematic extension -- in the sense of Feferman -- is as strong as the schematic extension of {\bf KF}, thus matching the strength of predicative analysis.
\end{abstract}

\section{Introduction}

The question whether there are nonclassical formal systems of primitive truth that can achieve significant proof-theoretic strength has received much attention in the recent literature. Solomon Feferman \cite{fef84} famously claimed that `nothing like sustained ordinary reasoning can be carried on' in the standard nonclassical systems that support strong forms of inter-substitutivity of $A$ and `$A$ is true'. One way of understanding this claim is by measuring how much mathematics can be encoded in such systems. Since the strength of mathematical systems (whether classical or nonclassical) is traditionally measured in terms of the ordinals that can be well-ordered by them, the ordinal analysis of nonclassical systems of truth becomes relevant.

%Technical studies
We are mainly interested in the proof-theoretic analysis of nonclassical systems inspired by fixed-point semantics \cite{kri75}. Since fixed-point semantics has nice axiomatizations, both classical and nonclassical, it represents a particularly convenient arena to measure the impact of weakening the logic on proof-theoretic strength. The axiomatization of fixed-point semantics in classical logic -- a.k.a.~$\kf$ -- is known to have the proof-theoretic ordinal $\vphi_{\varepsilon_0}0$ \cite{fef91,can89}.\footnote{Or  $\Gamma_0$, depending on whether one focuses on a version of the theory with or without suitable open-ended substitution rule schemata.} Halbach and Horsten have proposed in \cite{haho06} a nonclassical axiomatization, known as $\pkf$, and showed that it has proof-theoretic ordinal $\vphi_\omega 0$. There have been some attempts to overcome this mismatch in strength on the nonclassical side. \cite{nic17} showed that even without expanding the logical resources of the  theory, $\pkf$ can be extended with suitable instances of transfinite induction to recover all classical true theorems of $\kf$.  \cite{fihoni17} showed that a simple theory featuring nonclassical initial sequents of the form $A\Ra \T\corn{A}$ and $\T\corn{A}\Ra A$ can be closed under special reflection principles to recover the arithmetical strength of $\pkf$ and $\kf$. More recently, \cite{fie20} showed that, by enlarging the primitive concepts of $\pkf$ with a predicate for `classicality', one can achieve the proof-theoretic strength of $\kf$ in both the schematic and non-schematic versions. 

In the paper we explore a different option, which in a sense completes the picture above. We enlarge the standard four-valued logic of $\pkf$ with a new conditional, which is based on the logic ${\bf HYPE}$ recently proposed by \cite{lei19}. The conditional has several features that resemble an intuitionistic conditional, but its weaker interaction with the ${\bf FDE}$ -negation makes it possible to sustain the intersubstitutivity of $A$ and `$A$ is true' for sentences not containing the conditional. This extended theory, that we call $\kfl$, is shown to be proof-theoretically equivalent to $\kf$. Its extension with a schematic substitution rule, called $\kfls$, is shown to be proof-theoretically equivalent to the schematic extension of $\kf$ -- called ${\tt Ref}^*({\tt PA}(P))$ in \cite{fef91}.

In particular, we show  that the conditional of the logic ${\bf HYPE}$ enables one to mimic, when carefully handled, the standard lower bound proofs by Gentzen and Feferman-Sch\"utte for transfinite induction in classical arithmetic (Theorem \ref{thm:genhyp}) and predicative analysis (Proposition \ref{prop:kflstar}), respectively. This enables us to define, in our theories $\kfl$ and $\kfls$, ramified truth predicates indexed by ordinals smaller than $\varepsilon_0$ (Corollary \ref{cor:lowbound}) and $\Gamma_0$ (Corollary \ref{cor:lbkfls}). Moreover, the proof-theoretic analysis of $\kfl$ and $\kfls$ is completed by showing that their truth predicates can be suitably interpreted in their classical counterparts $\kf$ and ${\tt Ref}^*({\tt PA}(P))$ without altering the arithmetical vocabulary (Propositions \ref{prop:ubkfl} and \ref{prop:upschm}).

\section{HYPE}

In this section we will present the logical basis of our systems of truth. We will work with a sequent calculus variant of the logic ${\bf HYPE}$ introduced by Leitgeb in \cite{lei19} by means of a Hilbert style calculus. Essentially, the calculus is obtained by extending First-Degree Entailment with an intuitionistic conditional and with rules for it in a multi-conclusion style. 

\subsection{${\bf  G1h_{cd}}$}

We present a multi-conclusion system based on a multi-conclusion calculus for intuitionistic logic:\footnote{This system goes back to Maehara's version used in Takeuti \cite{tak87} p.52f and Dragalin's system used in Negri and Plato \cite{nepl01} p.108f.} we call it ${\bf  G1h_{cd}}$ for Gentzen system for the logic ${\bf HYPE}$ with constant domains. Sequents are understood as multisets. We work with a language whose logical symbols are $\neg$, $\vee$ , $\ra$, $\forall$, $\bot$. For $\Gamma=\gamma_1,\ldots,\gamma_n$ a multiset, $\neg \Gamma$ is the multiset $\neg\gamma_1,\ldots,\neg\gamma_n$. The logical constants $\wedge, \exists,\lra$ can be defined as usual and $\top$ is defined as $\neg \bot$. Moreover, we can define  `intuitionistic' negation $\sim A$ as $A \rightarrow \bot$, the material conditional $A\supset B$ as $\neg A\vee B$, and  material equivalence $A\equiv B$ as $(A\supset B)\land (B\supset A)$. For $A$ a formula, we write ${\tt FV}(A)$ for the set of its free variables, and ${\tt FV}(\Gamma)$ for the set of free variables in all formulas in $\Gamma$. 

The system ${\bf  G1h_{cd}}$ consists of the following initial sequents and rules:
%For $A$ a formula:
\[ ({\tt ID}_p) \hspace{1cm} A \Ra A  \hspace{3cm} ({\sf L \bot}) \hspace{1cm} \bot \Ra \]
\begin{center}

\AxiomC{$ \Gamma \Ra  \Delta, A $}
		\AxiomC{$A, \Gamma \Ra \Delta $}
		\LeftLabel{(\texttt{Cut})}
		\BinaryInfC{$ \Gamma \Ra  \Delta $}
		\DisplayProof
		
\end{center}

\begin{center}

\begin{tabular}{c c}
	
		&\\
		
		\AxiomC{$\Gamma \Ra \Delta$}
		\LeftLabel{$({\tt L W})$}
		\UnaryInfC{$A, \Gamma \Ra \Delta$}
		\DP
		
		&
		
		\AxiomC{$\Gamma \Ra \Delta$}
		\LeftLabel{$({\tt R W})$}
		\UnaryInfC{$\Gamma \Ra \Delta, A$}
		\DisplayProof
		
		\\
		& \\
		
		\AxiomC{$A,A, \Gamma \Ra \Delta$}
		\LeftLabel{$({\tt L C})$}
		\UnaryInfC{$A, \Gamma \Ra \Delta$}
		\DisplayProof
		
		&
		
		\AxiomC{$\Gamma \Ra \Delta, A,A$}
		\LeftLabel{$({\tt R C})$}
		\UnaryInfC{$\Gamma \Ra \Delta, A$}
		\DisplayProof
		
		\\
		& \\
		
		\AxiomC{$ A, \Gamma \Ra  \Delta $}
		\AxiomC{$ B, \Gamma \Ra  \Delta $}
		\LeftLabel{(\texttt{L$\vee$})}
		\BinaryInfC{$ A \vee B, \Gamma \Ra \Delta$}
		\DisplayProof
		
		&
		
		\AxiomC{$ \Gamma \Ra  A, B,  \Delta$}  \LeftLabel{(\texttt{R$\vee$})}
		\UnaryInfC{$ \Gamma \Ra  A \vee B,  \Delta$}
		\DisplayProof
		
		\\
			&\\
		
		\AxiomC{$ \Gamma \Ra \Delta, A $}
		\AxiomC{$ B, \Gamma \Ra \Delta $}
		\LeftLabel{(\texttt{L$\rightarrow$})}
		\BinaryInfC{$ A \rightarrow B, \Gamma \Ra \Delta$}
		\DisplayProof
		
		&
		
		\AxiomC{$ \Gamma , A \Ra  B $} \LeftLabel{(\texttt{R$\rightarrow$})}
		\UnaryInfC{$ \Gamma \Ra  A \rightarrow B, \Delta$}
		\DisplayProof

\\ & \\
				
		\AxiomC{$\Gamma \Ra \neg \Delta$} \LeftLabel{(\texttt{ConCp})}
		%\RightLabel{$(\neg \Gamma \subseteq \Delta)$}
		\UnaryInfC{$  \Delta \Ra \neg \Gamma$}
		\DisplayProof
		
		&
		
		\AxiomC{$\neg \Gamma \Ra \Delta$}
		\LeftLabel{(\texttt{ClCp})}
		%\RightLabel{($\neg \Delta \subseteq \Gamma$)}
		\UnaryInfC{$\neg \Delta \Ra \Gamma$}
		\DisplayProof
		\\
	& \\
	
			\AxiomC{$A(t) , \Gamma \Ra \Delta$} 
	\LeftLabel{(\texttt{L}$\forall$ )}
	\UnaryInfC{$\forall x A, \Gamma \Ra \Delta$}
	\DisplayProof
	&
		\AxiomC{$\Gamma \Ra  \Delta, A(y)$} 
	\LeftLabel{(\texttt{R}$\forall$)}
	\UnaryInfC{$\Gamma \Ra \Delta,  \forall x  A $}\noLine
	\UnaryInfC{$\text{$y \notin {\tt FV}(\Gamma, \Delta, \forall x A)$}$}
	\DisplayProof
	
		\\ & \\
	\end{tabular}

\end{center}

We write ${\tt rk}(A)$ for the logical complexity of $A$, defined as the number of nodes in the longest branch of its syntactic tree. For a derivation $d$ we let
\begin{itemize}
    \item ${\tt hgt}(d) := {\tt sup}_{i<n} \{{\tt hgt} (d_i) + 1 \sth \text{ $d_i$ an immediate subderivation of $d$}\}$ (the height of the derivation), where $d_0, ..., d_{n}$ are the immediate subderivations of $d$ (the \emph{cut-rank} of $d$).
  %  \item ${\tt crk}(d) := {\tt sup} \{ {\tt rk} (C) + 1 \, | \, C \text{ a cut formula of } d \}$ (the cut-rank of the derivation). 
\end{itemize}
We say that a formula $A$ is derivable in a system, if the sequent $\Ra A$ is derivable in it. 
%On occasion, we write $\vdash^n \Gamma \Ra \Delta$ for `there exists a derivation $d$ of $\Gamma \Ra \Delta$ with ${\tt hgt}(d) \leq n$'.
 %, where ${\tt hgt}(d) := {\tt sup}_{i<n} ({\tt hgt} (d_i) + 1)$ (where $d_0, ..., d_{n}$ are the immediate subderivations of $d$) and ${\tt crk}(d) := {\tt sup} \{ {\tt rk} (C) + 1 \, | \, C \text{ a cut formula of } d \}$. 

The next lemma collects some basic facts about ${\bf  G1h_{cd}}$. They mostly concern the admissibility of some basic inferences in ${\bf  G1h_{cd}}$.
\begin{lemma}\hfill \label{lem:baseprop}
    \begin{enumeratei}\setlength\itemsep{1ex}
        \item  The sequents $\Ra \top$, $A \Ra \neg \neg A$,  $\neg \neg A \Ra A$, are derivable in  ${\bf  G1h_{cd}}$.
        \item The rule of contraposition
            \begin{center}
                 \AXC{$\Gamma \Ra \Delta$}
                     \UIC{$\neg \Delta \Ra \neg\Gamma$}
                 \DP
             \end{center}
             is admissible in ${\bf  G1h_{cd}}$.
    \item The following rules are admissible in ${\bf  G1h_{cd}}$:
    
    	\begin{align*}
	    &\AxiomC{$ A , B, \Gamma \Ra \Delta $} 
		\LeftLabel{${\tt ( L \wedge ) }$} 
		\UnaryInfC{$ A \wedge B, \Gamma \Ra  \Delta$}
		\DisplayProof 
		&&	\AxiomC{$ \Gamma \Ra  A, \Delta $}
		\AxiomC{$ \Gamma \Ra  B, \Delta $}
		\LeftLabel{${ \tt (R\wedge ) }$}
		\BinaryInfC{$ \Gamma \Ra  A \wedge B, \Delta $}
		\DisplayProof\\[1em]
        &	\AxiomC{$A(y) , \Gamma \Ra \Delta$} 
	\LeftLabel{${(\tt L \exists)}$}
	\RightLabel{$y \notin {\tt FV} (\Gamma, \Delta, \exists x A)$}
	\UnaryInfC{$\exists x A, \Gamma \Ra \Delta$}
	\DisplayProof
	        &&\AxiomC{$\Gamma \Ra  \Delta, A(t)$} 
	\LeftLabel{$({\tt R}\exists)$}
	\UnaryInfC{$\Gamma \Ra \Delta,  \exists x  A $}\noLine
	\DisplayProof
	\end{align*}
	
    \item Intersubstitutivity: If $\chi \Ra \chi'$ and $\chi' \Ra \chi$, as well as $\psi$ are derivable in ${\bf  G1h_{cd}}$, then $\psi (\chi' / \chi)$ is derivable, where $\psi (\chi' / \chi)$ is obtained by replacing all occurrences of $\chi$ in $\psi$ by $\chi'$. 
    \end{enumeratei}
\end{lemma}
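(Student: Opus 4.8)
The plan is to establish the four claims in the order listed, since each relies on its predecessors, and throughout to exploit the fact that in ${\bf  G1h_{cd}}$ negation is governed \emph{only} by the contraposition rules \texttt{ConCp} and \texttt{ClCp}. For (i), observe that $\Ra\top$ is just $\Ra\neg\bot$, which follows from the initial sequent $({\sf L}\bot)$, $\bot\Ra$, by a single application of \texttt{ConCp} with empty succedent multiset. For the two double-negation sequents I would start from the identity $\neg A\Ra\neg A$: reading it as $\Gamma\Ra\neg\Delta$ with $\Gamma=\neg A$, $\Delta=A$, one application of \texttt{ConCp} yields $A\Ra\neg\neg A$; reading it as $\neg\Gamma\Ra\Delta$ with $\Gamma=A$, $\Delta=\neg A$, one application of \texttt{ClCp} yields $\neg\neg A\Ra A$.

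For (ii), the point is that \texttt{ConCp}/\texttt{ClCp} only contrapose sequents in which one entire side consists of negations, so the idea is to manufacture such a shape using (i). Given $\Gamma\Ra\Delta$, I would first \texttt{Cut} each succedent formula $\delta$ against the sequent $\delta\Ra\neg\neg\delta$ from (i) (weakening as needed) to obtain $\Gamma\Ra\neg\neg\Delta$; reading $\neg\neg\Delta$ as $\neg(\neg\Delta)$, a single application of \texttt{ConCp} then delivers exactly $\neg\Delta\Ra\neg\Gamma$. This double-negation manoeuvre --- introducing or stripping a $\neg\neg$ by cutting against the sequents of (i), on the left using $A\Ra\neg\neg A$ and on the right using $\neg\neg A\Ra A$ --- is the workhorse that recurs below whenever a contraposition step leaves unwanted double negations.

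For (iii), using the De Morgan definitions $A\wedge B:=\neg(\neg A\vee\neg B)$ and $\exists x A:=\neg\forall x\neg A$, I would first derive a few auxiliary sequents. From $\neg A\Ra\neg A\vee\neg B$ (by \texttt{R}$\vee$) and \texttt{ClCp} one gets $A\wedge B\Ra A$, and symmetrically $A\wedge B\Ra B$; contraposing $\neg A\vee\neg B\Ra\neg A,\neg B$ via (ii) and clearing double negations gives $A,B\Ra A\wedge B$. The admissibility of \texttt{L}$\wedge$ and \texttt{R}$\wedge$ then follows by cutting the hypotheses against these auxiliaries, using contraction to merge the contexts. For the quantifiers, $A(t)\Ra\exists x A$ comes from the \texttt{L}$\forall$-instance $\forall x\neg A\Ra\neg A(t)$ by \texttt{ConCp}, whence \texttt{R}$\exists$ by a \texttt{Cut}; and \texttt{L}$\exists$ is obtained by contraposing the premise through (ii), applying \texttt{R}$\forall$ under the eigenvariable condition (which survives because ${\tt FV}(\forall x\neg A)={\tt FV}(\exists x A)$), then \texttt{ClCp}, and finally removing the resulting antecedent double negations by cuts against (i).

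For (iv), I would prove the stronger congruence statement by induction on $\theta$: if $\chi\Ra\chi'$ and $\chi'\Ra\chi$ are derivable, then so are $\theta\Ra\theta(\chi'/\chi)$ and $\theta(\chi'/\chi)\Ra\theta$. The base cases are the hypotheses themselves (when $\theta$ is the replaced formula) or an instance of $({\tt ID}_p)$ (otherwise); the inductive steps use the matching two-sided rules --- \texttt{L}$\vee$/\texttt{R}$\vee$, \texttt{L}$\to$/\texttt{R}$\to$, \texttt{L}$\forall$/\texttt{R}$\forall$ together with the admissible rules of (iii) --- with the case $\theta=\neg\sigma$ handled by the full contraposition of (ii). The lemma then follows by cutting $\Ra\psi$ against $\psi\Ra\psi(\chi'/\chi)$. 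The main obstacle I anticipate is not any isolated step but the systematic bookkeeping forced by negation: since \texttt{ConCp} and \texttt{ClCp} act only on all-negated sides and invariably \emph{produce} negations, essentially every contraposition must be paired with the double-negation cuts of (i) to restore the intended formulas, and at each quantifier step one must verify that the eigenvariable condition is preserved across the detour through the definitions.
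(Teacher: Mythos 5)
Your proof is correct, and for parts (i)--(iii) it is exactly the argument the paper has in mind: the paper's own proof simply states that these claims ``are direct consequences of the contraposition rules ({\tt ConCp}) and ({\tt ClCp})'', and your derivations --- $\Ra \neg\bot$ from $\bot\Ra$ by {\tt ConCp}, the two double-negation sequents from $\neg A\Ra\neg A$, the double-negation cuts that let {\tt ConCp}/{\tt ClCp} simulate full contraposition, and the De Morgan treatment of $\wedge$ and $\exists$ --- are precisely the missing details, including the correct observation that the eigenvariable condition for ${\tt L}\exists$ survives the detour through $\neg\forall x\neg A$. The only point where you genuinely diverge is (iv): the paper proves intersubstitutivity by induction on the height of the derivation of the given sequent, patching each rule application whose principal formula is $\chi$ with a cut against $\chi\Ra\chi'$ or $\chi'\Ra\chi$, whereas you prove the two-sided congruence $\theta\Ra\theta(\chi'/\chi)$ and $\theta(\chi'/\chi)\Ra\theta$ by induction on the structure of $\theta$ and then apply a single cut to $\Ra\psi$. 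Both routes are standard; yours yields the stronger replacement-of-subformulas lemma as a reusable by-product and localises all the negation bookkeeping in your part (ii), while the paper's derivation-height induction avoids having to handle every connective case of the congruence explicitly (in particular it does not need the $\ra$-monotonicity step you carry out via ${\tt L}\ra$/${\tt R}\ra$). Either way the same standard caveat applies, which neither you nor the paper addresses and which is harmless here: when $\chi$ occurs under a quantifier binding one of its free variables, the hypothesis $\chi\Ra\chi'$ must be read as schematic in that variable.
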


\begin{proof}
    Claims (i)-(iii) are direct consequences of the contraposition rules (\texttt{ConCp}) and (\texttt{ClCp}). (iv) is proved by a straightforward induction on the height of the derivation in ${\bf  G1h_{cd}}$. 
\end{proof}

We opted for this specific formulation of ${\bf  G1h_{cd}}$ mainly because it substantially simplifies the presentation of the results of the next sections, which are the main focus of the paper. From a proof-theoretic point of view, the calculus has some drawbacks even at the propositional level, as the rules {\tt ConCp} and {\tt ClCp} compromise the induction needed for cut-elimination. In the propositional case, even if one removes {\tt ConCp} and {\tt ClCp} and splits the contraposition rule of Lemma \ref{lem:baseprop}(ii) on a case by case manner, problems for cut-elimination remain\cite{fis20}. Moreover, when one moves to the quantificational system, there are deeper problems.  The same counterexample that is employed to show that cut is not admissible in systems of intuitionistic logic with constant domains can be employed for the systems we are investigating.\footnote{See for example L\'opez-Escobar \cite{lop83}.} Both problems can be addressed by employing techniques from Kashima and Shimura \cite{kash94}, which however rely on the extension of the systems with additional resources.

Since cut elimination is not the main focus of our paper, we opt for a more compact presentation of ${\bf  G1h_{cd}}$ that fits nicely our purpose of extending it with arithmetic and truth rules.

\subsection{Semantics}
In this section we present the semantics of ${\bf  G1h_{cd}}$ (and therefore of ${\bf HYPE}$) and sketch its completeness with respect to it. We follow a simplification of the semantics in Leitgeb \cite{lei19} suggested by Speranski \cite{spe20}. Speranski connects ${\bf HYPE}$-models with Routley semantics. A Routley frame $\mathfrak{F}$ is a triple $\langle W, \leq, * \rangle$, where:
\begin{enumeratei}
    \item $W$ is a non-empty set of states;
    \item $\leq$ is a preorder;
    \item $*$ is a function from $W$ to $W$, which is:
    \begin{itemize-}
        \item \emph{antimonotone}, i.e.~for all $w,v \in W$, if $w \leq v$, then $v^* \leq w^*$;
        \item \emph{involutive}, i.e.~for all $w \in W$, $w^{**} = w$.
    \end{itemize-}
\end{enumeratei}
A constant domain model $\mathfrak{M}$ for ${\bf HYPE}$ is a triple $(\mathfrak{F}, {D},I)$ where $\mathfrak{F}$ is a Routley frame, ${D}$ is a non-empty set (the domain of the model), and $I$ is an interpretation function. In particular, $I$ assigns to every constant $c$ an element of $D$ and it associates with each state $w$ and $n$-place predicate $P$ a set $P^{w} \subseteq D^n$. Constants are interpreted rigidly and, although domains do not grow, we impose the following hereditariness condition:
for all $v,w \in W$, if $v \leq w$, then for all predicates $P$, $P^{v} \subseteq P^{w}$.
%and $\overline{P}^{\mathfrak{A}_v} \subseteq \overline{P}^{\mathfrak{A}_w}$.

Let $\mathfrak{M}$ be a constant domain model, $w \in W$ and $\sigma \colon {\rm VAR}\to D$ a variable assignment on $D$, then the forcing relation $\mathfrak{M}, w, \sigma \Vdash A$ is defined inductively:
\begin{flalign*}
& \mathfrak{M}, w, \sigma \Vdash P (x_1,...,x_n) && \text{ iff }    (\sigma (x_1),..., \sigma(x_n)) \in P^{w}; & & \\   
 & \mathfrak{M}, w, \sigma \Vdash \neg  A && \text{ iff }   \mathfrak{M}, w^*, \sigma \nVdash A; \\%\text{ for all } s', \text{ if } s', \sigma \Vdash A \text{, then } s \bot s'; & & \\ 
  %& (iii) & & s, \sigma \Vdash A \wedge B & \text{ iff } &  s, \sigma \Vdash A \, \& \, s, \sigma \Vdash B; & & \\  
 & \mathfrak{M}, w, \sigma \Vdash A \vee B && \text{ iff }   \mathfrak{M}, w, \sigma \Vdash A \text{ or } \mathfrak{M}, w, \sigma{} \Vdash B; & & \\ 
 &  \mathfrak{M}, w, \sigma \Vdash A \rightarrow B && \text{ iff }   \text{ for all } v, \text{ with } w \leq v \text{, if  } \mathfrak{M}, v, \sigma \Vdash A  \text{, then  } \mathfrak{M}, v, \sigma \Vdash B; & & \\   
& \mathfrak{M}, w, \sigma \Vdash \forall x A && \text{ iff }  \text{ for all $x$-variants $\sigma'$ of $\sigma$},  \mathfrak{M}, w, \sigma' \Vdash A;  & & \\  
&\mathfrak{M}, w, \sigma \nVdash \bot.  
   % & (viii) & & s, \sigma \Vdash \exists x A  & \text{ iff } & \text{ there is some $x$-variant } \sigma', \text{ such that } s, \sigma' \Vdash A (x); & & \\  
%& (ix) & & s, \sigma \Vdash \top & & \text{ and }  s, \sigma \nVdash \bot. & & 
\end{flalign*}
Finally, we define logical consequence. We write, for $\Gamma,\Delta$ sets of sentences:
\begin{itemize}
    \item $\mf{M},w\Vdash \Gamma \Ra \Delta$ iff: if $\mf{M},w\Vdash \gamma$ for all $\gamma\in \Gamma$, then $\mf{M},w \Vdash \delta$ for some $\delta\in \Delta$;
    \item $\Gamma \Vdash \Delta$ iff for all $\mf{M},w$:   $\mf{M},w\Vdash \Gamma \Ra \Delta$.
\end{itemize}

The system ${\bf  G1h_{cd}}$ is equivalent to the following Hilbert-style system ${\bf QN^\circ}$ featuring the axiom schemata
\begin{align*}
        &A\ra (B\ra A) &&A\ra (B\ra C)\ra ((A\ra B)\ra (A\ra C))\\
        &A\land B\ra A && A\land B\ra B\\
        &A\ra A\vee B &&B\ra A\vee B\\
        &A\ra (B\ra A\land B) && (A\ra C) \ra ((B\ra C)\ra (A\vee B\ra C))\\
        %&A\land (B\vee C)\lra (A\land B)\vee (A\land C)\\
        %&A\vee (B\land C)\lra (A\vee B)\land (A\vee C)\\
       & \neg\neg A\ra A && A \ra \neg \neg A\\
        %&\neg (A\land B)\lra \neg A \vee \neg B\\
        %&\neg(A\vee B)\lra \neg A\land \neg B\\
        %&\forall x(A\ra B)\ra (\forall x A\ra \forall x B)\\
        &\forall x A \ra A(t)&& A(t)\ra \exists x A
        %&\forall x(A\ra B)\ra (A\ra \forall y (B(y/x)&& \text{($x\notin {\rm FV}(A)$, and $y\equiv x$ or $y\notin {\rm FV}(B)$)} \\
        %&\forall x (A\ra B) \ra (\exists y A(y/x)\ra B) && \text{($x\notin {\rm FV}(B)$, and $x\equiv y$ or $y\notin {\rm FV}(A)$)}\\
        %&\neg\forall x A \lra \exists x \neg A
    \end{align*}
and the following rules of inference:

\begin{align*}
    &\AXC{$A$}
     \AXC{$A \ra B$}\RightLabel{(MP)}
        \BIC{$ B $}
    \DP
    &&
        \AXC{$A \ra B$}\RightLabel{(CP)}
            \UIC{$\neg B \ra \neg A$}
        \DP\\[1em]
 & \AXC{$A \ra B(x)$}
\RightLabel{$x$ not free in $A$}
\UIC{$A \ra \forall x B$}
\DP
 &&
    \AXC{$A(x) \ra B$}
        \RightLabel{$x$ not free in $B$}
        \UIC{$\exists x A \ra B$}
    \DP\\
\end{align*}
${\bf QN^\circ}$ is a neater presentation of ${\bf HYPE}$ where a few redundant principles are dropped. The consequences of the two systems are identical.

That our system ${\bf  G1h_{cd}}$ is equivalent to ${\bf QN}^\circ$ can be seen as follows. ${\bf  G1h_{cd}}$ is an extension of intuitionistic logic (modulo the definition of $\sim A$ as $A\ra \bot)$. Therefore, since all axioms of   ${\bf QN^\circ}$ except for the double negation axioms are intuitionistically valid, Lemma \ref{lem:baseprop} enables us to show that all axioms of ${\bf QN^\circ}$ are consequences of  ${\bf  G1h_{cd}}$. Additionally, Lemma \ref{lem:baseprop} shows that contraposition is admissible in ${\bf  G1h_{cd}}$. Rules for quantifiers are easily established in ${\bf  G1h_{cd}}$. For the other direction a proof on the length of the derivation is sufficient. The fact that  the deduction theorem holds in ${\bf QN^\circ}$ renders the proof particularly simple. Therefore, we have:
\begin{lemma} \label{lem:equiv}
 ${\bf  G1h_{cd}} \vdash \Gamma \Ra \Delta$ iff ${\bf QN^\circ} \vdash \bigwedge \Gamma \ra \bigvee \Delta$.
\end{lemma}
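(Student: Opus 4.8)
The plan is to prove both directions of the biconditional by induction, using Lemma~\ref{lem:baseprop} as the engine that transfers the sequent-calculus facts into the Hilbert setting and vice versa. For the left-to-right direction, I would argue by induction on the height of a ${\bf G1h_{cd}}$-derivation of $\Gamma \Ra \Delta$, showing at each step that ${\bf QN^\circ}\vdash \bigwedge\Gamma\ra\bigvee\Delta$. For the initial sequents this is immediate: $(\tt ID_p)$ gives $A\ra A$, and $({\sf L\bot})$ corresponds to $\bot\ra\bigvee\vno$, i.e.\ essentially $\bot\ra\bot$, which ${\bf QN^\circ}$ proves. For each inference rule I would verify that the corresponding Hilbert-provable implication is preserved; the structural rules (weakening, contraction) and the propositional left/right rules ($\vee$, $\ra$) follow from the axioms together with $(\tt MP)$, the deduction theorem, and elementary propositional reasoning about $\bigwedge$ and $\bigvee$. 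The two contraposition rules $(\tt ConCp)$ and $(\tt ClCp)$ are handled using $(\tt CP)$ together with the double-negation axioms; the quantifier rules use the quantifier axioms and rules, with the variable conditions matching the side conditions on $(\tt R\forall)$.

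For the right-to-left direction I would run a proof by induction on the length of a ${\bf QN^\circ}$-derivation of $\bigwedge\Gamma\ra\bigvee\Delta$, exhibiting for each axiom a ${\bf G1h_{cd}}$-derivation of the corresponding sequent and showing that the two rules $(\tt MP)$ and $(\tt CP)$, as well as the two quantifier rules, are admissible on the sequent side. The excerpt already records most of the needed ingredients: ${\bf G1h_{cd}}$ extends intuitionistic logic (after defining $\sim A$ as $A\ra\bot$), so every intuitionistically valid axiom of ${\bf QN^\circ}$ is derivable; the double-negation axioms are exactly Lemma~\ref{lem:baseprop}(i); contraposition, matching $(\tt CP)$, is Lemma~\ref{lem:baseprop}(ii); and the admissibility of the $\wedge$- and $\exists$-rules from Lemma~\ref{lem:baseprop}(iii) lets me discharge the conjunction/disjunction bookkeeping. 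The rule $(\tt MP)$ is simulated by $({\tt L}\!\ra)$ together with $(\tt Cut)$, and the quantifier rules by $({\tt R}\forall)$ and $({\tt L}\exists)$.

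The key technical device in both directions is passing between a multiset sequent $\Gamma\Ra\Delta$ and the single formula $\bigwedge\Gamma\ra\bigvee\Delta$. Concretely, I would first establish two auxiliary equivalences inside ${\bf G1h_{cd}}$, namely that $\Gamma\Ra\Delta$ is derivable iff $\bigwedge\Gamma\Ra\bigvee\Delta$ is derivable (using $(\tt L\wedge)$, $(\tt R\wedge)$, $(\tt L\vee)$, $(\tt R\vee)$ and their admissible companions from Lemma~\ref{lem:baseprop}), and that $\bigwedge\Gamma\Ra\bigvee\Delta$ is derivable iff $\Ra \bigwedge\Gamma\ra\bigvee\Delta$ is, via $({\tt R}\!\ra)$ in one direction and $({\tt L}\!\ra)$ plus $(\tt Cut)$ against $(\tt ID_p)$ in the other. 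With these in hand the main induction reduces to matching single axioms and single rules.

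The main obstacle I anticipate is the correct handling of the contraposition rules $(\tt ConCp)$ and $(\tt ClCp)$ in the left-to-right direction, since these are genuinely nonstandard and act on whole multisets simultaneously, distributing negation over $\neg\Gamma$ and $\neg\Delta$. Translating them faithfully requires combining $(\tt CP)$ with the double-negation axioms and with De~Morgan-style manipulations that interchange $\bigwedge\neg\Gamma$ with $\neg\bigvee\Gamma$ and $\bigvee\neg\Delta$ with $\neg\bigwedge\Delta$; verifying these interchanges are provable in ${\bf QN^\circ}$ given only the \emph{weak} interaction between $\neg$ and the other connectives is the delicate point, and is precisely where the remark in the excerpt about $\neg$ having limited interaction with the rest of the logic must be used carefully. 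Everything else is routine propositional and quantificational bookkeeping, so I would keep the written proof brief, as the authors do, noting that the deduction theorem for ${\bf QN^\circ}$ makes the converse induction especially smooth.
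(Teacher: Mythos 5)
Your proposal is correct and matches the paper's own argument, which likewise derives all ${\bf QN^\circ}$ axioms and rules in ${\bf  G1h_{cd}}$ via Lemma~\ref{lem:baseprop} (using that ${\bf  G1h_{cd}}$ extends intuitionistic logic plus the double-negation and contraposition facts) for one direction, and runs an induction on the length of the ${\bf  G1h_{cd}}$-derivation, smoothed by the deduction theorem for ${\bf QN^\circ}$, for the other. Your additional care about the multiset-to-formula translation and the De~Morgan manipulations needed for {\tt ConCp} and {\tt ClCp} (derivable in ${\bf QN^\circ}$ from {\tt CP} and the double-negation axioms) only makes explicit what the paper leaves implicit.
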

\noindent Lemma \ref{lem:equiv} then entails that ${\bf  G1h_{cd}}$ is equivalent to Leitgeb's ${\bf HYPE}$.

Speranski \cite{spe20} establishes a strong completeness result (for countable signatures) for ${\bf QN}^\circ$. Speranski uses a Henkin-style proof similar to the strategy employed in Gabbay et al. \cite[\S7.2]{gaea09} for intuitionistic logic with constant domains.  Leitgeb \cite{lei19} establishes a (weak) completeness proof for his Hilbert style system based on the work of G\"ornemann \cite{goe71}. By Lemma \ref{lem:equiv} we can employ Speranski's completeness result for our system ${\bf  G1h_{cd}}$ with respect to Routley semantics:

\begin{proposition}[Completeness of ${\bf  G1h_{cd}}$ \cite{spe20}]
 $\Gamma \Vdash \Delta$ iff there is a finite $\Delta_0 \subseteq \Delta$, such that $\Gamma \vdash_{\bf QN^\circ} \Delta_0$.
\end{proposition}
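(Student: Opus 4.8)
The plan is to treat the two directions separately, establishing the soundness half (right to left) by a routine induction and the completeness half (left to right) by a Henkin-style construction of a canonical Routley model, following Speranski \cite{spe20} and the treatment of intuitionistic logic with constant domains in Gabbay et al.\ \cite{gaea09}. For soundness I would first prove a persistence lemma: for every sentence $A$, if $\mathfrak{M}, w \Vdash A$ and $w \leq v$ then $\mathfrak{M}, v \Vdash A$. This goes by induction on $A$, using hereditariness at atoms, antimonotonicity of $*$ at $\neg$ (if $v^* \Vdash A$ then, since $w\leq v$ gives $v^* \leq w^*$, the induction hypothesis forces $A$ at $w^*$, contradicting $w \Vdash \neg A$), and the quantification over $\leq$-successors already built into the clause for $\ra$. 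With persistence in hand, a second induction on ${\bf QN^\circ}$-derivations shows that every axiom is forced at every state of every model and that (MP), (CP) and the two quantifier rules preserve forcing; the only clauses needing care are the conditional axioms and (CP), where one combines persistence with antimonotonicity and involutivity of $*$. Since a ${\bf QN^\circ}$-derivation of $\bigvee\Delta_0$ from $\Gamma$ is finite, this yields $\Gamma \Vdash \Delta_0$ and hence $\Gamma \Vdash \Delta$.

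For completeness I would argue contrapositively: assume that for no finite $\Delta_0\subseteq\Delta$ is $\bigvee\Delta_0$ derivable from $\Gamma$ in ${\bf QN^\circ}$, and build a model refuting $\Gamma \Vdash \Delta$. The states $W$ of the canonical model are the prime theories in a language expanded by countably many fresh Henkin constants, i.e.\ deductively closed sets $v$ of sentences with the disjunction property ($A\vee B\in v$ implies $A\in v$ or $B\in v$) and the witness property ($\exists x A\in v$ implies $A(c)\in v$ for some $c$). I would take $\leq$ to be set inclusion and define the Routley star by $v^* := \{A : \neg A \notin v\}$. Antimonotonicity of $*$ is then immediate, while involutivity $v^{**}=v$ uses precisely the two double-negation axioms $\neg\neg A\ra A$ and $A\ra\neg\neg A$ together with deductive closure. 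The interpretation assigns to an $n$-place $P$ at $v$ the tuples $\langle c_1,\dots,c_n\rangle$ with $P(c_1,\dots,c_n)\in v$, and hereditariness holds because $\leq$ is inclusion.

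The heart of the argument is a Lindenbaum-style extension lemma: whenever a set $\Sigma$ of sentences does not derive any finite disjunction $\bigvee\Theta_0$ drawn from a set $\Theta$, there is a prime theory $v\supseteq\Sigma$ disjoint from $\Theta$. I would prove it by enumerating all sentences and all existential formulas and, at each stage, either adding a sentence whenever this preserves non-derivability of every $\bigvee\Theta_0$, or adjoining a fresh Henkin witness $A(c)$ for an existential already in the set; the deduction theorem for ${\bf QN^\circ}$ keeps this bookkeeping finitary. Applying the lemma with $\Sigma=\Gamma$ and $\Theta=\Delta$ produces a distinguished state $w_0$ containing $\Gamma$ and missing every member of $\Delta$. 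A Truth Lemma, $\mathfrak{M}, v\Vdash A$ iff $A\in v$, is then proved by induction on $A$: the propositional and quantifier cases are direct from prime-theory closure and the witness property, the $\neg$ case unwinds the definition of $v^*$, and the critical $\ra$ case invokes the extension lemma again — if $A\ra B\notin v$ then $v\cup\{A\}\nvdash B$, so the lemma yields $v'\supseteq v$ with $A\in v'$ and $B\notin v'$, witnessing $\mathfrak{M}, v\nVdash A\ra B$. Evaluating at $w_0$ refutes the sequent.

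The main obstacle I anticipate lies in the extension lemma under the constant-domain requirement. Because all states must share a single domain, the Henkin witnesses introduced while saturating one theory have to remain available at every other state, so the enumeration must interleave the addition of witnesses with the maintenance of the disjunction and non-derivability conditions without exhausting or mismatching the stock of fresh constants. This is the delicate point separating the constant-domain construction from its varying-domain analogue and, together with the verification of involutivity of $*$, it carries essentially all the weight of the proof.
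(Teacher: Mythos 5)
The paper does not actually prove this proposition: it is imported wholesale from Speranski \cite{spe20}, with the only in-paper work being Lemma \ref{lem:equiv} (the equivalence of ${\bf G1h_{cd}}$ with ${\bf QN^\circ}$), which licenses transferring Speranski's strong completeness theorem to the sequent calculus. Your proposal is therefore a reconstruction of the cited proof rather than an alternative to anything in the paper; the strategy you describe (Henkin-style canonical model of prime theories, Routley star defined by $v^* = \{A : \neg A \notin v\}$, Lindenbaum extension, Truth Lemma) is indeed the one the paper attributes to Speranski, following Gabbay et al.\ on constant-domain intuitionistic logic.

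Two points in your sketch deserve flagging. First, your claim that (CP) and the quantifier rules ``preserve forcing'' is false at the level of a single state: from $\mathfrak{M},w \Vdash A \ra B$ one cannot conclude $\mathfrak{M},w \Vdash \neg B \ra \neg A$, since for $v \geq w$ the clause for $\neg$ sends you to $v^*$, and $v \geq w$ gives only $v^* \leq w^*$, not $v^* \geq w$. These rules preserve \emph{validity} (truth at all states of all models), not local forcing. Since the paper's $\Gamma \Vdash \Delta$ is local consequence, soundness requires that $\Gamma \vdash_{\bf QN^\circ} A$ be understood so that (CP) and generalization are applied only to theorems, with (MP) the sole rule acting on premises; as literally stated, your soundness induction would break at the (CP) case. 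Second, in the canonical model you never verify that $v^*$ is itself a state, i.e.\ a prime theory: its deductive closure needs (CP), its disjunction property needs the De Morgan law $\neg A \wedge \neg B \ra \neg(A \vee B)$ to be a theorem of ${\bf QN^\circ}$, and its witness/universal properties interact with the constant-domain saturation you mention. Closure of the state set under $*$, together with the $\forall$-case of the Truth Lemma, is where the genuine difficulty of the constant-domain construction sits, rather than only in the bookkeeping of fresh constants during the extension lemma.
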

We now turn to investigating how much classical reasoning can be reproduced in our logic. Such questions will turn out to be essential components of the analysis of truth theories over ${\bf HYPE}$.

\subsection{HYPE and recapture}

One of the desirable properties of the nonclassical logics employed in the debate on semantic paradoxes is the capability of {recapturing} classical reasoning in domains where there is no risk of paradoxicality, such as mathematics -- see e.g.~\cite{fie08}.\footnote{This form of recapture is a slightly different phenomenon from a direct, provability preserving, translation of the entire language of one theory in the other, as it happens for instance in the famous G\"odel-Gentzen translation or the S4 interpretations of classical in intuitionistic logic, or intutionistic logic in modal logic respectively. While those translations provide a method to reinterpret the logical vocabulary -- by keeping the non-logical vocabulary fixed -- in a provability-preserving way, recapture strategies typically show that, for a specific fragment of its language, the nonclassical theory behaves according to the rules of classical logic. For instance, that a nonclassical theory of truth behaves fully classically if one restricts her attention to the truth-free language. {To carry on with the analogy with the relationships between classical and intuitionistic logic, recapture strategies are much closer to the identity between the $\Delta_1$-fragments of classical and intuitionistic arithmetic.} }

The following lemma summarizes the recapture properties of ${\bf  G1h_{cd}}$ and extensions thereof. It essentially states that, in systems based on ${\bf  G1h_{cd}}$, once we restrict our attention to a fragment of the language satisfying the excluded middle and/or explosion, the native {\bf HYPE}-negation and conditional, as well as the defined intuitionistic negation, all behave classically. 

\begin{lemma}\label{lem:rechyp}\hfill
  \begin{enumeratei}\setlength\itemsep{1em}
    \item The following rules are admissible in extensions of ${\bf  G1h_{cd}}$:
    
    \begin{align*}    
	    &\AxiomC{$\Ra A, \neg A$}
	        \AxiomC{$\Gamma, A \Ra \Delta$}
	            \BinaryInfC{$\Gamma \Ra \neg A, \Delta$}
    	\DisplayProof
	    &&
	    \AxiomC{$A, \neg A \Ra$}
        	\AxiomC{$\Gamma\Ra A,\Delta$}
            	\BinaryInfC{$\Gamma, \neg A \Ra \Delta$}
    	\DisplayProof\\[1ex]
    	&
             \AXC{$\Ra A,\neg A$}
                    \AXC{$\Gamma,A\Ra B,\Delta$}
                        \BIC{$\Gamma \Ra A\ra B,\Delta$}
                    \DP
  \\[1ex]
        &  \AxiomC{$A, \neg A \Ra$}
            \UIC{$\neg A\Ra A\ra \bot$}
                \DP
         &&  \AxiomC{$A, \neg A \Ra$}
            \UIC{$ A\ra \bot \Ra \neg A$}
                \DP
       \\[1ex]
                 &\AXC{$\Ra A,\neg A$}
                    \UIC{$A \ra B \Ra A \supset B$}  
                \DP
                & &     \AXC{$\Ra A,\neg A$}
                    \UIC{$A \supset B \Ra A \ra B$}
                \DP\\
     \end{align*}
     
     \item The previous fact can be used to show, by an induction on ${\tt rk}(A)$, that $\Ra A,\neg A$ is derivable for any formula whenever $\Ra P,\neg P$ is derivable for any atomic $P$ in $A$.
    \end{enumeratei}
\end{lemma}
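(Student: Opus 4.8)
The plan is to prove Lemma~\ref{lem:rechyp}(ii) by induction on ${\tt rk}(A)$, using part~(i) as the toolkit for the inductive steps. The statement I want is: assuming $\Ra P,\neg P$ is derivable for every atomic subformula $P$ of $A$, then $\Ra A,\neg A$ is derivable in the relevant extension of ${\bf G1h_{cd}}$. The base case is immediate, since if $A$ is atomic then $\Ra A,\neg A$ is one of the assumed initial sequents (the case $A=\bot$ is handled separately: $\Ra \bot,\neg\bot$ follows from $\Ra\top$ of Lemma~\ref{lem:baseprop}(i) together with weakening). The induction hypothesis gives $\Ra B,\neg B$ and $\Ra C,\neg C$ for the immediate subformulas, and the goal in each step is to propagate excluded middle through the connective.

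For the inductive steps I would proceed connective by connective. For negation, $A=\neg B$: from $\Ra B,\neg B$ I need $\Ra \neg B,\neg\neg B$, which follows by applying Lemma~\ref{lem:baseprop}(i) ($B\Ra\neg\neg B$) combined with a cut, or more directly by contraposition (Lemma~\ref{lem:baseprop}(ii)) on the induction hypothesis. For disjunction, $A=B\vee C$: from $\Ra B,\neg B$ and $\Ra C,\neg C$ I would derive $\Ra B\vee C,\neg(B\vee C)$; the key is that $\neg(B\vee C)$ behaves like $\neg B\wedge\neg C$, which is recoverable using the contraposition rules {\tt ConCp}/{\tt ClCp} that govern the interaction of $\neg$ with the connectives, together with ({\tt R}$\vee$) and the admissible ({\tt R}$\wedge$) of Lemma~\ref{lem:baseprop}(iii). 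For the conditional, $A=B\ra C$, I would invoke the first rule in the second block of part~(i): having $\Ra C,\neg C$ (hence $\Ra A,\neg A$ with $A:=C$ in that schema) lets me convert a derivation of $B\ra C$ appropriately, but more to the point the dedicated rules of part~(i) are exactly designed to push excluded middle across $\ra$, so the step reduces to assembling $\Ra (B\ra C),\neg(B\ra C)$ from the hypotheses on $B$ and $C$ via those admissible rules. The universal quantifier case, $A=\forall x B$, uses ({\tt R}$\forall$) on the side giving $\forall x B$ and the interaction of $\neg$ with $\forall$ (so that $\neg\forall x B$ reduces to $\exists x\neg B$ via the admissible ({\tt L}$\exists$)/({\tt R}$\exists$) of Lemma~\ref{lem:baseprop}(iii) plus contraposition), the eigenvariable condition being met because $\Ra B(y),\neg B(y)$ is available uniformly in $y$.

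I expect the main obstacle to be the conditional case. Unlike $\vee$, $\neg$, and $\forall$, whose excluded-middle behavior is controlled directly by the contraposition rules of Lemma~\ref{lem:baseprop}, the {\bf HYPE}-conditional has the intuitionistic ({\tt R}$\ra$) rule and only a weak interaction with negation, so $\neg(B\ra C)$ does not decompose into a simple classical equivalent. The point of packaging the dedicated rules in part~(i) is precisely to supply the derivations that are otherwise delicate here; the substantive work is to check that, given $\Ra C,\neg C$ as an instance of excluded middle for $C$, the rule $\Ra A,\neg A$ with $A,\neg A\Ra B,\Delta \;\Rightarrow\; \Gamma\Ra A\ra B,\Delta$ can be chained to yield $\Ra(B\ra C),\neg(B\ra C)$ without the eigenvariable or context conditions obstructing the composition. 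Once the conditional step is secured, the remaining connectives follow the routine pattern described above, and the induction closes.
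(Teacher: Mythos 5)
There is a genuine gap, and it sits exactly where the paper's own proof does its work. First, your proposal proves only part (ii) and treats part (i) as a given ``toolkit,'' but part (i) is itself part of the statement, and its conditional-involving rules are not routine: the rule passing from $\Ra A,\neg A$ and $\Gamma, A\Ra B,\Delta$ to $\Gamma\Ra A\ra B,\Delta$ requires a nontrivial derivation (the paper builds it from three branches, one of which uses $A,\neg A\Ra$ to get $\neg A\Ra A\ra B$, another of which routes through $B\Ra A\ra B$), and the sequent $A\ra B,\neg(A\ra B)\Ra$ likewise needs an explicit argument via {\tt ConCp}. These two derivations are the entire content of the paper's proof; the remaining rules of (i) and the non-conditional cases of (ii) are the easy part.

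Second, within part (ii) your induction hypothesis carries only excluded middle ($\Ra B,\neg B$, $\Ra C,\neg C$), and that is not directly enough to close the conditional case as you set it up. To feed the part-(i) rule you need the premise $B\Ra C,\neg(B\ra C)$, and the only way to obtain it is to contrapose $B\ra C, B\Ra C$ (after inserting $\neg\neg C$) to get $\neg C\Ra \neg(B\ra C),\neg B$, and then cut away the stray $\neg B$ against the \emph{explosion} sequent $B,\neg B\Ra$. So the induction must carry $A,\neg A\Ra$ alongside $\Ra A,\neg A$ --- or one must first observe that the two are interderivable via {\tt ConCp} together with the double-negation sequents of Lemma \ref{lem:baseprop}(i) (cut $\Ra B,\neg B$ with $B\Ra\neg\neg B$ and contrapose to get $B,\neg B\Ra$). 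The paper's displayed derivations use both forms as leaves ($\neg A, A\Ra B$ presupposes explosion for $A$; the step from $\neg(A\ra B)\Ra\neg B$ to $B,\neg(A\ra B)\Ra$ presupposes it for $B$). Your write-up flags the conditional case as ``the main obstacle'' and ``substantive work to check,'' which is honest, but that deferred check is precisely the proof; as written, the argument does not yet go through.
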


\begin{proof}
We prove the claims for the crucial cases in which a conditional is involved:

For (i):

 {\footnotesize\begin{center}
	\AXC{$\Ra A, \neg A$}
	\UIC{$\Gamma \Ra A, \neg A, B, \Delta$}
	\AXC{$\Gamma, A \Ra B, \Delta$}
	\UIC{$\Gamma, A \Ra \neg A, B, \Delta$}
	\BIC{$\Gamma \Ra \neg A, B, \Delta$}
	\UIC{$\Gamma \Ra B, \neg A, A \ra B, \Delta$}
	\AXC{$B,A \Ra B$}
	\UIC{$B \Ra A \ra B$}
	\UIC{$\Gamma, B \Ra  \neg A, A \ra B, \Delta$}
	\BIC{$\Gamma \Ra \neg A, A \rightarrow B,  \Delta$}
	\AXC{$A, \neg A \Ra$}
	\UIC{$A, \neg A \Ra B$}
	\UIC{$\neg A \Ra A \ra B$}
	\UIC{$\Gamma, \neg A \Ra A \ra B, \Delta$}
	\BIC{$\Gamma \Ra A \ra B, \Delta$}
	\DP
 \end{center}}
 
 For (ii): 
\begin{center}

	\AXC{$\neg A, A \Ra B$}
	\UIC{$\neg A \Ra A \ra B$}
	\UIC{$\neg (A \ra B)  \Ra A$}
	\AXC{$B, A \Ra B$}
	\UIC{$B \Ra A \ra B$}
	\UIC{$\neg (A \ra B)  \Ra \neg B$}
	\UIC{$B, \neg (A \ra B)  \Ra $}
	\BinaryInfC{$A \ra B, \neg (A \ra B) \Ra$}
	\DP
	
\end{center}
\end{proof}

\begin{remark}
   The induction involved in Lemma \ref{lem:rechyp}(ii) does not go through in intuitionistic logic with the ${\bf HYPE}$-negation $\neg$ replaced by the intuitionistic negation $\sim$. 
\end{remark}

% Leitgeb requires a substitution principle that allows one to identify $\rightarrow$ and $\supset$. We do not follow this line, but rather work with the first option.

% It is also unproblematic to add function and constant symbols to the language.
% The system ${\bf G1h_{cd}}^=$ is sound and complete for the HYPE-semantics of constant domains. Basically as stated in Leitgeb \cite{lei19} with some minor modifications.

% The addition of function symbols also does not change the status of classicality.

% \begin{corollary}
% Let $\mathcal{L}$ be a language based on classical predicate symbols, i.e.~for all $P$ in $\mathcal{L}$, $\Ra P(t), \neg P(t)$ and $P(t), \neg P(t) \Ra$ for all terms $t$, then for all formulas $A$ of $\mathcal{L}$,  $\Ra A, \neg A$ and $A, \neg A \Ra$ are derivable.
% \end{corollary}

%%%%%%%%%EQUALITY%%%%%%%%%%%%%%%%%%%%
\subsection{Equality}

For our purposes it's important to extend ${\bf  G1h_{cd}}$ a theory of equality. ${\bf  G1h_{cd}^=}$ is obtained by adding to ${\bf  G1h_{cd}}$ the following initial sequents for equality. 
\begin{align*}
   \tag{\tt Ref} &  \Ra t = t  \\
   \tag{\tt Rep} & s = t, A (s) \Ra A (t)
\end{align*}

By an essential use of ${\tt ConCp}$, we can establish in ${\bf  G1h_{cd}^=}$ that identity statements behave classically.
\begin{lemma}\label{lem:eqfde}
 ${\bf  G1h_{cd}^=}$ derives $\Ra s = t, \neg s = t$ and  $s = t, \neg s = t \Ra$.
\end{lemma}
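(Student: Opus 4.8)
The plan is to derive both sequents from the equality initial sequents \texttt{Ref} and \texttt{Rep}, using only machinery already available in ${\bf G1h_{cd}^=}$: the admissible contraposition rule of Lemma \ref{lem:baseprop}(ii) (which itself rests on \texttt{ConCp}), the double-negation sequents $\neg\neg A\Ra A$ and $A\Ra\neg\neg A$ of Lemma \ref{lem:baseprop}(i), and \texttt{Cut}. I would derive the ``explosion'' sequent $s=t,\neg s=t\Ra$ first, and then obtain the ``excluded middle'' sequent $\Ra s=t,\neg s=t$ from it by contraposition, exploiting the fact that the {\bf HYPE}-negation validates double negation in both directions.

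First I would obtain $s=t,\neg s=t\Ra$. Instantiating \texttt{Rep} with $A(x):=\neg\,x=t$ gives $s=t,\neg s=t\Ra\neg t=t$, since here $A(s)$ is $\neg s=t$ and $A(t)$ is $\neg t=t$. Separately, contraposing the reflexivity sequent $\Ra t=t$ (an instance of Lemma \ref{lem:baseprop}(ii) with empty antecedent) yields $\neg t=t\Ra$. A single \texttt{Cut} on $\neg t=t$ then produces $s=t,\neg s=t\Ra$.

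For the excluded-middle sequent I would contrapose the sequent $s=t,\neg s=t\Ra$ just obtained. Since its succedent is empty, Lemma \ref{lem:baseprop}(ii) gives $\Ra\neg s=t,\neg\neg s=t$. Cutting this against the double-negation sequent $\neg\neg s=t\Ra s=t$ of Lemma \ref{lem:baseprop}(i) on the formula $\neg\neg s=t$ then yields $\Ra s=t,\neg s=t$, as desired. The two derivations are in fact dual: one could equally start from excluded middle and recover explosion by contraposition together with $s=t\Ra\neg\neg s=t$.

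The conceptual heart of the argument---the place where genuine content hides behind a short derivation---is the interaction between \texttt{Rep} and the {\bf HYPE}-negation. In a nonclassical setting neither bivalence nor consistency of an atomic predicate comes for free; what makes identity special is that \texttt{Rep} lets us substitute into a \emph{negated} context $\neg\,x=t$, transporting the failure of $s=t$ onto a failure of the reflexivity instance $t=t$. The other delicate point is that the passage between the two sequents relies essentially on \emph{both} directions of double negation, which hold for $\neg$ precisely because of the involutive $*$-operation of the Routley semantics; the analogous argument with the defined intuitionistic negation $\sim$ would break down, since $\sim\sim A\Ra A$ is unavailable (cf.\ the Remark following Lemma \ref{lem:rechyp}). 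I therefore expect the main obstacle to be not the length of the derivations but bookkeeping: checking that contraposition is applied correctly when the antecedent or succedent is empty, and that the correct double-negation direction is cut in at each step.
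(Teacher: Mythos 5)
Your derivation is correct and is essentially the paper's own proof: both instantiate \texttt{Rep} with $A(x):=\neg\,x=t$, cut against the contraposed reflexivity sequent $\neg\,t=t\Ra$ to get $s=t,\neg\,s=t\Ra$, and then recover $\Ra s=t,\neg\,s=t$ by contraposition plus $\neg\neg\,s=t\Ra s=t$. The remaining differences are only bookkeeping (weakenings around \texttt{Cut}), which the paper also suppresses.
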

\begin{proof} We use the identity sequents:

\begin{center}

	\AXC{$s = t, \neg s = t \Ra \neg t = t$}
	\AXC{$\Ra t = t$}
	\UIC{$\neg t = t \Ra$}
	\BIC{$s = t, \neg s = t \Ra$}
	\UIC{$\Ra \neg s = t, \neg \neg s = t$} 
	\UIC{$\Ra \neg s = t, s = t$} 
	\DP
	
	\end{center}
\end{proof}
Lemma \ref{lem:eqfde} reveals some subtle issues concerning the treatment of identity in subclassical logics generally employed to deal with semantical paradoxes. It tells us that identity is essentially treated as a classical notion in ${\bf  G1h_{cd}^=}$. To obtain a similar phenomenon in absence of  ${\tt ConCp}$ and ${\tt ClCp}$, one would have to add the counterpositives of ${\tt Rep}$ and ${\tt Ref}$ to the system. A nonclassical treatment of identity would require some non-trivial changes to ${\tt Rep}$ and ${\tt Ref}$. That identity is a classical notion is perfectly in line with our framework, in which identity is a non-semantic notion akin to mathematical notions.

\section{Arithmetic in {\tt HYPE}}\label{sec:hypari}

Starting with the logical constants introduced above and the identity symbol, we now work with a suitable expansion of the usual signature $\{0,{\tt S},+,\times\}$ by finitely many function symbols for selected primitive recursive functions. Such function symbols are needed for a smooth representation of  formal syntax. We call this language $\lnat^\ra$. We will also make use of the $\ra$-free fragment of the language of arithmetic, which we label as $\lnat$. Our base theory will then be obtained by adding, to the basic axioms for $0,{\tt S},+,\times$ (axioms ${\bf Q}1$-$2$, ${\bf Q}4$-$7$ of \cite{hapu93}), the recursive clauses for these additional function symbols. The resulting system will be called ${\bf HYA}^-$.

In the following, the role of rule and axiom schemata will be crucial. It will be particularly important to keep track of the classes of instances of a particular schema, and therefore we will always relativize schemata to specific languages and understand the schema as the set of all its instances in that language.  For example, in the case of the induction axioms we use the label ${\tt IND}^\rightarrow (\mathcal{L})$ to refer to the set of all sequents of the form 
\[ \tag{${\tt IND}^\rightarrow(\mc{L})$} \Ra A (0) \wedge \forall x (A(x) \rightarrow A(x+1)) \rightarrow \forall x A(x), \]
 where $A$ is a formula of $\mathcal{L}$. Similarly, induction rules ${\tt IND^R(\mc{L})}$ will refer to all rule instances
 
 \medskip
\begin{center}
\AxiomC{$ \Gamma, A(x) \Ra A(x+1), \Delta $}
\RightLabel{${\tt (IND^R(\mc{L}))}$}
\UnaryInfC{$ \Gamma, A (0) \Ra A(t), \Delta $}
\DisplayProof
\end{center}
for  $A$ a formula of $\mathcal{L}$.

We call ${\bf HYA}$ the extension of ${\bf HYA}^-$ by the induction axiom ${\tt IND}^\rightarrow(\mc{L})$. ${\bf HYA}$ is equivalent to Peano Arithmetic ${\bf PA}$. This is essentially because of the recapture properties of our logic. For formulas $A$ containing only classical vocabulary, the properties stated in Lemma \ref{lem:rechyp} entail that the rule and sequent formulations of induction are equivalent.
\begin{lemma} \label{lem:classid}
Let $\mc{L}\supseteq \lnat^\ra$. Over ${\bf HYA}^-$: ${\tt IND^R}(\mc{L})$ and ${\tt IND}^\rightarrow(\mc{L})$ are equivalent when restricted to formulas $A$ such that $\Ra A, \neg A$.
\end{lemma}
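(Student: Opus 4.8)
The goal is to establish the inter-derivability of the induction rule $\tt IND^R(\mc{L})$ and the induction axiom $\tt IND^\ra(\mc{L})$ over $\bf HYA^-$, under the restriction that we deal only with formulas $A$ for which $\Ra A,\neg A$ is derivable. The plan is to prove each direction separately, and the key insight throughout is that the restriction $\Ra A,\neg A$ activates precisely the recapture machinery of Lemma \ref{lem:rechyp}(i), allowing us to deploy the classical-style maneuvers with the $\ra$-conditional that would otherwise fail in the full nonclassical setting.

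For the direction from the axiom to the rule, I would assume $\Gamma, A(x) \Ra A(x+1),\Delta$ and derive $\Gamma, A(0)\Ra A(t),\Delta$. First I would universally generalize the premise: since $x$ is a fresh eigenvariable, applying $(\tt R\ra)$ gives $\Gamma \Ra A(x)\ra A(x+1),\Delta$, and then $(\tt R\forall)$ yields $\Gamma \Ra \forall x(A(x)\ra A(x+1)),\Delta$. Combined with the trivial $\Gamma, A(0)\Ra A(0),\Delta$ and using $(\tt R\wedge)$ from Lemma \ref{lem:baseprop}(iii), I obtain $\Gamma, A(0)\Ra A(0)\wedge \forall x(A(x)\ra A(x+1)),\Delta$. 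Now cutting against the induction axiom instance $\Ra A(0)\wedge\forall x(A(x)\ra A(x+1))\ra \forall x A(x)$ and using $(\tt L\forall)$ to instantiate at $t$ delivers $\Gamma, A(0)\Ra A(t),\Delta$. This direction is essentially purely logical and does not even require the recapture hypothesis.

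The more delicate direction is from the rule to the axiom, and this is where I expect the main obstacle. The natural strategy is to set $\Gamma := A(0)\wedge \forall x(A(x)\ra A(x+1))$ and try to run the rule to conclude $\Gamma \Ra \forall x A(x)$, then discharge via $(\tt R\ra)$. The premise required by $\tt IND^R$ is $\Gamma, A(x)\Ra A(x+1),\Delta$ — we need to extract the step $A(x)\ra A(x+1)$ from $\forall x(A(x)\ra A(x+1))$ by $(\tt L\forall)$ and then eliminate the conditional. The critical point is the left rule for the conditional $(\tt L\ra)$: from $A(x)\ra A(x+1)$ together with a derivation of $A(x)$ we want $A(x+1)$, which is exactly modus ponens. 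In the nonclassical system $(\tt L\ra)$ requires showing $\Gamma\Ra A(x)$ on one branch, which is unproblematic since $A(x)$ sits on the left, and $A(x+1),\Gamma\Ra A(x+1)$ on the other, so the straightforward application of $(\tt L\ra)$ with the side formula $A(x)$ available should close this. The subtlety lies in ensuring that discharging the antecedent $\forall x(A(x)\ra A(x+1))$ back into a conditional via $(\tt R\ra)$ preserves derivability without illegitimately capturing the eigenvariable, and in handling the conjunction on the left correctly via $(\tt L\wedge)$ from Lemma \ref{lem:baseprop}(iii).

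Where the recapture hypothesis genuinely earns its keep is in guaranteeing that the conditional behaves with full classical strength during these manipulations — in particular, that the implication introduced and eliminated respects excluded middle for $A$, so that no constructive obstruction (of the kind flagged in the Remark following Lemma \ref{lem:rechyp}) blocks the passage. Concretely, the assumption $\Ra A,\neg A$ for all relevant instances lets me invoke the third rule of Lemma \ref{lem:rechyp}(i), namely the classical introduction of $A\ra B$ from a premise of the form $\Gamma, A\Ra B,\Delta$ together with $\Ra A,\neg A$, which is exactly the inference needed to convert the rule-premise into conditional form and back without the intuitionistic restriction on the succedent. I therefore expect the proof to amount to carefully threading the two directions through Lemma \ref{lem:baseprop} and Lemma \ref{lem:rechyp}(i), with the genuine work concentrated in verifying the eigenvariable conditions and confirming that every conditional-introduction step in the rule-to-axiom direction falls under a recapture-licensed inference.
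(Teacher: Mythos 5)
Your overall strategy is the right one and the key tool (the recapture form of right-arrow introduction, i.e.\ the third rule of Lemma \ref{lem:rechyp}(i)) is correctly identified, but you have the two directions' difficulty profiles exactly reversed, and this leads to a concretely illegal inference. In the axiom-to-rule direction you write that ``applying $({\tt R}\ra)$ gives $\Gamma\Ra A(x)\ra A(x+1),\Delta$'' from the premise $\Gamma,A(x)\Ra A(x+1),\Delta$, and conclude that this direction ``does not even require the recapture hypothesis''. But $({\tt R}\ra)$ in ${\bf G1h_{cd}}$ is the Maehara/Dragalin-style rule whose premise must have a \emph{single} formula in the succedent ($\Gamma,A\Ra B$); it cannot be applied to a premise carrying the side formulas $\Delta$. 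Converting the rule premise $\Gamma,A(x)\Ra A(x+1),\Delta$ into $\Gamma\Ra A(x)\ra A(x+1),\Delta$ is precisely the step that needs the recapture rule and hence the hypothesis $\Ra A,\neg A$; so the axiom-to-rule direction is where the hypothesis earns its keep (unless one restricts the rule to empty $\Delta$).

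Conversely, the rule-to-axiom direction, which you single out as ``more delicate'' and as the place where recapture is required, is in fact purely intuitionistic and uses the hypothesis nowhere: one derives $\forall x(A(x)\ra A(x+1)), A(x)\Ra A(x+1)$ by $({\tt L}\forall)$ and $({\tt L}\ra)$ from initial sequents and weakening, applies ${\tt IND^R}$ with empty $\Delta$ and a fresh term, then $({\tt R}\forall)$, $({\tt L}\wedge)$, and finally the \emph{ordinary} $({\tt R}\ra)$ --- legitimate here because the succedent at that point is the single formula $\forall x A(x)$. Your final paragraph partially self-corrects by locating the recapture-licensed inference at ``converting the rule-premise into conditional form'', but then again assigns it to the rule-to-axiom direction. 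The proof is salvageable by swapping the roles of the two directions; as written, it asserts a false claim about where $\Ra A,\neg A$ is used and rests one direction on an inapplicable instance of $({\tt R}\ra)$.
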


Since for $A \in \lnat^\ra$, $\Ra A, \neg A$ and $A, \neg A \Ra$ are derivable in ${\bf G1h_{cd}}$, we have the immediate corollary that:

\begin{corollary} \label{corol1} ${\bf HYA}$ is equivalent to ${\bf PA}$.
\end{corollary}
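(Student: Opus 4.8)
The goal is to show \textbf{Corollary \ref{corol1}}: that ${\bf HYA}$ is equivalent to ${\bf PA}$. The plan is to argue that the two theories prove exactly the same arithmetical sequents, splitting the task into the two directions and leveraging the recapture machinery already established. First I would make precise what ``equivalent'' means here: since ${\bf HYA}$ is formulated in the logic ${\bf G1h_{cd}}$ over the language $\lnat^\ra$ and ${\bf PA}$ is a classical theory in $\lnat$, the cleanest reading is that for every sequent $\Gamma \Ra \Delta$ with $\Gamma,\Delta \subseteq \lnat$ (arithmetical, conditional-free formulas), ${\bf HYA} \vdash \Gamma \Ra \Delta$ iff ${\bf PA} \vdash \Gamma \Ra \Delta$. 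I would state this reading at the outset so the rest of the argument has a clear target.

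The key observation that drives both directions is the one the corollary is explicitly derived from: for every $A \in \lnat^\ra$ the sequents $\Ra A, \neg A$ and $A, \neg A \Ra$ are derivable in ${\bf G1h_{cd}}$. This is asserted in the line preceding the corollary and follows from Lemma \ref{lem:rechyp}(ii), since atomic formulas of $\lnat$ are equalities (or decidable arithmetical atoms) for which $\Ra P, \neg P$ holds by Lemma \ref{lem:eqfde}; the induction on ${\tt rk}(A)$ in Lemma \ref{lem:rechyp}(ii) then lifts excluded middle and explosion to all of $\lnat^\ra$. Consequently, on the arithmetical fragment the ${\bf HYPE}$-negation behaves classically, and by the recapture rules of Lemma \ref{lem:rechyp}(i) the native conditional $\ra$ collapses onto the material conditional $\supset$. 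This means that, restricted to $\lnat$, every classical propositional and quantificational inference is available inside ${\bf G1h_{cd}^=}$, so the purely logical gap between the classical base and the ${\bf HYPE}$ base closes on this fragment.

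For the forward direction (${\bf HYA}$ proves everything ${\bf PA}$ proves, restricted to $\lnat$), I would check that ${\bf HYA}^-$ contains the nonlogical axioms ${\bf Q}1$--$7$ and the recursion clauses, which match the basic axioms of ${\bf PA}$, and that the full induction \emph{axiom} ${\tt IND}^\ra(\lnat^\ra)$ is available in ${\bf HYA}$. By Lemma \ref{lem:classid}, applied to the classical formulas $A$ (for which $\Ra A,\neg A$ holds), the axiom form ${\tt IND}^\ra$ yields the induction \emph{rule} ${\tt IND^R}$, and hence all instances of classical induction are derivable; combined with the recovered classical logic on $\lnat$ this reproduces every ${\bf PA}$-proof step inside ${\bf HYA}$. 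For the converse direction I would interpret ${\bf HYA}$ into ${\bf PA}$: every nonlogical axiom of ${\bf HYA}^-$ is a theorem of ${\bf PA}$, and every instance of ${\tt IND}^\ra(\lnat^\ra)$, once its $\ra$ is read materially via recapture, is an ordinary ${\bf PA}$ induction axiom; since all ${\bf G1h_{cd}}$ rules are classically sound, any ${\bf HYA}$-derivation of an arithmetical sequent maps to a ${\bf PA}$-derivation. The main obstacle I anticipate is the careful bookkeeping in this converse embedding, namely verifying that the specifically nonclassical rules ${\tt ConCp}$, ${\tt ClCp}$, and (${\tt R}{\ra}$) remain sound once $\ra$ is collapsed to $\supset$ and that the interpretation commutes with substitution in the induction schema; this is where the equivalence could subtly fail if the conditional did not recapture, so I would devote most of the rigor there, while treating the forward direction as a direct consequence of Lemma \ref{lem:classid}.
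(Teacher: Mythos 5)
Your proposal is correct and follows essentially the same route the paper takes: the paper treats the corollary as immediate from the derivability of $\Ra A,\neg A$ and $A,\neg A\Ra$ for all $A\in\lnat^\ra$ (via Lemma \ref{lem:eqfde} and Lemma \ref{lem:rechyp}(ii)), together with the recapture of classical connectives and the equivalence of the two induction formulations in Lemma \ref{lem:classid}. You simply spell out the two directions and the soundness of the embedding into ${\bf PA}$, which the paper leaves implicit.
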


\subsection{Ordinals and transfinite induction}

Our notational conventions for schemata generalize to sche\-ma\-ta other than induction. A prominent role in the paper will be played by \emph{transfinite induction schemata}. In order to introduce them, we need to assume a notation system $({\tt OT},\prec)$ for ordinals up to the Feferman-Sch\"utte ordinal $\Gamma_0$ as it can be found, for instance, in \cite[Ch.~2]{poh09}. ${\tt OT}$ is a primitive recursive set of ordinal codes and $\prec$ a primitive recursive relation on ${\tt OT}$ that is isomorphic to the usual ordering of ordinals up to $\Gamma_0$. We distinguish between fixed ordinal codes, which we denote with $\alpha,\beta,\gamma\ldots$, and $\zeta,\eta,\theta,\xi,\ldots $ as abbreviations for variables ranging over elements of {\tt OT}. Our representation of ordinals satisfies all standard properties. In particular, we will make implicit use of the properties listed in \cite{trsc00}, p.~322.%From the results in \cite{haho06} it follows that ${\tt PKF}$ proves transfinite induction for $\lt$ only up to any ordinal smaller than $\omega^\omega$. If we focus only on $\mc L$-formulas, however, ${\tt PKF}$ proves transfinite induction for much higher ordinals, in particular, ${\tt PKF}$ proves the same arithmetical sentences as ${\bf PA}$ plus transfinite induction for $\mc L$ up to any ordinal smaller than $\vphi_{\!\omega}0$\carlo{Tweak a little to avoid plagiarism that can be googled}. 

%Before analyzing how much transfinite induction can be proved in $\rtwo{\tso}$, we introduce some notation. 
%We first the define the notion of a progressive formula:
%\begin{align*} 
%	&{\sf Prog}(A) := \forall \xi\prec \eta\;A(\xi)\Rightarrow A(\eta)
%\end{align*}

We will make extensive use of the following abbreviations. We call a formula \emph{progressive} if it is preserved upwards by the ordinals: 
\[
   { \tt Prog}(A) := \forall \eta(\forall\zeta\prec\eta \, A(\zeta)\ra A (\eta))
\]
where $\forall\zeta\prec\eta \, A(\zeta)$ is short for $\forall\zeta(\zeta\prec\eta\ra A(\zeta))$. We will use this (standard) notational convention in several occasions in what follows. Similarly, we will write $\exists \zeta \prec \eta\, A(\zeta)$ for $\exists \zeta(\zeta\prec \eta\land A(\zeta))$.

This formulation of progressiveness is  ${\bf HYA}$-equivalent to a formulation as a sequent $\forall\zeta\prec\eta \, A(\zeta) \Ra A (\eta)$. Moreover, if $A(x)\vee \neg A(x)$ is provable, then ${\tt Prog}(A)$ is ${\bf HYA}$-equivalent to:
\begin{equation}
    \forall \eta(\forall\zeta\prec\eta \, A(\zeta)\supset A (\eta)).
\end{equation}

Transfinite induction up to the ordinal $\alpha \; (\prec \Gamma_0)$ will be formulated as the following sequent:
     \[ \tag{${\tt TI_\alpha}(A)$} {\tt Prog} (A) \Ra \forall \xi \prec \alpha \, A(\xi) \]  

 An alternative would be to use a rule-formulation:  
\[    
    \AxiomC{$\Gamma, \forall \zeta \prec \eta \, A(\zeta) \Ra A(\eta)$} 
	\LeftLabel{${\tt TI_\alpha^r}(A) :=$}
	\UnaryInfC{$\Gamma \Ra\forall\xi\prec\alpha\,A(\xi), \Delta$}
	\DisplayProof
\]
${\tt TI_\alpha^r}(A)$ differs from the standard rule formulation of transfinite induction (see, e.g.~\cite{hal14}) in that its premiss features only one formula in the succedent.

The two formulations of induction just introduced are equivalent over ${\bf HYA}^-$, i.e.~given ${\tt TI_\alpha}(A)$, ${\tt TI_\alpha^r}(A)$ is admissible, and given ${\tt TI_\alpha^r}(A)$, ${\tt TI_\alpha}(A)$ is derivable.\footnote{The notion of admissible rule that we employ is the one from \cite[p.~76]{trsc00}.}  

${\tt TI}_\alpha(\mc{L})$ is short for ${\tt TI}_\alpha (A)$ $\mathrm{for\,every\, formula}\, A\, \mathrm{of\,the\,language}$ $\mc{L}$. ${\tt TI}_{< \alpha} (\mc{L})$ is short for ${\tt TI}_\beta (\mc{L})$ for all $\beta \prec \alpha$. The function $\omega_n$ is recursively defined in the standard way as: $\omega_0=1$, $\omega_{n+1}=\omega^{\omega_n}$. 
 
 %A direct corollary of Lemma  is that ${\bf HYA} \vdash_{\tt H} {\tt TI}_{< \varepsilon_0} ( \lnat )$.

%  The schema of transfinite induction up to $\alpha$ for the formula $A(v)$ of a language $\mc L_1$ containing $\mc L$ is the rule
% \begin{align*}
% 	&\AxiomC{$\forall \xi\prec \eta\;A(\xi)\Rightarrow A(\eta)$}\RightLabel{${\tt TI}_{\mc L_1}(A,\alpha)$}
% 	\UnaryInfC{$ \Ra \forall \xi \prec \alpha \;A ( \xi )$}
% 	\DisplayProof
% \end{align*}
% \noindent We then denote transfinite induction up to some ordinal $\alpha$ with ${\tt TI}_{\mc L_1} (<\alpha )$, standing for the closure under all rules ${ \tt TI}_{\mc L_1} (A,\beta)$ for $A \in \mc L_1$ and $\beta \prec \alpha$. Analogously, we write ${\tt TI}_{\mc L_1} ( \alpha )$ for the closure under all rules ${ \tt TI}_{\mc L_1} (A,\alpha)$ for $A \in \mc L_1$. In what follows, we will only deal with the cases in which $\mc L_1$ is either $\mc L$ itself or $\lt$. 

%

\subsection{Transfinite induction and nonclassical predicates}

Our main purpose in this paper is to study the proof-theoretic properties of extensions of ${\bf HYA}$ with additional predicates that may not behave classically -- i.e.~they may not satisfy Lemma \ref{lem:classid}. In fact, in the case of the pure arithmetical language, Lemma \ref{lem:classid} gives us immediately that ${\bf HYA}$ derives ${\tt TI}_{< \varepsilon_0} ( \lnat^\ra )$.  In this section we show directly that Gentzen's original proof of ${\tt TI}_{< \varepsilon_0} ( \lnat^\ra )$ can be carried out in ${\bf HYA}$ for suitable extensions of $\lnat^\ra$.\footnote{Troelstra \& Schwichtenberg \cite{trsc00} already established that the Gentzen proof can be carried out in the minimal $\rightarrow \forall \bot$ fragment of ${\tt IL}$.}

\begin{theorem} \label{thm:genhyp}
Let $\mathcal{L}^+$ be a language expansion of $\lnat^\ra$ by finitely many predicate symbols. Then 
${\bf HYA} \vdash {\tt TI}_{< \varepsilon_0} (\mathcal{L}^+)$. 
\end{theorem}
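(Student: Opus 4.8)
The plan is to reproduce Gentzen's lower-bound argument for transfinite induction inside ${\bf HYA}$, exploiting that the argument is intuitionistically valid and that ${\bf G1h_{cd}}$ extends intuitionistic logic. The only genuinely nonclassical ingredient is the formula $A$, which may contain the new predicate symbols of $\mathcal{L}^+$ and hence need not satisfy excluded middle or explosion; by contrast, the ordinal machinery ($\prec$, $+$, $\cdot$, $\eta\mapsto\omega^\eta$ and Cantor normal forms) lives in the purely arithmetical fragment $\lnat$, where Lemmas \ref{lem:rechyp} and \ref{lem:classid} recapture classical logic and Corollary \ref{corol1} makes all the relevant ${\bf PA}$-facts about $({\tt OT},\prec)$ available. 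Since $\varepsilon_0=\sup_n\omega_n$ and a fixed code $\alpha\prec\varepsilon_0$ satisfies $\alpha\prec\omega_n$ for some standard $n$, and since ${\tt TI}_{\omega_n}(A)$ trivially yields ${\tt TI}_\alpha(A)$ (as $\forall\xi\prec\omega_n\,A(\xi)$ gives $\forall\xi\prec\alpha\,A(\xi)$ by transitivity of $\prec$), it suffices to prove, by meta-induction on $n$, that ${\bf HYA}\vdash{\tt TI}_{\omega_n}(A)$ for every $\mathcal{L}^+$-formula $A$.

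The engine is a Gentzen jump. For a formula $A(x)$ put
\[ A^{j}(\eta):=\forall\theta\big(\forall\zeta\prec\theta\,A(\zeta)\ra\forall\zeta\prec\theta+\omega^{\eta}\,A(\zeta)\big), \]
which is again an $\mathcal{L}^+$-formula. I would establish two facts, both by purely intuitionistic reasoning about $A$ (so that they transfer, via the $\ra\forall\bot$-fragment derivations of \cite{trsc00}, to ${\bf G1h_{cd}}$). First, ${\tt Prog}(A)\Ra{\tt Prog}(A^j)$: fixing $\eta$ and assuming $\forall\eta'\prec\eta\,A^j(\eta')$ together with a threshold $\theta$ and $\forall\zeta\prec\theta\,A(\zeta)$, one distinguishes (classically, on the decidable matrix) whether $\eta$ is $0$, a successor, or a limit; the zero case uses ${\tt Prog}(A)$ at $\theta$, the limit case uses the hypothesis $A^j(\eta')$ for the $\eta'\prec\eta$ witnessing $\zeta_0\prec\theta+\omega^{\eta'}$, and the successor case $\eta=\eta'+1$ runs an ordinary arithmetical induction on $n$, available since ${\bf HYA}$ includes ${\tt IND}^\ra(\mathcal{L}^+)$ whose instances range over all $\mathcal{L}^+$-formulas (here $\forall\zeta\prec\theta+\omega^{\eta'}\cdot n\,A(\zeta)$), to climb from $\theta$ up to $\theta+\omega^{\eta'}\cdot\omega=\theta+\omega^{\eta}$. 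Second, ${\tt Prog}(A),\,\forall\xi\prec\beta\,A^j(\xi)\Ra\forall\zeta\prec\omega^\beta\,A(\zeta)$: from ${\tt Prog}(A^j)$ (the first fact) and $\forall\xi\prec\beta\,A^j(\xi)$ one gets $A^j(\beta)$ by one application of ${\tt Prog}(A^j)$, and instantiating $A^j(\beta)$ at the threshold $\theta=0$ gives $\forall\zeta\prec\omega^\beta\,A(\zeta)$, since the antecedent $\forall\zeta\prec0\,A(\zeta)$ is vacuously true.

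Combining the two facts yields the jump step at the level of derivability: if ${\bf HYA}$ proves ${\tt TI}_\beta(B)$ for every $\mathcal{L}^+$-formula $B$, then applying this to $B=A^j$ and chaining with the two facts shows ${\bf HYA}\vdash({\tt Prog}(A)\Ra\forall\zeta\prec\omega^\beta\,A(\zeta))$, that is ${\tt TI}_{\omega^\beta}(A)$. The meta-induction then runs as follows. For $n=0$, ${\tt TI}_{\omega_0}(A)={\tt TI}_1(A)$ is immediate, since ${\tt Prog}(A)$ at $0$ gives $A(0)$ while $\xi\prec1$ forces $\xi=0$; the step from $n$ to $n+1$ is exactly the jump with $\beta=\omega_n$, using $\omega^{\omega_n}=\omega_{n+1}$. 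This establishes ${\bf HYA}\vdash{\tt TI}_{\omega_n}(\mathcal{L}^+)$ for every standard $n$, and hence ${\bf HYA}\vdash{\tt TI}_{<\varepsilon_0}(\mathcal{L}^+)$ by the reduction of the first paragraph.

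The step I expect to be most delicate is verifying that the jump argument never covertly appeals to classical behaviour of $A$. Three points must be checked. Progressiveness has to be read with the ${\bf HYPE}$ conditional $\ra$ rather than the material $\supset$ (the two agree only under excluded middle for $A$, as observed above), so that $A$ is pushed only positively through $\ra$ and $\forall$, where intuitionistic logic suffices. Every case split and bound manipulation must stay within the decidable arithmetical matrix $\prec,+,\cdot,\omega^{(\cdot)}$, where classical logic is recaptured by Lemmas \ref{lem:rechyp} and \ref{lem:classid}. Finally, the induction inside the successor case must be recognised as a bona fide instance of ${\tt IND}^\ra(\mathcal{L}^+)$ applied to an $A$-containing, possibly nonclassical formula. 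Once these are in place, nothing beyond intuitionistic derivations and the arithmetical recapture of Section \ref{sec:hypari} is required.
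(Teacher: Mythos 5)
Your proposal is correct and follows essentially the same route as the paper: the same Gentzen jump formula, the same two key facts (${\tt Prog}(A)\Ra{\tt Prog}$ of the jump, proved with an internal $\omega$-induction on $n$ over an $A$-containing formula, and the derivation of ${\tt TI}_{\omega^\beta}$ from ${\tt TI}_\beta$ applied to the jump), and the same iteration through the $\omega_n$. The only difference is cosmetic -- you split the progressiveness argument into zero/successor/limit cases where the paper handles all nonzero cases uniformly via Cantor normal form -- and your cautionary remarks about where ${\tt IND}^\ra(\mathcal{L}^+)$ and arithmetical recapture are needed match the paper's own use of these ingredients.
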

\noindent The rest of this subsection will be devoted to the proof of Theorem \ref{thm:genhyp}, which will involve several preliminary lemmata.

% \begin{lemma} 
%     $(\sf{TI_\alpha})(A)$ implies $(\sf{TI_\alpha}(A) )^\ra$.
% \end{lemma}
% The other direction probably does not hold. 

A key ingredient of Gentzen's proof -- which will also play an important role in subsequent sections -- is Gentzen's jump formula:
 \[
    A^+(\theta) := \forall\xi(\forall\eta(\eta\prec\xi\rightarrow A(\eta))\rightarrow \forall\eta(\eta\prec\xi+\omega^\theta\rightarrow A(\eta))).
\]

\begin{lemma}\label{lemma1}
For any $A\in \mc{L}^+$, ${\bf HYA}$ proves ${\tt Prog} (A) \Ra {\tt Prog}(A^+)$ . 
\end{lemma}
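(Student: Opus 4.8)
The plan is to prove ${\tt Prog}(A) \Ra {\tt Prog}(A^+)$ by unpacking the definition of $A^+(\theta)$ and carrying out the standard Gentzen argument, taking care that every step is licensed by ${\bf HYA}$ over the weaker logic ${\bf HYPE}$. Recall that ${\tt Prog}(A^+)$ means $\forall\theta(\forall\tau\prec\theta\,A^+(\tau)\ra A^+(\theta))$. So first I would assume the antecedent ${\tt Prog}(A)$ is available as a hypothesis, fix $\theta$, assume the induction hypothesis $\forall\tau\prec\theta\,A^+(\tau)$, and aim to derive $A^+(\theta)$. Unfolding $A^+(\theta)$, this amounts to fixing $\xi$, assuming $\forall\eta(\eta\prec\xi\ra A(\eta))$, and showing $\forall\eta(\eta\prec\xi+\omega^\theta\ra A(\eta))$; so I would further fix $\eta$ with $\eta\prec\xi+\omega^\theta$ and try to conclude $A(\eta)$.

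The combinatorial heart is the usual case distinction on $\theta$. In the base case $\theta=0$ we have $\omega^\theta=1$, so $\eta\prec\xi+1$ gives $\eta\preceq\xi$; if $\eta\prec\xi$ the hypothesis $\forall\eta(\eta\prec\xi\ra A(\eta))$ delivers $A(\eta)$ directly, while if $\eta=\xi$ we obtain $A(\xi)$ from ${\tt Prog}(A)$ applied to the hypothesis $\forall\eta\prec\xi\,A(\eta)$. In the successor/limit cases one uses the standard ordinal arithmetic: any $\eta\prec\xi+\omega^\theta$ lies below some $\xi+\omega^\tau\cdot k$ with $\tau\prec\theta$, and by iterating the instance $A^+(\tau)$ supplied by the induction hypothesis $\forall\tau\prec\theta\,A^+(\tau)$ one climbs from the threshold $\xi$ up past $\eta$, applying $A^+(\tau)$ finitely many times. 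This iteration itself is an ordinary arithmetical induction on the relevant multiplicity, which is available since ${\bf HYA}$ is equivalent to ${\bf PA}$ by Corollary \ref{corol1}; the decomposition of ordinals below $\xi+\omega^\theta$ is a property of the notation system $({\tt OT},\prec)$ that I would invoke from the listed standard properties.

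The main obstacle, and the reason this is stated as a lemma about ${\bf HYA}$ rather than taken for granted, is that $A$ ranges over $\mc{L}^+$ and so need not behave classically: we do not have $\Ra A(\eta),\neg A(\eta)$, so excluded-middle-based case splits on whether $A(\eta)$ holds are unavailable, and the recapture machinery of Lemma \ref{lem:rechyp} does not apply to the predicate $A$ itself. I therefore have to be careful to phrase all the case distinctions on the \emph{ordinal} parameters (which are governed by $\prec$ on ${\tt OT}$ and do behave classically, since $\prec$ and $=$ are arithmetical) rather than on the truth value of $A$, and to use only the intuitionistically-valid conditional rules (\texttt{R$\ra$}), (\texttt{L$\ra$}), and the quantifier rules of ${\bf G1h_{cd}}$ when manipulating the nested implications in $A^+$. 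Concretely, the discharging of the assumptions $\forall\eta(\eta\prec\xi\ra A(\eta))$ and the threading of the hypotheses must be done via (\texttt{R$\ra$}) and (\texttt{L$\forall$})/(\texttt{R$\forall$}), never via a material-conditional detour, since $A\supset B$ and $A\ra B$ are interderivable only on the classical fragment. Once the case analysis is arranged so that every branching is on decidable ordinal facts and every use of $A$ is through the hypotheses or through ${\tt Prog}(A)$, the derivation goes through exactly as in Gentzen's classical proof, which is why the footnote's remark about the minimal $\ra\forall\bot$ fragment of intuitionistic logic is the relevant precedent.
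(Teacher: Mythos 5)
Your proposal is correct and follows essentially the same route as the paper's proof: the same unfolding of $A^+(\theta)$, the same case split on $\theta=0$ versus $\theta\succ 0$, the Cantor-normal-form decomposition $\eta\prec\xi+\omega^{\theta_0}\cdot n$ with $\theta_0\prec\theta$, and the climb by $\omega$-induction on $n$ using the instance $A^+(\theta_0)$, together with the same care that all case distinctions are made on decidable ordinal facts rather than on the truth value of $A$. One small correction: the induction on the multiplicity $n$ is applied to the formula $\forall\zeta\prec\xi+\omega^{\theta_0}\cdot n\,A(\zeta)$, which contains the possibly nonclassical predicate $A$, so it is licensed by the induction schema of ${\bf HYA}$ extended to $\mc{L}^+$ rather than by the equivalence with ${\bf PA}$ of Corollary \ref{corol1}, which concerns only the arithmetical fragment.
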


\begin{proof}

% We start by assuming $\Ra \sf{Prog}$($A$), i.e. 

% \begin{center}
%     $\Ra \forall\beta\prec\alpha \, A(\beta) \ra A (\alpha)$
% \end{center}
The informal argument is as follows:
We assume ${\tt Prog}(A)$ and we want to show  ${\tt Prog}(A^+)$, i.e.~$\forall \zeta \prec \theta \, A^+ (\zeta) \ra A^+ (\theta)$. 
So we also assume $\forall \zeta \prec \theta \, A^+ (\zeta)$ and $\forall\zeta(\zeta\prec\xi\rightarrow A(\zeta))$ and $\eta\prec\xi + \omega^\theta$ to show $A(\eta)$.

Informally, we make a case distinction: Either $\theta = 0$ or $\theta \succ 0$. 

 \emph{Case 1}: If  $\theta = 0$, then 
   \begin{equation}\label{eq:tic01}
        \theta = 0, \eta\prec\xi+\omega^\theta\Ra\eta\prec\xi\,\vee\,\eta=\xi.
    \end{equation}
    We have, by the reflexivity sequents and logical rules:
    \begin{align}
        &\forall\zeta \,(\zeta\prec\xi\ra A (\zeta)), \eta\prec\xi \Ra A (\eta)
    \end{align}
    Again by reflexivity and the identity axioms:
    \begin{align}
        &{\tt Prog}(A), \forall\zeta\,(\zeta\prec\xi\ra A (\zeta)), \eta=\xi \Ra A (\eta). 
    \end{align}
By \eqref{eq:tic01} and {\tt Cut}, we obtain
    \begin{equation}\label{eq:tic05}
    \theta = 0, {\tt Prog}(A), \forall\zeta\,(\zeta\prec\xi\ra A (\zeta)), \eta\prec\xi+\omega^\theta \Ra A(\eta).
    \end{equation}
   
     \emph{Case 2}: $\theta \succ0$.
    Then by a derivable version of Cantor's Normal Form Theorem: 
\[ \tag{$\dagger$} \theta \succ 0,  \eta\prec\xi+\omega^\theta\Ra \exists n \,\exists \theta_0\prec \theta (\eta\prec\xi+\omega^{\theta_0}\cdot n). \]

Given that induction for ordinal notations up to $\omega$ is provable in ${\bf HYA}$, we will show by induction on $n\prec\omega$ that 
\[ \forall \zeta \prec \, \theta \, A^+ (\zeta),\theta_0 \prec \theta %\forall \beta \prec \, \alpha \, A^+ (\beta) 
    \Ra \forall \zeta (\zeta\prec \xi + \omega^{\theta_0} \cdot n \rightarrow  A (\zeta)). \]
The base case is straightforward because the following is trivially derivable (by property ({\tt ord6})):
    \begin{equation}
        \forall \eta \prec \xi \, A (\eta)  \Ra (\forall \eta\prec\xi + \omega^{\theta_0}\cdot 0) \, A(\eta).
    \end{equation}
    For the induction step, we start by noticing that by instantiating $\xi$ in $A^+(\theta_0)$ with $\xi+\omega^{\theta_0}\cdot n$, we obtain:
    \begin{equation}
        A^+(\theta_0) \Ra \forall \zeta \prec \xi + \omega^{\theta_0} \cdot n \, A (\zeta)  \ra \forall \zeta \prec \xi + \omega^{\theta_0} \cdot (n +1) \, A (\zeta),
    \end{equation}
    As mentioned, by letting:
    \[
        B(x):= \forall \zeta\prec \xi +\omega^{\theta_0}\cdot x \,A(\zeta)
    \]
    $\textbf{HYA}$ proves the $\omega$-induction principle (with $n\prec \omega$):
    \[
    B(0), \forall n (B(n)\ra B(n+1)) \Ra \forall n\, B(n).
    \]
    Therefore, by a series of cuts, we obtain:
    \begin{equation}\label{eq:tihypea}
        A^+(\theta_0), \forall \zeta \prec \xi \, A (\zeta)  \Ra \forall n \forall \zeta \prec \xi + \omega^{\theta_0} \cdot n \, A (\zeta).
    \end{equation}
    
   From \eqref{eq:tihypea} we obtain:
    \begin{equation}
    \forall \zeta \prec \xi \, A (\zeta), \forall\zeta\prec\theta \, A^+(\zeta), \theta_0\prec\theta \Ra \forall n \forall \zeta \prec \xi + \omega^{\theta_0} \cdot n \, A (\zeta).
        %\alpha \succ 0, \forall\beta\prec\alpha \, A^+(\beta) \Ra \forall \eta \prec \xi \, A (\eta) \ra \forall \eta\prec\xi+\omega^\alpha \, A (\eta)
    \end{equation}
 Therefore, we can instantiate $n$ and $\zeta$ (with $\eta$), and move the antecedent of $\eta \prec \xi + \omega^{\theta_0} \cdot n \ra A (\eta)$ from the right-hand side to the left hand side of the sequent arrow. Since both $n$ and $\eta$ are general, we can existentially generalize over them to get:
    \begin{equation}
            \theta \succ 0, \forall \zeta \prec \xi \, A (\zeta), \forall\zeta\prec\theta \, A^+(\zeta), \exists n \,\exists \theta_0\prec\theta(\eta\prec\xi+\omega^{\theta_0}\cdot n) \Ra  A (\eta),
    \end{equation}
    which in turn by $(\dagger)$ gives us:
    \begin{equation}
        \theta \succ 0, \forall \zeta \prec \xi \, A (\zeta), \forall\zeta\prec\theta \, A^+(\zeta), \eta\prec\xi+\omega^\theta \Ra A (\eta).
    \end{equation}
    Now we combine the two cases. Together with our (\ref{eq:tic05}) in Case 1, the last sequent enable us to derive: 
    \[ \theta = 0 \vee \theta \succ 0,{\tt Prog}(A),  \forall \zeta \prec \xi \, A (\zeta), \forall\zeta\prec\theta \, A^+(\zeta), \eta\prec\xi+\omega^\theta \Ra A (\eta). \]
    By the provability of $\theta = 0 \vee \theta \succ 0$ and applications of the rules $({\tt R} \rightarrow)$ and $({\tt R} \forall)$ we finally get 
    \[ {\tt Prog}(A) \Ra {\tt Prog}(A^+). \]     
\end{proof}
The progressiveness of Gentzen's jump formula enables us then to establish:

\begin{lemma} \label{lemmatij}
If ${\tt TI}_\alpha (\mathcal{L}^+)$ is derivable in $\bf{HYA}$, then ${ \tt TI}_{\omega^\alpha}({\mc L}^+)$ is derivable in $\bf{HYA}$.
\end{lemma}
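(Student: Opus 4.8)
The plan is to carry out the standard Gentzen \emph{jump} step, now that Lemma \ref{lemma1} has supplied the essential fact that the jump formula inherits progressiveness. Fix an arbitrary $A\in\mathcal{L}^+$; the task is to derive ${\tt TI}_{\omega^\alpha}(A)$, namely the sequent ${\tt Prog}(A)\Ra\forall\xi\prec\omega^\alpha\,A(\xi)$. The first thing to note is that the jump formula $A^+$ again lies in $\mathcal{L}^+$, since forming $A^+$ only adds the arithmetical apparatus ($\forall$, $\ra$, $\prec$, $+$, $\omega^{(\cdot)}$) on top of $A$, and $\mathcal{L}^+\supseteq\lnat^\ra$. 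Consequently the hypothesis ${\tt TI}_\alpha(\mathcal{L}^+)$ delivers in particular its instance for the jump formula, ${\tt TI}_\alpha(A^+)$, i.e. ${\tt Prog}(A^+)\Ra\forall\xi\prec\alpha\,A^+(\xi)$.

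I would keep ${\tt Prog}(A)$ in the antecedent throughout. From it Lemma \ref{lemma1} yields ${\tt Prog}(A^+)$, and feeding this into ${\tt TI}_\alpha(A^+)$ via \texttt{Cut} produces $\forall\xi\prec\alpha\,A^+(\xi)$. Instantiating the universal quantifier of ${\tt Prog}(A^+)$ at $\eta:=\alpha$ by (\texttt{L$\forall$}) gives $\forall\zeta\prec\alpha\,A^+(\zeta)\ra A^+(\alpha)$, and a modus-ponens step against the previous sequent yields $A^+(\alpha)$. Instantiating the leading quantifier $\xi$ of $A^+(\alpha)$ at $\xi:=0$ then produces the conditional
\[ \forall\eta(\eta\prec 0\ra A(\eta))\ra\forall\eta(\eta\prec 0+\omega^\alpha\ra A(\eta)). \]
Its antecedent is derivable outright: $\eta\prec 0$ is refutable, so $\eta\prec 0\Ra$ is available, and right weakening gives $\eta\prec 0\Ra A(\eta)$, whence (\texttt{R$\rightarrow$}) and (\texttt{R$\forall$}) yield $\Ra\forall\eta(\eta\prec 0\ra A(\eta))$. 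Discharging the antecedent and rewriting $0+\omega^\alpha$ as $\omega^\alpha$ by (\texttt{Rep}) leaves $\forall\eta\prec\omega^\alpha\,A(\eta)$ on the right, i.e. the sequent ${\tt Prog}(A)\Ra\forall\xi\prec\omega^\alpha\,A(\xi)$, which is ${\tt TI}_{\omega^\alpha}(A)$.

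The conceptual work has already been absorbed into Lemma \ref{lemma1}, so what remains is to check that nothing in the assembly needs resources unavailable in ${\bf HYA}$. The decisive point is that the argument never invokes excluded middle for $A$: every manipulation of $A$ is either a modus-ponens move or a passage through the {\bf HYPE}-conditional $\ra$, all sound in ${\bf HYA}$. Classicality is used only for the arithmetical/ordinal vocabulary $\lnat^\ra$ --- concretely, the refutability of $\eta\prec 0$ and the identity $0+\omega^\alpha=\omega^\alpha$ --- and this is exactly what the recapture results (Lemmata \ref{lem:classid} and \ref{lem:rechyp}), together with the standard properties of $({\tt OT},\prec)$, license. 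I therefore expect the only real obstacle to be bookkeeping: writing the instantiation at $\xi:=0$ and the discharge of the vacuously true antecedent as honest ${\bf G1h_{cd}}$-derivations, with the correct side conditions on the fixed code $\alpha$ and on the eigenvariables introduced by the quantifier rules, rather than any genuine logical difficulty.
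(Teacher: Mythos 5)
Your proposal is correct and follows essentially the same route as the paper's proof: apply the hypothesis to the jump formula $A^+$, use Lemma \ref{lemma1} plus \texttt{Cut} to trade ${\tt Prog}(A^+)$ for ${\tt Prog}(A)$, extract $A^+(\alpha)$, instantiate $\xi:=0$, and discharge the vacuously true antecedent $\forall\eta\prec 0\,A(\eta)$. The only (harmless) difference is bookkeeping order — you cut against Lemma \ref{lemma1} before extracting $A^+(\alpha)$, while the paper does it after — and you spell out the $0+\omega^\alpha=\omega^\alpha$ normalization that the paper leaves implicit.
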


\begin{proof}
We assume ${\tt TI}_\alpha (\mathcal{L}^+)$. Specifically we have 
\begin{equation}
    {\tt Prog}(A^+) \Ra \forall\xi\prec\alpha \,A^+(\xi).
\end{equation}
By the meaning of ${\tt Prog}(A^+)$, we obtain
\begin{equation}
    {\tt Prog}(A^+) \Ra A^+(\alpha).
\end{equation}
By the previous Lemma \ref{lemma1} and cut we also have
\begin{equation}
    {\tt Prog}(A) \Ra  A^+(\alpha).
\end{equation}

which is
\begin{equation}\label{eq:aplusfin}
   {\tt Prog}(A)  \Ra \forall\xi(\forall\eta \prec\xi \, A(\eta) \rightarrow \forall\eta \prec\xi+\omega^\alpha \, A(\eta)).
\end{equation}

But also
\begin{equation}
   \Ra \forall \eta\prec 0 \,A(\eta),
\end{equation}
and therefore by \eqref{eq:aplusfin} taking $\xi = 0$, we obtain

% By universal instantiation and the inversion of the $\tt{(R\ra)}$ rule: 
%     \item $\forall\eta(\eta\prec 0\rightarrow A(\eta))\Ra \forall\eta(\eta\prec\omega^\alpha\rightarrow A(\eta))$ 

% Since the formula on the antecedent is trivial, the succedent follows, and it can be written as: 
    \[
       {\tt Prog}(A)  \Ra \forall\eta\prec\omega^\alpha A(\eta),
    \]
    as desired. 

\end{proof}

\begin{corollary} \label{cor:clti}
    If $A$ is such that ${\bf HYA}$ proves $A(x)\vee \neg A(x)$, we have that, if ${\bf HYA}$ proves the classical transfinite induction axiom schema for $\alpha$
    \[
        (\forall \zeta\prec \eta A(\zeta)\supset A(\eta))\supset \forall \xi\prec \alpha \,A(\xi),
    \]
    then ${\bf HYA}$ proves:
    \[
        (\forall \zeta\prec \eta A(\zeta)\supset A(\eta))\supset \forall \xi\prec \omega^\alpha \,A(\xi).
    \]
\end{corollary}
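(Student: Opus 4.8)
The plan is to reduce the statement to the native, ${\bf HYA}$-formulated Lemma~\ref{lemmatij} by means of recapture, using the hypothesis $\Ra A(x)\vee\neg A(x)$ to pass freely between the material conditional $\supset$ and the native conditional $\ra$. Write ${\tt Prog}^\supset(A)$ for the antecedent $\forall\eta(\forall\zeta\prec\eta\, A(\zeta)\supset A(\eta))$ of the displayed axioms. By the remark preceding the corollary, excluded middle for $A$ makes ${\tt Prog}^\supset(A)$ and ${\tt Prog}(A)$ ${\bf HYA}$-equivalent; and since $\Ra B,\neg B$ yields $B,\neg B\Ra$ for every $B$ (contrapose $\Ra B,\neg B$ by Lemma~\ref{lem:baseprop}(ii) and cut against $B\Ra\neg\neg B$ from Lemma~\ref{lem:baseprop}(i)), the recapture rules of Lemma~\ref{lem:rechyp}(i), together with explosion and {\tt Cut}, allow me to convert a provable material implication into the corresponding sequent, and conversely.

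The first thing I would record is a closure fact: if $A$ satisfies excluded middle, then so does Gentzen's jump $A^+$. Indeed $A^+(\theta)$ is built using only $\ra$ and $\forall$ from instances of $A$ and from the arithmetical atoms $\eta\prec\xi$ and $\eta\prec\xi+\omega^\theta$, all of which satisfy excluded middle in ${\bf HYA}$ (the arithmetical ones by Corollary~\ref{corol1}). Running the induction of Lemma~\ref{lem:rechyp}(ii) with these excluded-middle formulas playing the role of the atomic leaves yields $\Ra A^+(\theta),\neg A^+(\theta)$. Hence $A^+$, too, behaves classically, so the material and native formulations of progressiveness and of transfinite induction agree for $A^+$ over ${\bf HYA}$.

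With this in place the argument is short. Reading the hypothesis schematically, it supplies the classical induction axiom for $\alpha$ at the instance $A^+$ --- a legitimate excluded-middle instance by the previous paragraph --- and recapture turns this into the native sequent ${\tt TI}_\alpha(A^+)$, i.e.~${\tt Prog}(A^+)\Ra\forall\xi\prec\alpha\,A^+(\xi)$. I would then rerun the proof of Lemma~\ref{lemmatij}: from ${\tt TI}_\alpha(A^+)$ one obtains ${\tt Prog}(A^+)\Ra A^+(\alpha)$, composes this with Lemma~\ref{lemma1} (${\tt Prog}(A)\Ra{\tt Prog}(A^+)$) by {\tt Cut} to get ${\tt Prog}(A)\Ra A^+(\alpha)$, and finally instantiates $\xi=0$ in $A^+(\alpha)$ together with $\Ra\forall\eta\prec 0\,A(\eta)$ to reach ${\tt Prog}(A)\Ra\forall\eta\prec\omega^\alpha\,A(\eta)$. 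Since $A$ satisfies excluded middle, recapture converts this native sequent back into the material axiom $\Ra{\tt Prog}^\supset(A)\supset\forall\xi\prec\omega^\alpha\,A(\xi)$, the desired conclusion.

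The genuinely delicate point is the first move of the last paragraph: the hypothesis must be available at the jump $A^+$, and not merely at $A$ itself. This is forced by the nature of Gentzen's method, in which raising the bound from $\alpha$ to $\omega^\alpha$ for a fixed formula cannot be achieved with induction for that same formula but requires induction for the more complex $A^+$. I therefore read the displayed hypothesis as the classical induction \emph{schema} for $\alpha$ (over formulas satisfying excluded middle) --- exactly the regime in which Lemma~\ref{lemmatij} operates --- and the closure fact of the second paragraph is what guarantees that $A^+$ falls under it; the remaining steps are then the routine recapture manipulations set up in the first paragraph.
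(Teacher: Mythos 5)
Your proposal is correct and follows essentially the route the paper intends: the paper states this as an immediate consequence of Lemma~\ref{lemmatij}, relying on the recapture equivalence (noted just before the statement of ${\tt TI}_\alpha$) between ${\tt Prog}(A)$ and its $\supset$-formulation when $A(x)\vee\neg A(x)$ is provable. Your additional observations --- that the hypothesis must be read schematically so as to apply to the jump $A^+$, and that $A^+$ inherits excluded middle via Lemma~\ref{lem:rechyp}(ii) --- are exactly the points the paper leaves implicit, and you have verified them correctly.
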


All is set up to finally prove the main result of this section, the admissibility in ${\bf HYA}$ of the required schema of transfinite induction up to any ordinal $\alpha \prec \varepsilon_0$. 

\begin{proof}[Proof of Theorem \ref{thm:genhyp}]
    The result follows immediately from Lemma \ref{lemmatij}. Since ${\tt TI}_{\omega_0}(A)$ is trivially derivable in ${\bf HYA}$, the lemma tells us that ${\tt TI}_{\omega_n}(A)$, for each $n$, can be reached in finitely many proof steps. 
 \end{proof}
Theorem \ref{thm:genhyp} is key to our proof-theoretic analysis of a theory of truth over ${\bf HYPE}$. We now turn to the definition of such a truth theory.

\section{The Theory of Truth $\kfl$}

In this section we introduce the theory of truth $\kfl$, standing for Kripke-Feferman-Leitgeb. The theory is formulated in the language $\lt^\ra:=\lnat^\ra \cup \{\T\}$, where $\T$ is a unary predicate for truth. $\kfl$ is a theory of truth for a $\ra$-free language $\lt$, which is simply the $\ra$-free fragment of $\lt^\ra$. In $\kfl$, the conditional $\ra$ should be thought of as a theoretical device to articulate our semantic theory, and not as an object of semantic investigation. We elaborate on the role of the conditional in the concluding section \ref{sec:conc}. Semantically (cf.~\S\ref{sect:sem}), the conditional amounts to a device to navigate between fixed point models of $\lt$ in the sense of \cite{kri75}.

\begin{definition}[The language $\lt$]
    The logical symbols of $\lt$ are $\bot, \neg,\vee,\forall$. In addition, we have the identity symbol $=$. Its non-logical vocabulary amounts to the arithmetical vocabulary of $\lnat$ and the truth predicate $\T$. 
\end{definition}

We assume a canonical representation of the syntax of $\lt$  in ${\bf HYA}$. Given the equivalence of ${\bf HYA}$ and ${\bf PA}$ for arithmetical vocabulary stated in Corollary \ref{corol1}, we can assume one of the standard ways of achieving this (e.g.~\cite{can89}).  We apply most of the notational conventions -- e.g.~Feferman's dot notation -- described in \cite[\S I.5]{hal14}.
\begin{definition}[The theory $\kfl$]
 $\kfl$ extends ${\bf HYA}$ formulated in $\lt^ \ra$ -- i.e.~with the induction schema extended to $\lt^\ra$ -- with the following truth initial sequents:
    \begin{align}
        \tag{$\kfl$1}  {\ct} (x) \wedge {\ct} (y) &\Ra \T (x \puu{=} y) \leftrightarrow {\tt val}(x) = {\tt val}(y)  \\
         \tag{$\kfl$2}  &\Ra  \T ( \ulc{\T} \dot{x}\urc) \leftrightarrow \T \, x  \\
        \tag{$\kfl$3}  \sentkf(x) &\Ra \T\puu{\neg}x\lra \neg \T \, x\\
      % \tag{$\kfl$4}  \sentkf(x) \wedge \sentkf(y) & \Ra ( \T  (x \puu{\wedge} y) \leftrightarrow \T ( x) \land \T ( y) ) \\
       \tag{$\kfl$4} \sentkf(x) \wedge \sentkf(y) & \Ra  \T (x \puu{\vee} y) \leftrightarrow \T \,x \vee \T \,y \\
         \tag{$\kfl$5} \sentkf(\puu{\forall} v x ) \wedge {\tt var}(v )&\Ra  \T (\puu{\forall} v x ) \leftrightarrow \forall y({\tt CTerm}_{\lt}(y)\ra \T \,x ({y}/v) )\\
        % \tag{$\kfl$7}  \sentkf(\puu{\forall v} x ) \wedge {\tt var} (v )&\Ra  \T (\puu{\exists v} x ) \leftrightarrow \exists y({\tt CTerm}_{\lt}(y)\land \T \,x ({y}/v)) 
         \tag{$\kfl$6} \T \, x & \Ra \sentkf(x)
    \end{align}
\end{definition}
\noindent In $\kfl$5, $x(y/v)$ denotes the result of substituting, in the formula with code $x$, the variable with code $v$ with the closed term coded by $y$. In particular, in $\kfl$2, $\ulc{\T} \dot{x}\urc$ stands for the result of substituting, in the code of $\T v$, the variable $v$ with the numeral for $x$.

According to Lemma \ref{lem:rechyp} we have that $\bot$, $\supset$ and $\ra$ obey the classical introduction and elimination rules when the antecedent is a formula of $\mc{L}^\ra_{\mathbb{N}}$. An important property of $\kfl$ is that it entails an object-linguistic version of the $\T$-schema for sentences that do not contain the conditional $\ra$. We will return to the philosophical implications of this property in the concluding section. 
\begin{lemma} \label{lemm:disqschem}
    The following are provable in $\kfl$:
    \begin{enumeratei}
        \item ${\tt Sent}_{\lt}(x) \Ra \T \, \ulc{ \neg  \T \, \dot{x}}\urc \lra \T \, \subdot \neg x$
        \item For $A\in \lt$, $\Ra\T\ulc A\urc \lra A$. 
    \end{enumeratei}
\end{lemma}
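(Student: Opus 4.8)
The plan is to derive everything from the compositional axioms $(\kfl 1)$--$(\kfl 6)$ and the structural results about ${\bf G1h_{cd}}$, with intersubstitutivity (Lemma \ref{lem:baseprop}(iv)) doing the real work. Since the biconditional $\lra$ is a defined connective built from the {\bf HYPE}-conditional, I first record the routine move of passing between a derivable biconditional and the corresponding inter-derivable pair of sequents: from $\Ra \chi\lra\chi'$ one extracts $\chi\Ra\chi'$ and $\chi'\Ra\chi$ by $\wedge$-elimination (Lemma \ref{lem:baseprop}(iii)) followed by $({\tt L}\ra)$ and {\tt Cut}, and conversely. Once $\chi$ and $\chi'$ are inter-derivable, Lemma \ref{lem:baseprop}(iv) lets me replace $\chi$ by $\chi'$ inside any derivable formula, and this single principle discharges all the ``congruence of $\lra$ under the connectives'' steps below, with no appeal to excluded middle or to the recapture Lemma \ref{lem:rechyp}, since the relevant components $\T\,\ulc A\urc$ and $A$ are $\ra$-free throughout.

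For (i) I chain two axiom instances. Instantiating $(\kfl 2)$ gives $\Ra \T\,\ulc\T\dot x\urc\lra\T\,x$, so $\T\,\ulc\T\dot x\urc$ and $\T\,x$ are inter-derivable. As $\T\dot x$ is an atomic sentence, ${\bf HYA}$ proves both $\sentkf(\ulc\T\dot x\urc)$ and the syntactic identity $\subdot\neg\ulc\T\dot x\urc=\ulc\neg\T\,\dot x\urc$, so instantiating $(\kfl 3)$ at $\ulc\T\dot x\urc$ yields $\Ra\T\,\ulc\neg\T\,\dot x\urc\lra\neg\T\,\ulc\T\dot x\urc$; replacing $\T\,\ulc\T\dot x\urc$ by $\T\,x$ via Lemma \ref{lem:baseprop}(iv) turns this into $\Ra\T\,\ulc\neg\T\,\dot x\urc\lra\neg\T\,x$. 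Under the hypothesis $\sentkf(x)$ I may also instantiate $(\kfl 3)$ at $x$ itself, obtaining $\Ra\T\,\subdot\neg x\lra\neg\T\,x$, i.e.\ the inter-derivability of $\T\,\subdot\neg x$ and $\neg\T\,x$. A final substitution, replacing $\neg\T\,x$ by $\T\,\subdot\neg x$, produces $\sentkf(x)\Ra\T\,\ulc\neg\T\,\dot x\urc\lra\T\,\subdot\neg x$, as required.

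For (ii) I argue by external induction on ${\tt rk}(A)$, one connective at a time, in each case applying the matching axiom to express $\T\,\ulc A\urc$ compositionally and then substituting the induction hypotheses by Lemma \ref{lem:baseprop}(iv). The atomic cases use $(\kfl 1)$ for equalities $s=t$ (combining $\T\,\ulc s=t\urc\lra{\tt val}(\ulc s\urc)={\tt val}(\ulc t\urc)$ with the ${\bf HYA}$-provable $\,{\tt val}(\ulc s\urc)={\tt val}(\ulc t\urc)\lra s=t$) and $(\kfl 2)$ together with the properties of ${\tt val}$ for truth ascriptions $\T\,t$, with $\bot$ treated via its arithmetical coding as a false atomic sentence. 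For $A=\neg B$ the axiom $(\kfl 3)$ gives $\T\,\ulc\neg B\urc\lra\neg\T\,\ulc B\urc$, into which I substitute the hypothesis $\T\,\ulc B\urc\lra B$ to obtain $\T\,\ulc\neg B\urc\lra\neg B$; for $A=B\vee C$ the axiom $(\kfl 4)$ gives $\T\,\ulc B\vee C\urc\lra\T\,\ulc B\urc\vee\T\,\ulc C\urc$, into which I substitute both hypotheses. In each step the side condition $\sentkf(\cdot)$ of the axiom is discharged because the codes involved are those of genuine $\lt$-sentences, which ${\bf HYA}$ verifies.

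The delicate step is the quantifier case $A=\forall v\,B$, where $(\kfl 5)$ yields $\T\,\ulc\forall v\,B\urc\lra\forall y(\ct(y)\ra\T\,\ulc B\urc(y/v))$. The induction hypothesis as phrased concerns sentences, whereas here I need it uniformly in the term substituted for $v$, so I strengthen the statement proved by induction to its parametrized form: for every $B\in\lt$ with free variables displayed, the biconditional between $\T$ of a substitution-instance code and the corresponding object-level instance is derivable uniformly, with free variables ranging over codes of closed terms, using the ${\bf HYA}$-representation of the substitution function and the dot notation to identify $\ulc B\urc(y/v)$ with the code of the appropriate instance. Granting this uniform hypothesis I substitute under the scope of $\forall y$ (again by Lemma \ref{lem:baseprop}(iv)) and then collapse the quantifier bounded by $\ct$ to the unrestricted $\forall v\,B$, using that every element of the domain is the value of a closed term---indeed of its own numeral---which is provable in ${\bf HYA}$. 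I expect this bookkeeping around substitution and the passage between closed-term codes and domain elements to be the only genuinely delicate point; all the logical moves are licensed by the admissibility results of Lemma \ref{lem:baseprop}.
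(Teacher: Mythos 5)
Your proposal is correct and follows essentially the same route as the paper, which proves (i) directly from the axioms $(\kfl 2)$ and $(\kfl 3)$ and (ii) by external induction on the rank of $A$; your elaborations (extracting inter-derivable sequents from $\lra$, invoking Lemma \ref{lem:baseprop}(iv), and strengthening the induction hypothesis to a uniform, parametrized form for the $(\kfl 5)$ case) are exactly the standard bookkeeping the paper leaves implicit.
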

\begin{proof}
    (i) is immediate by the axioms of $\kfl$, and (ii) is obtained by an external induction on the rank of $A$. 
\end{proof}

\subsection{Semantics} \label{sect:sem}

The intended interpretation of our theory of truth is based on Kripke's fixed point semantics \cite{kri75} and stems from the ${\bf HYPE}$-models presented in Leitgeb \cite[\S7]{lei19}.  Our model will feature a state space, whose states are fixed-points of the usual monotone operator associated with the four-valued evaluation schema as stated in Visser \cite{vis84a} and Woodruff \cite{woo84}. 

Let $\Phi\colon \mc{P}\omega \longrightarrow \mc{P}\omega$ be the operator defined in \cite[Lemma 15.6]{hal14}. States will have the form $(\nat, S)$, where $S$ is a fixed point of $\Phi$. Since we are interested in constant domains and in keeping the interpretation of the arithmetical vocabulary fixed, we omit reference to $\nat$ and identify states with the fixed points themselves. Therefore, we let:
\begin{align}
    &\mathbb{W}:=\{ X \subseteq {\tt Sent}_{\lt}\sth \Phi(X) = X \}, \\
   & S \leq_\mathbb{W} S' :\Leftrightarrow S \subseteq S',    \\
   & S^* := \omega \setminus \overline{S}, \text{ with $\ovl{X}=\{\neg \vphi\sth \vphi\in X\}$}, \\
    & \text{the interpretation of $\T$ is denoted with $\T^S := S$}.
   % & D = \omega \\ 
\end{align}
The intended full model $\mathfrak{M}_\Phi$ is then the ${\bf HYPE}$ model based on the frame $( \mathbb{W},\leq_\mathbb{W}, * )$ with the constant domain $\omega$. 
 The intended minimal model $\mathfrak{M}_\Phi^{\tt min}$ is then given by restricting the set of states to the minimal and maximal fixed points. By a straightforward induction on the height of the derivation in $\kfl$, we obtain:
\begin{lemma}
     If $\kfl\vdash \Gamma \Ra \Delta$, then $\mathfrak{M}_\Phi \Vdash \Gamma \Ra \Delta$.
\end{lemma}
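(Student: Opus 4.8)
The plan is to argue by induction on the height of the derivation in $\kfl$, exploiting that $\mathfrak{M}_\Phi \Vdash \Gamma \Ra \Delta$ unfolds to forcing preservation at every state: for all $w \in \mathbb{W}$ and every assignment $\sigma$, if $\mathfrak{M}_\Phi, w, \sigma \Vdash \gamma$ for each $\gamma \in \Gamma$, then $\mathfrak{M}_\Phi, w, \sigma \Vdash \delta$ for some $\delta \in \Delta$. The induction then splits into two tasks: verifying that every initial sequent of $\kfl$ is valid in $\mathfrak{M}_\Phi$ (the base cases), and verifying that each rule of inference preserves this validity (the inductive steps).

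For the inductive steps, the structural rules ($\texttt{Cut}$, weakening, contraction) and the logical rules of $\mathbf{G1h_{cd}}$ --- including the quantifier rules, licensed by the constant-domain assumption, and the contraposition rules, matched by the antimonotone involutive star $*$ --- are sound with respect to arbitrary Routley frames; this is precisely the soundness half underlying the completeness result for $\mathbf{G1h_{cd}}$. The only point worth flagging is that the rules governing $\ra$ and $\forall$ quantify over $\leq$-successor states, so their verification relies on the hereditariness/persistence of forcing along $\leq_\mathbb{W}$: since $\leq_\mathbb{W}$ is inclusion and every state is a fixed point of $\Phi$, forcing is preserved upward, and this is what lets one discharge the premisses of $(\texttt{R}\ra)$ and $(\texttt{R}\forall)$ at an arbitrary successor. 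I would record this persistence as a preliminary sublemma, proved by induction on formula complexity, with the negation case using antimonotonicity of $*$.

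For the base cases, the arithmetical initial sequents and the identity sequents $\texttt{Ref}$ and $\texttt{Rep}$ hold because the arithmetical signature is interpreted rigidly and identically at every state over the standard domain $\omega$, so each state is a standard model of the arithmetical axioms; the induction schema is then checked at a fixed state by an external induction along $\omega$, again invoking persistence to handle the embedded conditional. The genuinely semantic content lies in the truth sequents $\kfl1$--$\kfl6$: here I would fix a state $S \in \mathbb{W}$, observe that the arithmetical antecedents ($\ct$, $\sentkf$, and so on) are classical and state-independent, and verify each biconditional by unfolding the forcing clauses and appealing to $\Phi(S) = S$. For the positive clauses $\kfl1$, $\kfl2$, $\kfl4$, $\kfl5$, $\kfl6$ this is direct: for instance $\mathfrak{M}_\Phi, S \Vdash \T(x \puu{\vee} y)$ iff the code of the disjunction lies in $S$, iff, by the fixed-point clause for $\vee$, one of the disjuncts lies in $S$, iff $\mathfrak{M}_\Phi, S \Vdash \T\,x \vee \T\,y$; and since every $v \geq_\mathbb{W} S$ is again a fixed point, the equivalence holds at all successors, so the biconditional itself is forced at $S$.

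The main obstacle is the negation sequent $\kfl3$, where the nonclassical character of the logic is essential. One cannot read $\neg\T\,x$ as the mere failure of $\T\,x$; instead $\mathfrak{M}_\Phi, S \Vdash \neg\T\,x$ is computed through the star, as $\mathfrak{M}_\Phi, S^* \nVdash \T\,x$, and the definition of $S^*$ via $\overline{S}$ is tailored precisely so that this tracks membership in $S$ of the code of $\T\puu{\neg}x$, as dictated by the fixed-point clause for negated formulas. I expect the delicate step to be checking that the star point behaves as required --- that $S^*$ is itself a member of $\mathbb{W}$, that evaluating $\T$ at $S^*$ yields the intended ``falsity'' set, and that this agrees with $\Phi(S)=S$ on negations --- together with ensuring that the conditionals nested inside the biconditional are discharged uniformly across all $v \geq_\mathbb{W} S$. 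Once $\kfl3$ is settled the remaining cases are routine, and the same argument restricted to the minimal and maximal fixed points yields soundness over $\mathfrak{M}_\Phi^{\tt min}$ as well.
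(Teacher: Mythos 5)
Your proposal is correct and follows exactly the route the paper takes: the paper's entire proof is the phrase ``by a straightforward induction on the height of the derivation in $\kfl$'', and you carry out precisely that induction, rightly isolating persistence along $\leq_\mathbb{W}$ and the $*$-clause for ($\kfl$3) as the only points requiring care. No discrepancy to report.
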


\subsection{Proof Theory: Lower Bound}
We show that $\kfl$ can define (and therefore prove the well-foundedness of) Tarskian truth predicates for any $\alpha \prec \varepsilon_0$. By the techniques employed in Feferman and Cantini's analyses of the proof theory of $\kf$ \cite{can89,fef91}, this entails that $\kfl$ can prove ${\tt TI}_{<\vphi_{\varepsilon_0}0}(\lnat)$. 

We first define the Tarskian languages. 
\begin{definition}
    For $0\leq \alpha<\Gamma_0$, we let:
    \begin{align*}
        {\tt Sent}_{\lt}(\ovl{0},x) :\lra\;& {\tt Sent}_{\mc{L}_\nat}(x),\\[1ex]
        {\tt Sent}_{\lt}(\zeta+1,x):\lra \; &{\tt Sent}_{\lt}(\zeta,x)\, \vee \\
                                            & (\exists y \leq x)(x=\corn{{\T} \,\dot{y}} \land  {\tt Sent}_{\lt}(\zeta,y)) \, \vee\\
                                            &(\exists y \leq x)(x=(\subdot{\neg} y) \land  {\tt Sent}_{\lt}(\zeta+1,y))\, \vee\\
                                            &(\exists y, z\leq x)(x=(y\subdot{\vee} z)\land  {\tt Sent}_{\lt}(\zeta+1,y)\land  {\tt Sent}_{\lt}(\zeta+1,z))\,\vee\\
                                            &(\exists v, y\leq x)(x=(\subdot{\forall} v y)\land  {\tt Sent}_{\lt}(\zeta+1,y)),
    \\[1ex]  
    {\tt Sent}_{\lt} (\lambda, x):\lra\;& \exists \, \zeta < \lambda \, {\tt Sent}_{\lt}(\zeta,x) \; \text{ for }\lambda \text{ a limit ordinal.} 
       \end{align*}
        We then write:
    \begin{align*}
    {\tt Sent}_{\lt}^{<\alpha}(x):\lra\;& \exists \zeta \prec \alpha \,{\tt Sent}_{\lt}(\zeta,x), \\
        \T_\alpha (x) :\leftrightarrow \;& {\tt Sent}^{< \alpha}_{\lt} (x) \wedge \T (x).
    \end{align*}

\end{definition}

As we mentioned, the arithmetical vocabulary behaves classically in $\kfl$.
\begin{lemma}\label{lem:prog0}
    $\kfl\vdash \forall x({\tt Sent}_{\mc{L}_\nat}(x)\ra \T \,x\vee \T \, \subdot\neg x)$. 
\end{lemma}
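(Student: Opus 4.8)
The goal is to prove
\[
\kfl\vdash \forall x({\tt Sent}_{\mc{L}_\nat}(x)\ra \T \,x\vee \T \, \subdot\neg x).
\]
The plan is to argue inside $\kfl$ by an \emph{internal} induction on the syntactic build-up of arithmetical sentences, using the fact -- already recorded in Lemma \ref{lem:rechyp} and exploited for arithmetic in Corollary \ref{corol1} -- that purely arithmetical ($\ra$-free, $\T$-free) formulas satisfy excluded middle, so that $\Ra x=y,\neg x=y$ and $\Ra \T(x\puu{=}y),\neg\T(x\puu{=}y)$ are available at the atomic level. Since ${\tt Sent}_{\mc{L}_\nat}$ is an arithmetical (hence classically behaving) predicate, I can run the usual proof-by-induction-on-a-formalized-complexity-measure that $\kfl$ proves internally, the induction being ordinary $\ra$-induction from ${\tt IND}^\ra(\lt^\ra)$ applied to the formula $A(x):=\big({\tt Sent}_{\mc{L}_\nat}(x)\ra \T\,x\vee\T\,\subdot\neg x\big)$ along the code/length ordering.

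First I would establish the atomic base case: for closed arithmetical equations $s\puu{=}t$, axiom $\kfl$1 together with Lemma \ref{lem:eqfde} gives $\Ra \T(s\puu{=}t)\,\vee\,\T(\subdot\neg(s\puu{=}t))$, because ${\tt val}(s)={\tt val}(t)$ is a classical arithmetical statement that decides which disjunct holds, and $\kfl$3 converts $\T(\subdot\neg(s\puu{=}t))$ into $\neg\T(s\puu{=}t)$. Then I would carry out the inductive steps by peeling off the outermost connective using the compositional axioms. For negation, if $x=\subdot\neg y$ with $y$ arithmetical, the induction hypothesis for $y$ gives $\T\,y\vee\T\,\subdot\neg y$; applying $\kfl$3 (twice, to $y$ and to $\subdot\neg y$, noting double negation via Lemma \ref{lem:baseprop}(i)) yields $\T\,\subdot\neg y\,\vee\,\T\,\subdot\neg\subdot\neg y$, which is the required disjunction for $x$. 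For disjunction, if $x=(y\subdot\vee z)$, axiom $\kfl$4 reduces $\T\,x$ to $\T\,y\vee\T\,z$ and $\kfl$3--$\kfl$4 reduce $\T\,\subdot\neg x$ to $\neg\T\,y\wedge\neg\T\,z$ (via the De Morgan behaviour derivable from $\kfl$3, $\kfl$4); the two induction hypotheses for $y,z$, combined by classical propositional reasoning (legitimate because all the displayed disjuncts are now decidable by IH), give $\T\,x\vee\T\,\subdot\neg x$. The universal quantifier case uses $\kfl$5 analogously, reducing $\T(\subdot\forall v\,y)$ and $\T\,\subdot\neg(\subdot\forall v\,y)$ to a bounded/unbounded quantification over closed-term instances whose instances fall under the induction hypothesis.

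The main obstacle is organizing the induction correctly as a \emph{formalized} argument inside $\kfl$ rather than an external metatheoretic one: I must phrase it as a single application of arithmetical induction (or transfinite induction along code length) to the $\lt^\ra$-formula $A(x)$ above, which is legitimate because $A(x)$ is an instance of the $\lt^\ra$-induction schema included in $\kfl$, and at each step the disjunctions produced are \emph{decided} -- that is, every $\T$-atom appearing is one for which excluded middle has already been secured by the IH -- so that the classical propositional manipulations (De Morgan laws, case splits) are licensed by the recapture Lemma \ref{lem:rechyp}(ii) and not merely by wishful classical reasoning. The delicate point is that intersubstitutivity and the case-distinction steps must be performed only on decidable formulas; keeping track of this decidability side-condition throughout the induction is where care is required, but once it is respected the argument reduces to the routine compositional unfolding sketched above.
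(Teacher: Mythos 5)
Your proposal is correct and matches the paper's proof, which is given in one line as ``by formal induction on the complexity of the sentence $x\in\mc{L}_\nat$'': your internal induction on a formalized complexity measure, with the atomic case settled by $\kfl$1 and Lemma \ref{lem:eqfde} and the connective/quantifier cases handled via $\kfl$3--$\kfl$5 plus the recapture facts, is exactly the intended argument (indeed the compositional steps you describe reappear explicitly as claims \eqref{gamma}--\eqref{delta} in the following lemma). Your care about performing case splits only on formulas whose excluded middle is already secured is precisely the right side-condition to track.
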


\begin{proof}
    By formal induction on the complexity of the `sentence' $x\in \mc{L}_\nat$. 
\end{proof}

The next two claims establish that the previous fact can be extended to all Tarskian languages whose indices can be proved to be well-founded. First, one shows that the claim `sentences in ${\tt Sent}_{\lt}^{<\eta}$ are either determinately true or determinately false' is progressive.
\begin{lemma}
$\kfl \vdash(\forall \zeta \prec \eta)({\tt Sent}_{\lt}(\zeta,x) \ra \T \,x\vee \T\,\subdot \neg x) \Ra {\tt Sent}_{\lt}({\eta},x) \ra \T \,x\vee \T\, \subdot \neg x$.
 %$\T_\alpha x\vee \T_\alpha \puu{\neg}x\Ra \T_{\alpha+1} x\vee \T_{\alpha+1}\subdot \neg x$
\end{lemma}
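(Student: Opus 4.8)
The plan is to read the displayed sequent as the sequent form of the progressiveness of the formula $A(\eta):=\forall x({\tt Sent}_{\lt}(\eta,x)\ra\T x\vee\T\subdot\neg x)$, where the free $x$ is taken to be universally bound; this reading is already forced by the successor clause below, where one must appeal to the determinateness of proper subcodes of $x$, so the antecedent genuinely has to quantify over all codes. The first move is to trade the truth predicate in the conclusion for excluded middle: since every $x$ with ${\tt Sent}_{\lt}(\zeta,x)$ satisfies $\sentkf(x)$, axiom $\kfl3$ gives $\T\subdot\neg x\lra\neg\T x$, so that $\T x\vee\T\subdot\neg x$ is $\kfl$-equivalent to the instance $\T x\vee\neg\T x$ of excluded middle. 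The lemma thus reduces to showing that excluded middle for $\T x$ is propagated upwards along the Tarskian hierarchy, which is precisely the situation handled by the recapture machinery of Lemma \ref{lem:rechyp}. I would then split on the arithmetically provable trichotomy $\eta=0\,\vee\,\eta=\zeta_0+1\,\vee\,\eta\text{ a limit}$.

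The cases $\eta=0$ and $\eta$ a limit are quick. For $\eta=0$ the conclusion is exactly $\forall x({\tt Sent}_{\lnat}(x)\ra\T x\vee\T\subdot\neg x)$, i.e.~Lemma \ref{lem:prog0}, and the antecedent is not needed. For $\eta$ a limit I would unfold the defining clause ${\tt Sent}_{\lt}(\eta,x)\lra\exists\zeta\prec\eta\,{\tt Sent}_{\lt}(\zeta,x)$, extract a witness $\zeta\prec\eta$, and instantiate the antecedent $\forall\zeta\prec\eta\,A(\zeta)$ at that $\zeta$.

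The substantial case is the successor $\eta=\zeta_0+1$, where ${\tt Sent}_{\lt}(\zeta_0+1,x)$ unfolds into the five-fold disjunction of the defining clause. Here I would prove $\forall x({\tt Sent}_{\lt}(\zeta_0+1,x)\ra\T x\vee\T\subdot\neg x)$ by a formal induction on the logical complexity ${\tt rk}(x)$ -- crucially on the rank and not on the code, since substituting a closed term for a variable preserves rank but may enlarge the code. The two non-compositional clauses are the base: if ${\tt Sent}_{\lt}(\zeta_0,x)$ holds the conclusion is $A(\zeta_0)$ instantiated at $x$, and if $x=\corn{\T\dot y}$ with ${\tt Sent}_{\lt}(\zeta_0,y)$ then $\kfl2$ gives $\T x\lra\T y$ while $A(\zeta_0)$ instantiated at $y$ supplies excluded middle for $\T y$, hence for $\T x$. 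For the compositional clauses $x=\subdot\neg y$ and $x=y\subdot\vee z$, the induction hypothesis delivers excluded middle for the atomic ascriptions $\T y,\T z$, and $\kfl3,\kfl4$ together with Lemma \ref{lem:rechyp}(ii) transport it across the equivalences $\T(\subdot\neg y)\lra\neg\T y$ and $\T(y\subdot\vee z)\lra\T y\vee\T z$.

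The hard part will be the universal clause $x=\subdot\forall v\,y$. By $\kfl5$ the goal reduces to excluded middle for $C:=\forall u(\ct(u)\ra\T(y(u/v)))$, since $\T x\lra C$ and, via $\kfl3$, $\T\subdot\neg x\lra\neg C$. For an eigenvariable $u$ the induction hypothesis applies to $y(u/v)$ -- which has the same rank as $y$, hence below ${\tt rk}(x)$, and which is still a level-$(\zeta_0+1)$ sentence under $\ct(u)$ -- and yields excluded middle for $\T(y(u/v))$ conditional on the arithmetical side condition $\ct(u)$. The obstacle is exactly that in {\bf FDE} neither disjunctive syllogism nor excluded middle for a universally quantified statement is available, so $C\vee\neg C$ cannot simply be posited; this is where Lemma \ref{lem:rechyp} is indispensable. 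Since $\ct(u)$ is arithmetical, and hence fully classical, the recapture rules of Lemma \ref{lem:rechyp}(i) lift excluded middle across the conditional $\ct(u)\ra\T(y(u/v))$, and the quantifier case of the induction underlying Lemma \ref{lem:rechyp}(ii), together with the De Morgan law $\neg\forall u\,B\lra\exists u\,\neg B$, lifts it across the universal quantifier to yield $\Ra C,\neg C$; the recaptured classical rules then close the case. Finally I would note that the internal induction on ${\tt rk}(x)$ is legitimate despite the inducted formula containing $\T$, because the induction schema of $\kfl$ is ${\tt IND}^\ra(\lt^\ra)$ and hence covers all formulas of $\lt^\ra$.
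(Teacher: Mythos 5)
Your proposal follows the paper's proof in all essentials: the same trichotomy on $\eta$ (zero, successor, limit), Lemma \ref{lem:prog0} for the base case, the definition of ${\tt Sent}_{\lt}(\lambda,x)$ for limits, and a formal induction on the complexity of the sentence code for the successor step, discharging the negation, disjunction, truth-ascription and quantifier clauses via $\kfl 2$--$\kfl 5$ and the recapture machinery of Lemma \ref{lem:rechyp}. Your rewriting of $\T x\vee\T\subdot\neg x$ as $\T x\vee\neg\T x$ via $\kfl 3$ is only a cosmetic reformulation of the paper's claims \eqref{gamma}--\eqref{delta}, and your more detailed treatment of the universal clause makes explicit exactly the step the paper's claim \eqref{delta} presupposes.
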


\begin{proof}
    By the definition of ${\tt OT}$, $\kfl$ proves that $\eta \in {\tt OT}$ is either $0$, or a successor ordinal, or a limit. By arguing informally in $\kfl$, we show that the statement of the lemma holds, thereby establishing the claim. 
    
    Lemma \ref{lem:prog0} gives us the base case. The limit case follows immediately by the definition of ${\tt Sent}_{\lt}(\lambda,x)$. For the successor step, one needs to establish (cf.\cite[Lemma 7]{nic17}):
    \begin{equation}
		 		\label{epsi}{\tt Sent}_{\lt}(\zeta, x)\ra \T x\vee \T \subdot \neg x\Ra {\tt Sent}_{\lt} (\zeta+1, y)\ra  \T y\vee \T \subdot \neg y
	 \end{equation}
	 Claim \eqref{epsi} is obtained by a formal induction on the complexity of $y$. Crucially, the proof rests on the following $\kfl$-derivable claims, which provide the cases required by the induction:
	 \begin{align}
			\label{gamma}&{\tt Sent}_{\lt}{x},\T \,x\vee  \T \,\subdot \neg x\Ra \T \,\subdot \neg x\vee \T \,\subdot \neg \subdot \neg x,\\
			& {\tt Sent}_{\lt}{({x}\subdot \vee{y})}, \T \,x\vee \T\, \subdot \neg x,\T \,y\vee \T \,\subdot \neg y \Ra \T({x}\subdot \vee{y})\vee \T \subdot \neg({x}\subdot \vee{y}),\\
			\label{delta}& {\tt Sent}_{\lt} (\subdot \forall{v}{x}), \forall t \,\T \,x(t/v)\vee \neg \forall t\,\T \, x(t/v)\Ra \T(\subdot \forall {v}{x})\vee  \T( \subdot \neg \forall{v}{x}),\\
			&\T \,x \vee \T \,\subdot \neg x\Ra \T\ulc\T \dot x\urc\vee \T\ulc\neg \T \dot x\urc.
		\end{align}
\end{proof}
By Theorem \ref{thm:genhyp}, we obtain:

\begin{corollary}
For any $\alpha < \varepsilon_0$, $\kfl \vdash \forall x \,({\tt Sent}_{\lt} (\alpha,x)\ra \T \,x \vee \T \,\subdot \neg x)$.
\end{corollary}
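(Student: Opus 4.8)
The plan is to read the corollary as a one-line combination of two facts already in hand: the progressiveness of the \emph{determinacy} statement (supplied by the immediately preceding lemma) and the availability of transfinite induction up to $\varepsilon_0$ for $\T$-containing formulas (supplied by Theorem \ref{thm:genhyp}). Concretely, I would abbreviate $A(\eta) := \forall x({\tt Sent}_{\lt}(\eta,x) \ra \T\,x \vee \T \,\subdot \neg x)$. Since ${\tt Sent}_{\lt}(\eta,x)$ is defined purely by arithmetized transfinite recursion along the primitive recursive ordering $\prec$ and contains no occurrence of $\T$, the only nonarithmetical symbol in $A$ is $\T$, so $A$ is a formula of $\lt^\ra = \lnat^\ra \cup \{\T\}$. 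This is exactly a language expansion of $\lnat^\ra$ by one predicate symbol, so Theorem \ref{thm:genhyp} applies with $\mc{L}^+ := \lt^\ra$ and yields ${\bf HYA} \vdash {\tt TI}_{<\varepsilon_0}(\lt^\ra)$. As $\kfl$ extends ${\bf HYA}$ formulated in $\lt^\ra$, we obtain $\kfl \vdash {\tt TI}_\beta(A)$ for every $\beta \prec \varepsilon_0$.

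Next I would extract ${\tt Prog}(A)$ in $\kfl$ from the preceding lemma. That lemma delivers, with $x$ free, the sequent $(\forall\zeta\prec\eta)({\tt Sent}_{\lt}(\zeta,x)\ra \T\,x \vee \T\,\subdot \neg x) \Ra {\tt Sent}_{\lt}(\eta,x)\ra \T\,x\vee \T\,\subdot \neg x$. To obtain progressiveness of $A$ I would first note that $\forall\zeta\prec\eta\,A(\zeta)$ entails, by $({\tt L}\forall)$ together with the intuitionistically valid quantifier shift $\forall\zeta(\zeta\prec\eta\ra\forall x\,\varphi)\ra\forall x\forall\zeta(\zeta\prec\eta\ra\varphi)$ (legitimate because $x$ is not free in $\zeta\prec\eta$), the antecedent of the lemma's sequent for that same generic $x$. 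Cutting against the lemma and then applying $({\tt R}\forall)$ on $x$ produces $\forall\zeta\prec\eta\,A(\zeta)\Ra A(\eta)$, which by the ${\bf HYA}$-equivalence of the sequent and formula formulations of progressiveness recorded in \S\ref{sec:hypari} is precisely ${\tt Prog}(A)$.

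Finally I would assemble the two ingredients. Fixing $\alpha\prec\varepsilon_0$, I cut the derived ${\tt Prog}(A)$ against the instance ${\tt Prog}(A)\Ra\forall\xi\prec\alpha\,A(\xi)$ of ${\tt TI}_\alpha(A)$ to get $\kfl\vdash{}\Ra\forall\xi\prec\alpha\,A(\xi)$. Instantiating $\eta:=\alpha$ in ${\tt Prog}(A)$ gives $\forall\zeta\prec\alpha\,A(\zeta)\ra A(\alpha)$, and one application of $({\tt L}\ra)$ (modus ponens) then yields $A(\alpha)$, i.e.\ $\forall x({\tt Sent}_{\lt}(\alpha,x)\ra \T\,x\vee\T\,\subdot \neg x)$, as required. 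Equivalently, one may apply ${\tt TI}_{\alpha+1}(A)$ --- available since $\varepsilon_0$ is closed under successor --- and instantiate $\xi:=\alpha$, using that $\alpha\prec\alpha+1$ is provable because $\prec$ is decidable.

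I would expect no deep obstacle, since the substantive work --- the progressiveness of the determinacy statement and the reach of transfinite induction for formulas containing the nonclassical predicate $\T$ --- is already discharged by the preceding lemma and by Theorem \ref{thm:genhyp}. The only point demanding care is the passage from the free-variable sequent of the lemma to the closed progressiveness formula ${\tt Prog}(A)$: one must carry out the universal closure over $x$ and the quantifier shift across the ${\bf HYPE}$-conditional strictly within the rules of ${\bf G1h_{cd}}$, rather than by classical reflex, so that the derivation remains sound over the weaker logic.
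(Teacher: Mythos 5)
Your proposal is correct and follows exactly the route the paper intends: the corollary is stated there with no proof beyond ``By Theorem \ref{thm:genhyp}'', i.e.\ one combines the progressiveness of the determinacy statement from the preceding lemma with ${\tt TI}_{<\varepsilon_0}(\lt^\ra)$ supplied by Theorem \ref{thm:genhyp} applied to the expansion of $\lnat^\ra$ by $\T$. Your extra care in passing from the free-variable sequent of the lemma to ${\tt Prog}(A)$ via an intuitionistically valid quantifier manipulation is a legitimate filling-in of a detail the paper leaves implicit, not a departure from its argument.
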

Since, by Theorem \ref{thm:genhyp}, $\kfl$ proves transfinite induction up to ordinals smaller than $\varepsilon_0$, it follows that we are able to establish the fundamental properties of Tarskian truth predicates up to any ordinal smaller than $\varepsilon_0$.  For $\alpha<\Gamma_0$, ${\bf RT}_{<\alpha}$ refers to the theory of ramified truth predicates up to $\alpha$, as defined in \cite[\S9.1]{hal14}.
\begin{corollary} \label{cor:lowbound}
    $\kfl$ defines the truth predicates of ${\bf RT}_{<\alpha}$, for $\alpha \prec \varepsilon_0$. 
\end{corollary}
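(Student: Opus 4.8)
The plan is to read off the axioms of ${\bf RT}_{<\alpha}$ from the $\kfl$-defined predicates $\T_\beta(x) :\lra {\tt Sent}_{\lt}^{<\beta}(x)\wedge\T(x)$ (for $\beta\prec\alpha$). Concretely, I would fix the standard relative translation $\tau$ of the ramified language into $\lt^\ra$ that is the identity on the arithmetical vocabulary and replaces each atom $\T_\beta t$ by ${\tt Sent}_{\lt}^{<\beta}(t)\wedge\T(t)$, and then verify that $\kfl$ proves $\tau$ applied to each axiom of ${\bf RT}_{<\alpha}$. Following the axiomatisation in \cite[\S9.1]{hal14}, this amounts to three groups of claims: the compositional Tarskian clauses for each level $\beta$; the ramification clause $\T_\beta\corn{\T_\gamma\dot t}\lra\T_\gamma(t)$ for $\gamma\prec\beta$; and the induction schema for the full ramified language.

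The engine of the verification is bivalence of the defined predicates. By the Corollary immediately preceding, $\kfl\vdash\forall x({\tt Sent}_{\lt}(\beta,x)\ra\T x\vee\T\subdot\neg x)$ for every $\beta\prec\varepsilon_0$; since a level-$\beta$ sentence is in particular a $\lt$-sentence, axiom $\kfl$3 gives $\T\subdot\neg x\lra\neg\T x$, and combining the two yields $\T x\vee\neg\T x$ on ${\tt Sent}_{\lt}^{<\beta}$. Thus each $\T_\beta$ is, provably in $\kfl$, a two-valued predicate on its intended domain. With bivalence in hand, the nonclassical biconditionals of $\kfl$ collapse to their classical counterparts: on ${\tt Sent}_{\lt}^{<\beta}$, axioms $\kfl$1, $\kfl$3, $\kfl$4 and $\kfl$5 become exactly the classical Tarskian clauses for $\T_\beta$ (atomic, negation, disjunction, and universal quantification, the last matched using that the ramified and the $\kfl$-style quantifier clauses both range over closed-term values, which coincide with numerals by the classical behaviour of $\lnat$ in $\kfl$). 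This is precisely the point at which the ${\bf HYPE}$ setting, via the determinateness corollary, recovers the classical Cantini--Feferman analysis of $\kf$ \cite{can89,fef91}.

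For the ramification clause I would use $\kfl$2, $\T\corn{\T\dot x}\lra\T x$, together with the arithmetical facts — provable already in ${\bf HYA}$ and hence classical — that the levels nest, ${\tt Sent}_{\lt}(\gamma,\cdot)\subseteq{\tt Sent}_{\lt}(\beta,\cdot)$ for $\gamma\prec\beta$, and are closed under $\subdot\neg,\subdot\vee,\subdot\forall$; this delivers $\T_\beta\corn{\T_\gamma\dot t}\lra\T_\gamma(t)$. The induction axioms of ${\bf RT}_{<\alpha}$ translate into instances of the induction schema that $\kfl$ contains by definition over $\lt^\ra$, and Theorem \ref{thm:genhyp} is what guarantees that the ordinal indices $\beta\prec\alpha$ are genuinely well-founded, so that the recursive definition of ${\tt Sent}_{\lt}(\zeta,\cdot)$ and the derivation of bivalence go through uniformly below $\varepsilon_0$. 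I expect the main obstacle to be bookkeeping rather than conceptual: one must check that $\tau$ commutes with substitution so that the quantifier clauses align, and that the arithmetical coding of ramified sentences agrees provably with the ${\tt Sent}_{\lt}(\zeta,\cdot)$ stratification. Once bivalence is secured these are routine, since the genuinely nonclassical content of $\kfl$ has by then been confined to sentences lying outside the Tarskian hierarchy.
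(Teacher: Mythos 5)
Your proposal is correct and follows essentially the same route the paper takes (and largely leaves implicit): define $\T_\beta$ as ${\tt Sent}_{\lt}^{<\beta}(x)\wedge \T(x)$, use the determinateness corollary together with ($\kfl$3) to recover bivalence on each level, and then read off the ${\bf RT}_{<\alpha}$ axioms from ($\kfl$1)--($\kfl$6) and the induction schema, with Theorem \ref{thm:genhyp} supplying the needed transfinite induction below $\varepsilon_0$. Your elaboration of the bookkeeping (translation commuting with substitution, nesting of levels, the ramification clause via ($\kfl$2)) matches the standard Cantini--Feferman verification the paper defers to.
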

By the proof-theoretic equivalence of systems of ramified truth and ramified analysis established by Feferman \cite{fef64,fef91}, we obtain:
\begin{corollary}
  $\kfl$  proves ${\tt TI}_{<\vphi_{\varepsilon_0}0}(\lnat)$.
 \end{corollary}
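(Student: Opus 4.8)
The plan is to combine the truth-theoretic lower bound of the preceding Corollary \ref{cor:lowbound} with a known proof-theoretic reduction, due to Feferman, connecting ramified truth predicates to ramified analysis. By Corollary \ref{cor:lowbound}, $\kfl$ defines the truth predicates of ${\bf RT}_{<\alpha}$ for every $\alpha\prec\varepsilon_0$, and — crucially — $\kfl$ proves all the defining properties of those predicates (their compositional axioms restricted to the appropriate Tarskian levels, together with the well-foundedness of the indexing, which is available because $\kfl$ proves ${\tt TI}_{<\varepsilon_0}(\mc{L}^+)$ by Theorem \ref{thm:genhyp}). Hence I would first record that $\kfl$ \emph{interprets}, or at least proves all theorems of, the union $\bigcup_{\alpha\prec\varepsilon_0}{\bf RT}_{<\alpha} = {\bf RT}_{<\varepsilon_0}$, keeping the arithmetical vocabulary fixed.

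Next I would invoke the proof-theoretic equivalence of ramified truth and ramified analysis established by Feferman in \cite{fef64,fef91}, which is cited explicitly in the statement. That equivalence yields, for each level bound, a matching of the two hierarchies; at the level $\varepsilon_0$ the relevant ramified-analysis system has proof-theoretic ordinal $\vphi_{\varepsilon_0}0$. Since the proof-theoretic ordinal of a theory is, by definition, the supremum of the ordinals $\alpha$ for which the theory proves ${\tt TI}_{<\alpha}(\lnat)$ (equivalently, transfinite induction along the arithmetical fragment), the equivalence transfers the capacity to prove transfinite induction from the analysis side to the truth side. Concretely, the $\Pi^0_1$-consequences of ${\bf RT}_{<\varepsilon_0}$ include ${\tt TI}_{<\vphi_{\varepsilon_0}0}(\lnat)$, and these arithmetical statements are therefore provable in $\kfl$ as well.

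Putting the two steps together: every instance ${\tt TI}_\beta(\lnat)$ with $\beta\prec\vphi_{\varepsilon_0}0$ is provable in ${\bf RT}_{<\varepsilon_0}$ by Feferman's analysis, and since $\kfl$ proves all the ramified-truth theorems needed to carry out that derivation while fixing the arithmetical language, the instance ${\tt TI}_\beta(\lnat)$ is provable in $\kfl$. As $\beta$ ranges over all ordinals below $\vphi_{\varepsilon_0}0$, we conclude $\kfl\vdash {\tt TI}_{<\vphi_{\varepsilon_0}0}(\lnat)$, as claimed.

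I expect the main obstacle to be bookkeeping rather than conceptual: one must check that the \emph{derivations} inside the ramified system — and not merely the models — are reproducible in $\kfl$, i.e.\ that the nonclassical logic of ${\bf HYPE}$ does not obstruct the Tarskian compositional reasoning at each level. Here the recapture machinery is decisive. The Tarskian truth predicates $\T_\alpha$ are \emph{determinate} on their intended domains (each $\T_\alpha$ sentence is provably true-or-false, by the Corollary preceding Corollary \ref{cor:lowbound}), so Lemma \ref{lem:rechyp} guarantees that $\neg$, $\supset$ and $\ra$ behave classically on exactly those formulas, and Lemma \ref{lem:classid} then allows the ramified induction to be run in rule form. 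The delicate point is to confirm that the arithmetical target ${\tt TI}_\beta(\lnat)$, being built from $\ra$-free formulas of $\lnat$, falls within the classical fragment throughout, so that the final transfer is legitimate; this is secured by Corollary \ref{corol1} together with the derivability of $\Ra A,\neg A$ for $A\in\lnat$.
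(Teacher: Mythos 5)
Your proposal matches the paper's own (very brief) argument: the paper derives this corollary directly from Corollary \ref{cor:lowbound} by citing Feferman's proof-theoretic equivalence of ramified truth and ramified analysis, exactly as you do, and your additional remarks on recapture correctly fill in why the ramified derivations survive in the nonclassical setting. One tiny slip: the instances ${\tt TI}_\beta(\lnat)$ are arithmetical but not $\Pi^0_1$ in general, though this does not affect the argument.
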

Feferman and Cantini established that $\kf$ is proof-theoretically equivalent to ${\bf RT}_{<\varepsilon_0}$. Our results so far then establish that $\kfl$ is proof-theoretically at least as strong as $\kf$. In the next section, we will show that $\kfl$ and $\kf$ are in fact proof-theoretically equivalent.

\subsection{Proof Theory: Upper Bound}

We interpret $\kfl$ in the Kripke-Feferman system $\kf$. For definiteness, we consider the version of $\kf$ formulated in a language $\mc{L}_{{\mbb{T,F}}}$ featuring truth ($\mbb{T}$) and falsity ($\mbb{F}$) predicates. Such a version of $\kf$ is basically the one presented in \cite[\S2]{can89}, but without the consistency axiom that rules out truth-value gluts. 

In order to interpret $\kfl$ into $\kf$, we consider a two-layered translation that differentiates between the external and internal structures of $\lt^\ra$-formulas. Essentially, the external translation fully commutes with negation, and translates the ${\bf HYPE}$ conditional as classical material implication. The internal translation treats negated truth ascriptions as falsity ascriptions, and is defined by an induction on the positive complexity of formulas that adheres to the semantic clauses of ${\bf FDE}$-style fixed-point models. The internal translation therefore translates truth and non-truth of $\kfl$ as truth and falsity of $\kf$, respectively. Since we want to uniformly translate formulas and their codes inside the truth predicate, we  essentially employ the recursion theorem, as described for instance by \cite[\S 5.3]{hal14}.
\begin{definition}\label{def:trlkf}
We define the translations $\tau \colon\lt \longrightarrow \mc{L}_{{\mbb{T,F}}}$, and $\sigma \colon \lt^\ra\longrightarrow \mc{L}_{{\mbb{T,F}}}$ as follows: 
\begin{enumeratei}
    \item 
\begin{align*}
    &( s = t)^\tau= s = t & & ( s \neq t)^\tau= s \neq t \\
    &(\T \,t)^\tau={\mbb T}\tau(t) &&(\neg \T\, t)^\tau= {\mbb F}\tau(t)\\
    &(\neg\neg \vphi)^\tau=(\vphi)^\tau\\
    &(\vphi\vee \psi)^\tau=(\vphi)^\tau\vee (\psi)^\tau&&(\neg(\vphi\vee\psi))^\tau=(\neg \vphi)^\tau\land (\neg \psi)^\tau\\
    &(\forall x\vphi)^\tau=\forall x \vphi^\tau&&(\neg\forall x\vphi)^\tau=\exists x (\neg \vphi)^\tau 
\end{align*}

\item 
\begin{align*}
    &(s = t)^\sigma = s = t \\
    &(\T \,t)^\sigma={\mbb T}\tau(t) && (\neg \T \,t)^\sigma={\mbb F}\tau(t)\\
    &(\neg\vphi)^\sigma=\neg \vphi^\sigma\\
    &(\vphi \vee \psi)^\sigma=(\vphi)^\sigma \vee (\psi)^\sigma&& (\forall x \varphi)^\sigma = \forall x \varphi^\sigma\\
    &(\vphi\ra \psi)^\sigma= \neg (\vphi)^\sigma \vee (\psi)^\sigma
\end{align*}

\end{enumeratei}

\end{definition}

$\kfl$-proofs can then be turned, by the translation $\sigma$, into $\kf$-proofs, as the next proposition shows. 
\begin{proposition}\label{prop:ubkfl}
    If $\kfl\vdash \Gamma \Ra \Delta$, then $\kf \vdash (\bigwedge\Gamma \ra \bigvee \Delta)^\sigma$.
\end{proposition}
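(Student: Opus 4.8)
The plan is to argue by induction on the height of the $\kfl$-derivation of $\Gamma \Ra \Delta$, showing at each step that $\kf$ proves $(\bigwedge \Gamma \ra \bigvee \Delta)^\sigma$ once it proves the $\sigma$-images of the premiss sequents. Since $\sigma$ sends $\ra$ to material implication and commutes with $\wedge,\vee,\forall$, the translated sequent unfolds into the implication $\bigwedge_{\gamma\in\Gamma}\gamma^\sigma \to \bigvee_{\delta\in\Delta}\delta^\sigma$ between formula-translations, so it suffices to treat, case by case, each initial sequent and each rule of the calculus. I would organise the cases into four blocks: (a) the purely logical rules of ${\bf G1h_{cd}}$; (b) the equality sequents {\tt Ref}, {\tt Rep}; (c) the arithmetical axioms and induction; (d) the truth initial sequents $\kfl 1$--$\kfl 6$.

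The engine of the argument is a preliminary coherence lemma, proved by an external induction on the positive complexity of $\vphi$: for every conditional-free $\vphi\in\lt$, $\kf\vdash \vphi^\sigma \lra \mbb{T}\,\corn{\tau(\vphi)}$ and, dually, $\kf\vdash (\neg\vphi)^\sigma \lra \mbb{F}\,\corn{\tau(\vphi)}$. Here $\tau$ is internalised as a primitive recursive function on codes via the recursion theorem, so that $\mbb{T}\,\tau(t)$ and $\mbb{F}\,\tau(t)$ are the targets of the truth atoms; the inductive clauses for $\vee,\neg,\forall$ are discharged using the compositional axioms of $\kf$ for $\mbb{T}$ and $\mbb{F}$ (in particular $\mbb{F}\corn{\neg\psi}\lra\mbb{T}\corn{\psi}$, $\mbb{F}\corn{\psi\vee\chi}\lra\mbb{F}\corn{\psi}\wedge\mbb{F}\corn{\chi}$, and the iteration clause for $\mbb{T}\corn{\mbb{T}\dot t}$, which is where the would-be double negations on truth atoms are absorbed). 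This lemma is precisely the hinge that glues the external layer $\sigma$ to the internal layer $\tau$, reducing the $\sigma$-translation of a conditional-free sequent to a statement about $\kf$'s internal truth predicate.

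With the lemma in place the cases of block (a) divide sharply. The structural rules, the $\vee$-rules, the $\forall$-rules and, crucially, the conditional rules ${\tt L}\!\ra$ and ${\tt R}\!\ra$ are discharged by the ambient classical logic of $\kf$, since under $\sigma$ the ${\bf HYPE}$ conditional is read as material implication and the quantifier rules translate into their classical counterparts (the eigenvariable condition being preserved because $\sigma$ does not touch free variables). Block (b) is immediate, as identity is already classical in ${\bf G1h_{cd}^=}$ (Lemma \ref{lem:eqfde}) and $\sigma$ fixes identity atoms; block (c) goes through because $\kf$ extends ${\bf PA}$ with induction over all of $\mc{L}_{\mbb{T},\mbb{F}}$, so that the $\sigma$-image of a formula of $\lt^\ra$ is again an $\mc{L}_{\mbb{T},\mbb{F}}$-formula and each translated instance of ${\tt IND}^\ra(\lt^\ra)$ is itself an available induction instance.

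The genuinely delicate point --- and the step I expect to be the main obstacle --- is the soundness of the two ${\bf HYPE}$ negation rules {\tt ConCp} and {\tt ClCp}. These cannot be validated by classical contraposition, because the ${\bf HYPE}$/Routley negation does not coincide with the classical negation of $\kf$: after translation, transposing a block of negated formulas across the sequent arrow amounts to exchanging $\mbb{T}\,\corn{\tau(\cdot)}$ with $\mbb{F}\,\corn{\tau(\cdot)}$, and this exchange must be licensed not by logic alone but by the coherence lemma together with the fact that $\kf$'s internal truth predicate validates the ${\bf FDE}$-laws for negation. Making this exchange go through uniformly for arbitrary multiset contexts --- and checking that the coherence lemma commutes with the substitution of closed terms, which is exactly what the quantifier sequent $\kfl 5$ requires and where Feferman's dot-notation and the recursion-theoretic handling of $\tau$ must be carefully controlled --- is the real content of the proof; the remaining truth sequents $\kfl 1$--$\kfl 4$ and $\kfl 6$ should then fall out as direct images of the corresponding compositional axioms of $\kf$.
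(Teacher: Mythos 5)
Your overall scaffolding --- induction on the height of the $\kfl$-derivation, with the logical, equality, arithmetical and truth-axiom cases treated in separate blocks --- is the same as the paper's. But the lemma you place at the centre of the argument is false, and the case the paper actually flags as nontrivial is not the one you isolate. Your ``coherence lemma'' asserts $\kf\vdash \vphi^\sigma \lra \mbb{T}\,\corn{\tau(\vphi)}$ and $\kf\vdash(\neg\vphi)^\sigma\lra\mbb{F}\,\corn{\tau(\vphi)}$ for every conditional-free $\vphi\in\lt$. Take $\vphi=\neg(\T t\vee\neg\T t)$ with $t$ a name of the liar sentence. Since $\sigma$ commutes classically with $\neg$ everywhere except at truth atoms, $\vphi^\sigma=\neg(\mbb{T}\tau(t)\vee\mbb{F}\tau(t))$; but $\tau(\vphi)$ is (the code of) $\mbb{F}\tau(t)\wedge\mbb{T}\tau(t)$, so the compositional and iteration axioms of $\kf$ give $\mbb{T}\,\corn{\tau(\vphi)}\equiv\mbb{T}\tau(t)\wedge\mbb{F}\tau(t)$. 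Your lemma therefore claims that $\kf$ proves ``neither true nor false iff both true and false'' for the $\tau$-image of the liar, which fails in the minimal fixed-point model; the version of $\kf$ used here has neither a consistency nor a completeness axiom to close the gap. The discrepancy is structural: $\sigma$ reads $\neg$ as classical negation outside the truth predicate while $\tau$ pushes $\neg$ through the ${\bf FDE}$ falsity clauses, and the two agree only on determinate formulas. Since this lemma is also what you lean on to discharge {\tt ConCp} and {\tt ClCp}, the case you yourself call the main obstacle is left without a working argument.

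What the paper actually isolates as the one non-immediate ingredient is the purely internal fact $\kf\vdash\sentkf(x)\Ra\mbb{T}\tau(\subdot\neg x)\equiv\mbb{F}\tau(x)$, proved by \emph{formal} induction on the complexity of $x$ inside $\kf$. This is not a bridge between $\sigma$ and $\tau$ across the quotation boundary but a commutation of $\tau$ with negation-as-falsity entirely inside $\kf$'s truth predicate, and it is precisely the $\sigma$-translation of axiom ($\kfl$3). An external induction over standard $\vphi$, as you propose, could not deliver this in any event, because ($\kfl$3) quantifies over all codes and so forces the induction to be carried out formally. Your instinct that the contraposition rules deserve scrutiny is fair --- they are not locally validated by classical contraposition once $(\neg\T t)^\sigma=\mbb{F}\tau(t)$ diverges from $\neg\mbb{T}\tau(t)$ --- but handling them requires a device such as a simultaneous induction on a sequent together with its $\sigma$-translated contrapositive, not an appeal to the coherence lemma above.
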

\begin{proof}
    The proof is by induction on the height of the derivation in $\kfl$ and follows almost directly from the definition of the translation $\sigma$. Only the case of  ({\bf KFL}3) is slightly more involved: we require that (with $\equiv$ expressing material equivalence):
    \begin{equation}\label{eq:trakfl}
        \kf\vdash {\tt Sent}_{\lt} \Ra {\mbb T}\tau(\subdot \neg x) \equiv {\mbb F}\tau(x).
    \end{equation}
    However, this can be proved by formal induction on the complexity of $x$.
\end{proof}

The combination of Proposition \ref{prop:ubkfl} and Corollary \ref{cor:lowbound} yields that $\kf$ and $\kfl$ have the same arithmetical theorems, and in particular they have the same  proof-theoretic ordinal -- cf.~\cite[\S 6.7]{poh09}.
\begin{corollary}
    $|\kfl|=|\kf|=\vphi_{\varepsilon_0}0$.
\end{corollary}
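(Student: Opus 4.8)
The plan is to establish the final corollary, $|\kfl|=|\kf|=\vphi_{\varepsilon_0}0$, by combining the two bounds obtained in the preceding subsections. The strategy is essentially to observe that two theories sharing the same arithmetical theorems have the same proof-theoretic ordinal, and then to verify that both inclusions among the arithmetical consequences have already been secured. The value $\vphi_{\varepsilon_0}0$ itself is the known proof-theoretic ordinal of $\kf$, established by Feferman and Cantini via the equivalence of $\kf$ with ${\bf RT}_{<\varepsilon_0}$, so the genuine content of the corollary is the equality $|\kfl|=|\kf|$.

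First I would fix the relevant notion: the proof-theoretic ordinal $|S|$ of a theory $S$ extending arithmetic is determined by its $\Pi^0_2$-consequences, equivalently by the supremum of the ordinals $\alpha$ such that $S$ proves transfinite induction ${\tt TI}_\alpha(\lnat)$ along a suitable notation system (cf.~\cite[\S 6.7]{poh09}). Hence it suffices to show that $\kfl$ and $\kf$ prove the same arithmetical sentences, since arithmetical theorems (and in particular the provable instances of transfinite induction in $\lnat$) determine the ordinal. I would then argue the two inclusions separately. For $|\kfl|\geq|\kf|$: the lower-bound subsection, culminating in Corollary \ref{cor:lowbound}, shows that $\kfl$ defines the truth predicates of ${\bf RT}_{<\alpha}$ for every $\alpha\prec\varepsilon_0$, and hence proves ${\tt TI}_{<\vphi_{\varepsilon_0}0}(\lnat)$; since $\kf$ is proof-theoretically equivalent to ${\bf RT}_{<\varepsilon_0}$, this gives that $\kfl$ is at least as strong as $\kf$ on the arithmetical fragment.

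For the converse inclusion $|\kfl|\leq|\kf|$, I would invoke Proposition \ref{prop:ubkfl}: if $\kfl\vdash\Gamma\Ra\Delta$, then $\kf\vdash(\bigwedge\Gamma\ra\bigvee\Delta)^\sigma$. The key point is that the translation $\sigma$ leaves the arithmetical vocabulary fixed — it acts nontrivially only on the truth predicate and the conditional — so for a purely arithmetical sentence $A\in\lnat$ one has $A^\sigma$ classically equivalent to (indeed syntactically close to) $A$ itself. Consequently any arithmetical theorem of $\kfl$ is already a theorem of $\kf$, which yields $|\kfl|\leq|\kf|$. Combining the two inclusions gives $|\kfl|=|\kf|$, and the numerical value $\vphi_{\varepsilon_0}0$ follows from the Feferman--Cantini analysis of $\kf$.

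The main obstacle — though it is a bookkeeping rather than a conceptual one — is to make precise that $\sigma$ does not disturb the arithmetical fragment, so that the upper bound transfers cleanly to the proof-theoretic ordinal rather than merely to provable consequences in the extended language. One must check that for $A\in\lnat$ the sentence $A^\sigma$ is $\kf$-provably equivalent to $A$ (this is immediate from the clauses of Definition \ref{def:trlkf}, since $\sigma$ is the identity on identity statements and commutes with the $\ra$-free connectives), and correspondingly that the transfinite-induction statements witnessing the lower bound are genuinely arithmetical. Once these two observations are in place, the equality of the $\Pi^0_2$-fragments, and hence of the proof-theoretic ordinals, follows, completing the proof.
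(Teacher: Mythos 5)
Your proposal matches the paper's own argument: the corollary is obtained precisely by combining the lower bound (Corollary \ref{cor:lowbound}, giving the ramified truth predicates and hence ${\tt TI}_{<\vphi_{\varepsilon_0}0}(\lnat)$) with the upper bound (Proposition \ref{prop:ubkfl}, the $\sigma$-interpretation into $\kf$, which fixes the arithmetical vocabulary), so that the two theories share their arithmetical theorems and therefore their proof-theoretic ordinal, the value $\vphi_{\varepsilon_0}0$ coming from the Feferman--Cantini analysis of $\kf$. Your additional care in checking that $\sigma$ is essentially the identity on $\lnat$ is exactly the bookkeeping the paper leaves implicit, and is correct.
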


In the next section we extend our results to schematic extensions of $\kfl$ and $\kf$.

%Schematic
\section{Schematic extension}

\subsection{$\kfls$: Rules and Semantics.} In this section we study the schematic extension of $\kfl$ in the sense of \cite{fef91}. This is obtained by extending $\kfl$ with a special substitution rule that enables us to uniformly replace the distinguished predicate $P$ in arithmetical theorems $A(P)$ of our extended theory for arbitrary formulas of $\lt^\ra$. More precisely, following Feferman, we will employ a schematic language $\lt^\ra (P)$ (and sub-languages thereof) featuring a fresh schematic predicate symbol $P$, which is assumed to behave classically. 
\begin{definition}
    The system $\kfls$ in $\lt^\ra (P)$ extends $\kfl$ with 
    \begin{enumeratei}
        \item $\forall x (P (x) \vee \neg P(x))$;
        \item Disquotational axiom for $P$:
    \begin{align}
 \tag{$\kfl$P}   &\Ra\T (\ulc P  \dot{x} \urc) \lra P(x);
\end{align}
% \begin{align}
%  \tag{$\kfl$P1}   \T (\ulc P  \dot{x} \urc) \Ra P(x)  \\
%  \tag{$\kfl$P2}    P(x) \Ra \T (\ulc P  \dot{x} \urc) 
% \end{align}
    \item The substitution rule:
\begin{center}
\AXC{$\Ra \forall x ( B(x) \vee \neg B(x))$}
\AXC{$\Gamma(P) \Ra \Delta(P)$}
\RightLabel{for $B$ in $ \lt^\ra (P); \Gamma, \Delta \subseteq \lnat^\ra (P)$.}
\BIC{$\Gamma (B/P) \Ra \Delta (B/P)$}
\DP
\end{center}
    \end{enumeratei}
\end{definition}
 The properties of $\kfl$ expressed by Lemma \ref{lemm:disqschem} transfer directly to $\kfls$, and are proved in an analogous fashion.

The semantics given in \S\ref{sect:sem} can be modified to provide a class of fixed-point models for $\kfls$. We call $\Phi_X$ the result of relativizing the operator from \S\ref{sect:sem} to an arbitrary $X\subseteq \omega$.\footnote{Feferman \cite{fef91} provides a relativized fixed-point construction to arbitrary subsets of natural numbers and establishes the soundness of ${\kf}^*$.} In particular, this means supplementing the positive inductive definition associated with $\Phi$ with the clause:
\begin{quotation}
  a sentence $P z$, with $z$ a closed term of $\lt$, is in the extension of the truth predicate (relativized to $X$) iff ${\tt val}(z)\in X$.
\end{quotation}
This modification clearly does not compromise the monotonicity of the operator. 
Therefore, let ${\tt MIN}_{\Phi_X}$ be the minimal fixed point of $\Phi_X$, and ${\tt MAX}_{\Phi_X}$ its maximal one. For any $X$, we then obtain the minimal ${\bf HYPE}$ model \[
\mf{M}^{\tt min}_{\Phi_X}:=(\{{\tt MIN}_{\Phi_X},{\tt MAX}_{\Phi_X}\},\subseteq, *)
\]
Again in $\mf{M}^{\tt min}_{\Phi_X}$ all arithmetical vocabulary is interpreted standardly at its two states (fixed-points). Only the interpretation of the truth predicate varies. Our notation reflects this. 

\begin{proposition}
    If $\kfls\vdash \Gamma \Ra \Delta$, then for all $X$, $\mf{M}^{\tt min}_{\Phi_X}\Vdash \Gamma \Ra \Delta$.
\end{proposition}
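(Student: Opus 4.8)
The plan is to proceed by induction on the height of the derivation in $\kfls$, exactly mirroring the soundness proof for $\kfl$ with respect to $\mathfrak{M}_\Phi$, but now carried out uniformly over all parameters $X \subseteq \omega$ in the relativized models $\mf{M}^{\tt min}_{\Phi_X}$. The base cases are the logical initial sequents of ${\bf G1h_{cd}^=}$, the arithmetical axioms of ${\bf HYA}^-$ with induction, and the six truth initial sequents $(\kfl 1)$--$(\kfl 6)$; all of these are already handled by the soundness of $\kfl$ (the preceding Lemma), and since the interpretation of the arithmetical vocabulary is standard at both states of $\mf{M}^{\tt min}_{\Phi_X}$ and the truth predicate is interpreted by a genuine fixed point of $\Phi_X$, the verification goes through unchanged for each fixed $X$. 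The logical rules and the {\tt Cut} rule are preserved by forcing in any ${\bf HYPE}$ model by the completeness/soundness of ${\bf G1h_{cd}}$ with respect to Routley semantics (the Completeness Proposition). So the genuinely new work concerns only the three ingredients specific to $\kfls$: the classicality axiom $\forall x(P(x)\vee \neg P(x))$, the disquotation sequent $(\kfl\mathrm{P})$, and the substitution rule.

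First I would dispatch the two new initial sequents. For $\forall x(P(x)\vee\neg P(x))$, note that in $\mf{M}^{\tt min}_{\Phi_X}$ the predicate $P$ is interpreted via the classical condition ${\tt val}(z)\in X$, so at each state $w$ and each assignment $\sigma$ either $\sigma(x)\in X$ or $\sigma(x)\notin X$; using the clause for $\neg$ (which refers to $w^*$) together with the fact that membership in $X$ does not vary between the two fixed-point states, one checks that $P(x)\vee\neg P(x)$ is forced everywhere. For $(\kfl\mathrm{P})$, the supplementary clause in the definition of $\Phi_X$ says precisely that $Pz$ enters the extension of the truth predicate iff ${\tt val}(z)\in X$, so at a fixed point $\T^{S}$ one has $Pz \in S$ iff $P(z)$ holds; this yields $\mf{M}^{\tt min}_{\Phi_X}\Vdash \T(\ulc P\dot{x}\urc)\lra P(x)$ directly.

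The main obstacle, and the only step requiring care, is the \emph{substitution rule}. Here the argument is not a single-model verification but a quantification over \emph{all} $X$: the induction hypothesis gives that for every $Y$, $\mf{M}^{\tt min}_{\Phi_Y}\Vdash \Gamma(P)\Ra\Delta(P)$, and the left premiss gives that $B(x)\vee\neg B(x)$ is forced, i.e.\ $B$ defines (at each fixed point) a genuine classical subset $X_B := \{n \in \omega \mid \mf{M}^{\tt min}_{\Phi_Y}, S, \sigma[x/n]\Vdash B\}$ of $\omega$. The key lemma to prove is a \emph{semantic substitution} fact: for any formula $C \in \lnat^\ra(P)$,
\[
   \mf{M}^{\tt min}_{\Phi_Y}\Vdash C(B/P) \quad\text{iff}\quad \mf{M}^{\tt min}_{\Phi_{X_B}}\Vdash C,
\]
established by induction on $C$, using the classicality of $B$ to match the relativization clause for $P$ in $\Phi_{X_B}$ against the forcing value of $B$. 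Instantiating the induction hypothesis at $Y' := X_B$ then transfers $\Gamma(P)\Ra\Delta(P)$ into $\Gamma(B/P)\Ra\Delta(B/P)$ at the model $\mf{M}^{\tt min}_{\Phi_Y}$. The delicate point is that $X_B$ may itself depend on the truth predicate interpretation, so one must check that the two fixed-point states ${\tt MIN}$ and ${\tt MAX}$ assign $B$ the same extension (which follows from the provable classicality $B(x)\vee\neg B(x)$ and the recapture Lemma \ref{lem:rechyp}), ensuring $X_B$ is well-defined independently of the state and that the monotone operator $\Phi_{X_B}$ is the intended one. Once this correspondence is in place, the substitution step closes, and since $Y$ was arbitrary the conclusion holds for all $X$, completing the induction.
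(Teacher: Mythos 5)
Your proposal is correct and follows essentially the same route as the paper's proof: induction on the height of the derivation, with the only nontrivial case being the substitution rule, which is handled by Feferman's trick of reinterpreting the schematic predicate $P$ by the extension of $B$ in the model (your $X_B$). The paper is terser --- it passes through classical satisfaction $(\mathbb{N},Y)\vDash \Gamma(P)\Ra\Delta(P)$ for the arithmetical sequent rather than stating your semantic substitution lemma explicitly --- but the underlying argument is the same, and your additional checks (the classicality axiom for $P$, the sequent $(\kfl\mathrm{P})$, and the state-independence of the extension of $B$ in $\mf{M}^{\tt min}_{\Phi_X}$) are precisely the details the paper leaves implicit.
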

\begin{proof}
 By induction on the length of the derivation in $\kfls$. 
 %we show that if $\kfls\vdash \vphi (x)$, then forall $X$, for all $n \in \omega\; \mf{M}^{\tt min}_{\Phi_X}\Vdash  \vphi(\overline{n})$.
 
 We consider the case of the substitution rule applied to an arithmetical sequent $\Gamma(P) \Ra \Delta(P)$. That is, our proof ends with 
 \[
 \AXC{$\Ra \forall x ( B(x) \vee \neg B(x))$}
\AXC{$\Gamma(P) \Ra \Delta(P)$}
\BIC{$\Gamma (B/P) \Ra \Delta (B/P)$}
\DP
 \]
 with $B(x)$ an arbitrary formula of $\lt^\ra$. 
 
 By induction hypothesis, for all $X$, $\mf{M}^{\tt min}_{\Phi_X}\Vdash  \Gamma(P)\Ra \Delta(P)$. Since $\Gamma(P) \Ra \Delta(P)$ is arithmetical, for all interpretations $Y$ of $P$, $(\mathbb{N}, Y) \vDash \Gamma (P) \Ra \Delta(P)$. Then, following \cite{fef91}, we can let $Y$ to be
 \[
    \{n\sth \mf{M}^{\tt min}_{\Phi_X}\Vdash \Gamma(B({n})/P)\Ra \Delta(B({n})/P)\}
 \]
to obtain that:
\[
    \mf{M}^{\tt min}_{\Phi_X}\Vdash \Gamma(B/P)\Ra \Delta(B/P).
\]
\end{proof}
 
\subsection{Proof-theoretic analysis}
We first consider the proof-theoretic lower-bound for $\kfls$. We adapt to the present setting the strategy outlined in \cite[p.~84]{fest00}. In particular, Feferman and Strahm formalize the notion of $A$-jump hierarchy, which is a hierarchy of sets of natural numbers obtained by iterating an arithmetical operator expressed by an arithmetical formula $A(X,\theta,y)$. An $A$-jump hierarchy is relativized when the starting point is a specific set of natural numbers expressed by some predicate $P$. The notion of $A$-jump hierarchy is quite general, and has as special cases familiar hierarchies such as the Turing-jump hierarchy. 

For our purposes, it's useful to consider $A$-jump hierachies in which membership in second-order parameters is replaced by the notion of satisfaction.  In order to achieve this, we employ Feferman's strategy in \cite{fef91} in which the stages of the Turing jump-hierarchy are represented by means of suitable primitive recursive functions on codes of $\lt$-formulas. Specifically, we encode in suitable primitive recursive functions the stages of a hierarchy in which the formula $A$ is the Veblen-jump formula that will be introduced shortly. %We denote with $A(\T \,x(\dot{u}),\eta,y)$ the result of replacing every occurrence of $u\in X$ with $\T \;{\rm sub}(x,{\rm num}(u))$ in $A(X,\eta,y)$. 

% \begin{align*}
%      \mc{R}[\T f, P,t]:\lra & \forall y (P(0,y)\lra \T f_0(\dot{y})) \land \forall x(0\prec x \prec t \ra \forall y(\T f_x \dot{y})\lra\\
%     &  \forall z({\tt h}(x)\preceq z\prec x \,\mc{J}(\T f_z,\vphi_{{\tt e}(x)}y)))\\
%     &\T f_x\dot{y}\lra \mc{A}({\rm Y}^\mc{A},x,y)
% \end{align*}

We denote with ${A}(\T f^A,\eta,y)$ the result of replacing every occurrence of $(u, v) \in X$ in $A(X,\eta,y)$ with 
\[
    \T \;{\rm sub}(f^A_v,\ulc x\urc, {\rm num}(u)),
\]
where the functions $f^A_x(y)$ are recursively defined as follows:
\begin{align*}
f^A_0(x) := & \; \corn{P \dot{ x}},\\
    f^{A,\zeta}(x) := & \; \corn{\dot{x}_0 \prec \dot{\zeta} \wedge \T f^A_{\dot{x}_0} (\dot{x}_1)},\\
    f^A_\zeta(x):=&\; \corn{A(\T f^{A,\dot{\zeta}},\dot{\zeta},\dot x)}.
   % {\mc Y}^A_a(x):=&\; \T f^A_a(\dot{x}).
\end{align*}
In the clause for  $f^{A,\zeta}$, the input $x$ is intended to be an ordered pair $(x_0,x_1)$. 
As in the definitions of translations $\sigma$ and $\tau$ above, the existence of the function $f^A$ can be obtained by employing the recursion theorem, as it needs to apply to its arithmetical code.

Recall the general pattern of the Gentzen jump formula -- with  $\ra$  the ${\bf HYPE}$ conditional:
\[ \mc{J} (B,\xi) := \forall \eta ( \forall \zeta \prec \eta \,  B( \zeta ) \ra \forall \zeta \prec \eta + \xi \, B (\zeta)).  \]
We build our $\mc{A}$-\emph{hierarchy} on the more complex Veblen-jump formula $\mc{A}$, as stated by Sch\"utte in \cite[p.~185]{sch77}, which is crucial for the proof-theoretic analysis of predicative systems.

One first defines the functions:
\begin{align*}
    & {\tt e}(0)=0 && {\tt h}(0)=0\\
    &{\tt e}(\omega^\eta)=\eta && {\tt h}(\omega^\eta)=0\\
    & {\tt e}(\omega^{\eta_1}+\ldots +\omega^{\eta_n})=\eta_n &&{\tt h}(\omega^{\eta_1}+\ldots +\omega^{\eta_n})=\omega^{\eta_1}+\ldots +\omega^{\eta_{n-1}}
\end{align*}
with $\eta_n\preceq \ldots \preceq \eta_1$.

The Veblen-jump formula $\mc{A}$ is then the following:
\[ 
    \mc{A} (\T f^\mc{A} ,\xi, y) := \forall \zeta ({\tt h}(\xi) \preccurlyeq \zeta \prec \xi \, \mc{J} ( \T f^\mc{A}_\zeta , \vphi_{{\tt e}(\xi)}y)).
\]
It expresses that, given some ordinal $\xi$,  the $\mc{A}$-jump hierarchy in the interval between the ordinal ${\tt h}(\xi)$, and $\xi$ itself is closed under the Gentzen jump relative to $\vphi_{{\tt e}(\xi)}y$ (with $y$ a parameter). In the following we will omit the superscripts specifying the formula, since we will keep $\mc A$ fixed.

An essential ingredient of the lower-bound proof for $\kfls$ is the `disquotational' behaviour of our truth predicate for stages in the hierarchy that are provably well-founded. 
\begin{lemma}\label{lem:distru} If we have ${\tt TI}_\alpha(\lt^\ra)$, then for all $\eta$, with $0 \prec \eta \prec \alpha$
\begin{align*}
    \T\subdot{f}_\eta(n) \leftrightarrow \mc{A}(\T\subdot{f}^\eta,\eta,n)
\end{align*}
\end{lemma}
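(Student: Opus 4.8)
The plan is to prove the biconditional $\T\subdot{f}_\eta(n) \lra \mc{A}(\T\subdot{f}^\eta,\eta,n)$ by unfolding the definition of the function $f_\eta$ and then applying the disquotational behaviour of $\T$ guaranteed by Lemma \ref{lemm:disqschem}(ii) together with the compositional truth axioms of $\kfl$. By the recursive clause defining $f^A_\zeta(x)$, we have that $\subdot{f}_\eta(n)$ is (the code of) the formula $\mc{A}(\T \subdot f^{\eta},\eta,n)$, where the substitution of $\eta$ and $n$ is carried out via the dotted-variable machinery. So, at the level of syntax, $\T\subdot{f}_\eta(n)$ is literally $\T\corn{\mc{A}(\T \subdot f^{\eta},\eta,n)}$, and the content of the lemma is that the truth predicate commutes with the logical structure of $\mc{A}$ down to its atoms.

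First I would make precise how the hypothesis ${\tt TI}_\alpha(\lt^\ra)$ and the bound $0 \prec \eta \prec \alpha$ enter: the formula $\mc{A}$ is built from the Gentzen-jump $\mc J$ applied to $\T \subdot f_\zeta$ for ordinals $\zeta$ in the interval $[{\tt h}(\eta),\eta)$, and hence involves nested quantification over ordinals below $\eta$ together with truth ascriptions of the form $\T \subdot f_\zeta(\cdot)$. Because $\mc A$ is a formula of $\lt^\ra$ containing the conditional, I cannot apply the plain $\T$-schema of Lemma \ref{lemm:disqschem}(ii), which is stated only for $A \in \lt$ (the $\ra$-free fragment). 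The strategy is therefore to peel off the compositional structure of $\mc A$ using the $\kfl$ axioms ($\kfl$3)--($\kfl$5) for $\neg$, $\vee$, $\forall$ (and the derived clauses for $\wedge,\exists,\prec$), reducing the claim to the disquotation of the atomic truth ascriptions $\T \subdot f_\zeta(m)$ occurring inside. The conditional $\ra$ in $\mc J$ should be handled via the recapture properties of Lemma \ref{lem:rechyp}, exploiting that the ordinal-theoretic matrix of $\mc A$ is a classically-behaved (arithmetical) formula once the truth ascriptions are isolated.

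The decisive step is an inner transfinite induction along $\prec$: to disquote the nested occurrences $\T \subdot f_\zeta$ for $\zeta \prec \eta$, I would show that the disquotation property $\T\subdot{f}_\zeta(m) \lra \mc{A}(\T\subdot{f}^\zeta,\zeta,m)$ is progressive in $\zeta$, and then invoke ${\tt TI}_\alpha(\lt^\ra)$ for the specific formula expressing this property. This is exactly where the availability of full transfinite induction up to $\alpha$ over the conditional-containing language $\lt^\ra$ is essential: the induction formula is genuinely a $\lt^\ra$-formula, not an $\ra$-free one, so the hypothesis ${\tt TI}_\alpha(\lt^\ra)$ (rather than transfinite induction merely over $\lnat^\ra$ or $\lt$) is what makes the argument close.

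The main obstacle I anticipate is the bookkeeping forced by the recursion-theoretic definition of $f$: because $\subdot f_\eta(n)$ is a \emph{code} manufactured by the recursion theorem, I must verify that $\kfl$ proves the relevant identities between the code $\subdot f_\eta(n)$ and the syntactic building operations ($\subdot\neg$, $\subdot\vee$, $\subdot\forall$, numerals and substitution functions) applied to the codes $\subdot f_\zeta$, so that the compositional axioms can actually be triggered. Since these identities concern the $\ra$-free arithmetical syntax and identity is classical in $\kfl$ by Lemma \ref{lem:eqfde}, this is in principle routine, but it is the part that demands care; once it is in place, the combination of compositional disquotation, recapture for the conditional, and the inner transfinite induction yields the biconditional.
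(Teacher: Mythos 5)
Your proposal is correct and rests on the same underlying strategy as the paper's proof --- stratify the code $\subdot{f}_\eta(n)$ into Tarskian levels, secure compositionality at each level by transfinite induction, and conclude disquotation --- but you execute it differently. The paper's argument is a short reduction: it shows ${\tt Sent}_\eta(f_\eta(n))$ by transfinite induction on $\eta$, then appeals to the already-established compositionality of the ramified truth predicates $\T_\zeta$ for $\zeta\prec\alpha$ (the machinery behind Corollary \ref{cor:lowbound}), and closes by citing Tarski's observation that full compositionality entails disquotation. You instead redo the induction at the level of the disquotation statement itself, peeling off the axioms ($\kfl$2)--($\kfl$5) by hand and proving that the biconditional is progressive in $\zeta$; this is more explicit but duplicates work the paper reuses. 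Two of your observations are worth emphasizing because they make precise what the paper's terse proof leaves implicit. First, your worry about the ${\bf HYPE}$ conditional occurring inside the coded formula is genuine: since $\T\,x \Ra \sentkf(x)$ and $\sentkf$ ranges over the $\ra$-free language, the codes $f_\zeta(n)$ must be read with the material conditional $\supset$ in place of $\ra$, which is legitimate only once determinateness of the relevant truth ascriptions is secured --- the paper makes this replacement explicit only later, in the proof of Proposition \ref{prop:kflstar}, and your recapture-based handling via Lemma \ref{lem:rechyp} is exactly the right repair. Second, you are right that Lemma \ref{lemm:disqschem}(ii) cannot be invoked directly: besides the conditional issue, it is proved by an external induction on fixed sentences, whereas here disquotation is needed uniformly in the parameter $\eta$ ranging over provably well-founded notations, which is precisely why the hypothesis ${\tt TI}_\alpha(\lt^\ra)$ carries real weight.
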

\begin{proof}
 For all $\eta$ and all $n$, we can show that ${\tt Sent}_\eta (f^\eta((n_0,n_1)))$ and ${\tt Sent}_\eta (f_\eta (n))$ by transfinite induction on $\eta$ making use of the properties of the ramified truth predicates such as, for $\lambda\prec \alpha$ limit:
 \[
    \forall \zeta \prec \lambda \big( \T_\lambda (\T_\zeta t)\lra \T_\zeta{\tt val}(t)\big).
 \]
Such properties just state that Tarskian truth predicates are fully compositional for ordinals for which we have transfinite induction \cite{fef91}, \cite[II.9.1]{hal14}. 

Since all truth predicates in $\T\subdot{f}_\eta(n)$ are provably compositional by ${\tt TI}_\alpha(\lt^\ra)$, the claim is obtained by the fact that full compositionality entails disquotation as shown by \cite{tar35}.
\end{proof}

The disquotational properties allow us to establish some fundamental properties of the $\mc{A}$-jump hierarchy. In particular, we show that the $\mc{A}$-jump hierarchy can be elegantly expressed by truth ascriptions on the functions $f_\alpha$.

\begin{lemma}\label{lem:hiert} If ${\tt TI}_\alpha(\lt^\ra)$ is derivable in $\kfls$ for some $\alpha > 0$, then we can derive in $\kfls$:
\begin{align*}
 \forall y ( P (y) \leftrightarrow \T f_0 (y)) 
 \wedge \forall \zeta [& 0 \prec \zeta \prec \alpha \ra 
 \forall y [ \T f_\zeta (y) \leftrightarrow  \forall z ( {\tt h} (\zeta) \preccurlyeq z \prec \zeta \ra \mc{J}(\T f_z,\vphi_{{\tt e}(\zeta)}y)) ]].
%\forall \eta ( \forall \zeta \prec \eta \T f_z (\zeta) \ra \forall \zeta \prec \eta + \vphi_{e(x)} y \T f_z (\zeta)) ) ]] 
 \end{align*}
\end{lemma}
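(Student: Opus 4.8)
The plan is to split the conjunction into its two parts and to reduce everything to the disquotation already proved in Lemma~\ref{lem:distru} together with the compositional truth axioms of $\kfl$; transfinite induction enters only through the appeal to Lemma~\ref{lem:distru}.

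For the first conjunct $\forall y\,(P(y)\leftrightarrow \T f_0(y))$ I would argue directly. By definition $f_0(y)=\corn{P\dot y}$, so $\T f_0(y)$ is literally $\T\ulc P\dot y\urc$, and the matrix $P(y)\leftrightarrow \T f_0(y)$ is an instance of the disquotational axiom $(\kfl\mathrm P)$, namely $\T(\ulc P\dot x\urc)\lra P(x)$. A single application of $(\texttt{R}\forall)$ then yields the universally quantified conjunct; no appeal to ${\tt TI}$ is needed here.

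For the second conjunct I would fix $\zeta$, assume the guard $0\prec\zeta\prec\alpha$, and instantiate Lemma~\ref{lem:distru} to get $\T f_\zeta(y)\leftrightarrow \mc{A}(\T f^\zeta,\zeta,y)$. Unfolding the Veblen-jump formula gives $\mc{A}(\T f^\zeta,\zeta,y)=\forall z\,({\tt h}(\zeta)\preccurlyeq z\prec\zeta\ra \mc{J}(\T f^\zeta_z,\vphi_{{\tt e}(\zeta)}y))$, so the whole task reduces to identifying the slice $\T f^\zeta_z$ with the direct stage $\T f_z$ throughout the interval ${\tt h}(\zeta)\preccurlyeq z\prec\zeta$. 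By the recursive clause for $f^\zeta$, the term $f^\zeta_z(w)$ is the code of $\dot z\prec\dot\zeta\wedge \T f_{\dot z}(\dot w)$; applying the compositional clause for conjunction (a consequence of $(\kfl 3)$ and $(\kfl 4)$), the local disquotation of the arithmetical conjunct $z\prec\zeta$ (Lemma~\ref{lemm:disqschem}), and the iteration axiom $(\kfl 2)$ for the inner truth ascription, one obtains $\T f^\zeta_z(w)\leftrightarrow (z\prec\zeta\wedge \T f_z(w))$, uniformly in $w$. Since the standing guard forces $z\prec\zeta$, the spurious conjunct is dischargeable and the equivalence collapses to $\T f^\zeta_z(w)\leftrightarrow \T f_z(w)$. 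Substituting this biconditional, via the intersubstitutivity of Lemma~\ref{lem:baseprop}(iv), into both the antecedent and the consequent occurrences of the predicate inside $\mc{J}$ gives $\mc{J}(\T f^\zeta_z,\vphi_{{\tt e}(\zeta)}y)\leftrightarrow \mc{J}(\T f_z,\vphi_{{\tt e}(\zeta)}y)$; reinstating the guards and the quantifiers over $z$ and $y$ then produces exactly the second conjunct.

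The main obstacle is the bookkeeping around the substitution notation together with the nonclassical setting. On the one hand, one must verify carefully that the slice $\T f^\zeta_z$ occurring in the unfolded $\mc{A}$ really is $\T$ of the code $\corn{z\prec\zeta\wedge \T f_z(w)}$, and that the sentence-hood side conditions ${\tt Sent}_{\lt}$ required to fire $(\kfl 3)$, $(\kfl 4)$ and $(\kfl 2)$ are met — precisely the conditions already secured in the proof of Lemma~\ref{lem:distru}. On the other hand, and more seriously, the predicate to be replaced occurs inside $\mc{J}$ both in the consequent and in the antecedent of the ${\bf HYPE}$ conditional $\ra$; classical replacement of equivalents is therefore not available, and the substitution must be licensed by the intersubstitutivity property of ${\bf G1h_{cd}}$ (Lemma~\ref{lem:baseprop}(iv)), applied with the argument variable left free so that it survives under the subsequent $(\texttt{R}\forall)$. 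Checking that ${\tt h}(\zeta)\preccurlyeq z\prec\zeta$ indeed entails $z\prec\zeta$, so that the extra conjunct in the slice can be eliminated, is a minor but necessary point.
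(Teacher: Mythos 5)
Your proposal is correct and follows essentially the same route as the paper's proof: the first conjunct is discharged directly by the disquotational axiom $(\kfl\mathrm{P})$, and the second by applying Lemma~\ref{lem:distru} to unfold $\T f_\zeta(y)$ into $\mc{A}(\T f^\zeta,\zeta,y)$ and then disquoting the slice $\T f^\zeta_z$ down to $\T f_z$ under the guard. Your treatment is in fact somewhat more explicit than the paper's about the elimination of the spurious conjunct $z\prec\zeta$ and about why replacement inside the ${\bf HYPE}$ conditional must be licensed by intersubstitutivity rather than by classical replacement of equivalents.
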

\begin{proof} By our disquotational axioms for $P$, it immediately follows that $\forall y ( P(y) \leftrightarrow  \T f_0 (y)  )$.

Let's assume now that $0 \prec \zeta \prec \alpha$. %We show that 
%\[ \T f_a (y) \leftrightarrow \forall z ( {\tt h} (a) \preceq z \prec a \ra \forall \eta ( \forall \zeta \prec \eta \T f_z (\zeta) \ra \forall \zeta \prec \eta + \vphi_{e(a)} y \T f_z (\zeta)) )).\]
We have 
\[ \T f_\zeta (y) \leftrightarrow \T \corn{ {\mc A} ( \T f^{\dot{\zeta}} , \dot{\zeta} , \dot{y}) }\] 
the right hand side is equivalent by the disquotational property to 
\[ {\mc A} ( \T f^\zeta , \zeta , y). \]
By the definition of $f^\zeta$, the latter formula is in turn equivalent to,
\[ {\mc A} ( \T \corn{\dot{z} \prec \dot{\zeta} \wedge \T f_{\dot{z}} (\dot{x})}, \zeta , y), \]
which is again equivalent to
\begin{equation}\label{eq:dis1}
\forall z ({\tt h}(\zeta) \preccurlyeq z \prec \zeta \,  ( \mc{J}(\T \corn{\dot{z} \prec \dot{\zeta} \wedge \T f_{\dot{z}} (\dot{x})},\vphi_{{\tt e}(\zeta)} y))).
%\forall \theta \prec \eta +  \vphi_{e(a)} y \,  \T \corn{z \prec a \wedge \T f_z (\theta)})). 
\end{equation}
By applying the disquotational property to \eqref{eq:dis1}, we obtain:
\[ 
    \forall z ({\tt h}(\zeta) \preccurlyeq z \prec \zeta \,  \mc{J}( \T f_{z} (x),\vphi_{{\tt e}(\zeta)} y)). 
\]
\end{proof}

We can now show an analogous claim to Sch\"utte's Lemma 9 in  \cite[p.~186]{sch77}, establishing the progressiveness of the stages of the $\mc{A}$-hierarchy:
\begin{lemma}
If ${\tt TI}_\alpha(\lt^\ra)$ is provable in $\kfls$ for $\alpha<\Gamma_0$, then $\kfls$ proves:
\[
    \forall \zeta (0\prec \zeta\prec \alpha \land (\forall \theta\prec \zeta\, {\tt Prog}(\T f_\theta)\ra{\tt Prog}({\T f_\zeta}))).
\]
\end{lemma}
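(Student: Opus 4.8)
The plan is to transcribe Sch\"utte's proof of his Lemma~9 \cite[p.~186]{sch77} into the present framework, using the disquotational rewriting of Lemma~\ref{lem:hiert} (available because ${\tt TI}_\alpha(\lt^\ra)$ is assumed derivable) to trade the truth-predicate stages for the Gentzen-jump formulas they encode. Arguing informally in $\kfls$, I would fix $\zeta$ with $0\prec\zeta\prec\alpha$ and assume the antecedent $\forall\theta\prec\zeta\,{\tt Prog}(\T f_\theta)$; writing $e:={\tt e}(\zeta)$ and $h:={\tt h}(\zeta)$, the Cantor Normal Form clauses defining ${\tt e}$ and ${\tt h}$ give $\zeta=h+\omega^{e}$. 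To establish ${\tt Prog}(\T f_\zeta)$ I unfold the definition of progressiveness: fix an argument $\eta$, assume $\forall\xi\prec\eta\,\T f_\zeta(\xi)$, and aim at $\T f_\zeta(\eta)$.

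The first move is to apply Lemma~\ref{lem:hiert}, which replaces $\T f_\zeta(y)$ by $\forall z\,({\tt h}(\zeta)\preccurlyeq z\prec\zeta\ra\mc{J}(\T f_z,\vphi_{e}y))$. Under this rewriting the hypothesis becomes $\forall\xi\prec\eta\,\forall z\,(h\preccurlyeq z\prec\zeta\ra\mc{J}(\T f_z,\vphi_{e}\xi))$ and the goal becomes $\forall z\,(h\preccurlyeq z\prec\zeta\ra\mc{J}(\T f_z,\vphi_{e}\eta))$. Fixing such a $z$, and noting that $\T f_z$ is itself progressive since $z\prec\zeta$, the task reduces to showing that the predicate $y\mapsto\mc{J}(\T f_z,\vphi_{e}y)$ is progressive, that is, to deriving $\mc{J}(\T f_z,\vphi_{e}\eta)$ from the collection $\{\mc{J}(\T f_z,\vphi_{e}\xi):\xi\prec\eta\}$.

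This last step is carried out by the case analysis driving Sch\"utte's argument, using the provable recursion equations for the Veblen function in $({\tt OT},\prec)$ together with two elementary closure properties of the Gentzen jump: additivity, $\mc{J}(B,\delta_0)\land\mc{J}(B,\delta_1)\ra\mc{J}(B,\delta_0+\delta_1)$, and continuity, $\forall\delta\prec\lambda\,\mc{J}(B,\delta)\ra\mc{J}(B,\lambda)$ for limit $\lambda$. When $e=0$ we have $\vphi_{0}\eta=\omega^{\eta}$, and the progressiveness of $y\mapsto\mc{J}(\T f_z,\omega^{y})$ is precisely the content of the Gentzen-jump Lemma~\ref{lemma1} applied to $B:=\T f_z$. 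When $e\succ0$ one splits on whether $\eta$ is $0$, a successor, or a limit: for limit $\eta$ one appeals to $\vphi_{e}\eta=\sup_{\xi\prec\eta}\vphi_{e}\xi$ and to continuity of the jump; the zero and successor cases invoke the fixed-point clause, according to which $\vphi_{e}(\eta+1)$ is the next common fixed point of the $\vphi_{e'}$ with $e'\prec e$ above $\vphi_{e}\eta$, a point reached by iterating the lower Veblen jumps cofinally and feeding them through additivity.

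The main obstacle I anticipate is exactly the successor case for $e\succ0$. There the Veblen fixed-point property forces a cofinal iteration of jumps by values $\vphi_{e'}(\cdot)$ with $e'\prec e$, and in the hierarchy formulation these lower jumps are mediated by the lower stages $\T f_{z'}$ with $z'\prec\zeta$ --- all of which lie in the interval $[h,\zeta)$ and hence are progressive by the standing hypothesis. Aligning the disquotation equivalences of Lemma~\ref{lem:hiert} at those stages with Sch\"utte's ordinal bookkeeping, so that the nested jumps compose correctly, is the delicate point; once it is in place, the remaining cases are a routine adaptation of the classical predicative lower-bound computation.
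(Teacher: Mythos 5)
Your overall strategy is the right one and matches the paper's: argue informally in $\kfls$, use Lemma~\ref{lem:hiert} to trade $\T f_\zeta(y)$ for $\forall z({\tt h}(\zeta)\preccurlyeq z\prec\zeta\ra\mc{J}(\T f_z,\vphi_{{\tt e}(\zeta)}y))$, and reduce the claim to the progressiveness of $y\mapsto\mc{J}(\T f_z,\vphi_{{\tt e}(\zeta)}y)$ in the style of Sch\"utte's Lemma~9. Your treatment of the case ${\tt e}(\zeta)=0$ via Lemma~\ref{lemma1} is also correct, since $A^+(\theta)$ is exactly $\mc{J}(A,\omega^\theta)$.

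However, there is a genuine gap at precisely the point you flag as delicate, and it is not merely a matter of ``bookkeeping.'' For ${\tt e}(\zeta)\succ 0$ and successor argument, you propose to reach $\vphi_{{\tt e}(\zeta)}(\eta+1)$ by ``iterating the lower Veblen jumps cofinally and feeding them through additivity.'' But a cofinal iteration of length $\vphi_{{\tt e}(\zeta)}(\eta+1)$ is not a proof step available inside $\kfls$: transfinite induction along $\prec$ up to that ordinal is exactly what has not yet been established, so the iteration must be grounded in some induction principle that \emph{is} available. The paper supplies this missing ingredient: induction on the \emph{syntactic complexity} ${\tt l}(\cdot)$ of ordinal codes (Sch\"utte's Theorem~20.10, provable in $\kfls$), applied to the formula
\[
\phi(u):\lra u\prec \vphi_{{\tt e}(\zeta)}\eta\ra \forall z\,({\tt h}(\zeta)\preccurlyeq z \prec \zeta \;\mc{J}(\T f_z,u)).
\]
Every notation below $\vphi_{{\tt e}(\zeta)}(\eta+1)$ is built, by the normal-form clauses, from notations of strictly smaller ${\tt l}$-complexity via sums and applications of $\vphi_{e'}$ with $e'\prec{\tt e}(\zeta)$; the induction on ${\tt l}$ is what lets one exhaust this closure without ever invoking transfinite induction on $\prec$ itself. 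Your additivity and continuity observations for $\mc{J}$ then become the verification of the induction step, but without naming the induction on ${\tt l}$ the argument is circular at its core. Once that principle is inserted, the rest of your outline goes through essentially as in the paper.
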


\begin{proof}
    Let ${\tt l}(\cdot)$ be the function that keeps track of the syntactic complexity of an ordinal code. One first shows that the following claims
    \begin{align}
      & \zeta\prec \alpha \\
      &\forall x \prec \zeta  \,{\tt Prog} (\T f_x)\\
      &\forall y\prec \eta \,\T f_\zeta (y)\\
      & \forall y({\tt l}(y)<{\tt l}(\theta)\ra (y \prec \vphi_{{\tt e}(\zeta)} \eta \ra ( \forall z ( {\tt h} (\zeta ) \preccurlyeq z \prec \zeta \rightarrow \mc{J} ( \T f_z, y)))) \\
      & \theta \prec \vphi_{{\tt e} (\zeta)}\eta\\
      &{\tt h}(\zeta)\preceq \xi\prec \zeta
     \end{align}
     entail $\mc{J}(\T f_\xi,\theta)$. 
     
     First we apply the principle of induction on the syntactic composition of ordinal codes \cite[Thm. 20.10, p.~173]{sch77} (provable in $\kfls$):
     \begin{equation}
        \forall x(\forall y({\tt l}(y)<{\tt l}(x)\ra \phi(y))\ra\phi(x))\ra \phi(t)
     \end{equation}
      to the formula.
     \[
        \phi(u):\lra u\prec \vphi_{{\tt e}(\zeta)}\eta\ra \forall z( {\tt h}(\zeta)\preccurlyeq z \prec \zeta \;\mc{J}(\T f_z,u)),
     \]
     Since we can prove that:
     \[
        \forall x\prec \zeta  \,{\tt Prog}(\T f_x)\ra \big( \forall y\prec \theta \, {\tt l}(y)<{\tt l}(\theta)) \ra (y\prec \vphi_{{\tt e}(\zeta)}\theta \ra \forall z({\tt h}(\zeta)\preccurlyeq z \prec \zeta \;\mc{J}(\T f_z,y)),
     \]
     we obtain by the above-mentioned induction principle:
     \begin{equation}
        \forall x\prec \zeta  \,{\tt Prog}(\T f_x)\ra ( \forall y\prec \eta \,\T f_\zeta(y)\ra ( \theta\prec \vphi_{{\tt e}(\zeta)}\eta\ra \forall z({\tt h}(\zeta)\preccurlyeq z \prec \zeta \;\mc{J}(\T f_z,\theta)) )).
     \end{equation}
     By the definition of progressiveness we obtain:
     \begin{equation}
     {\tt Prog}(\T f_\xi) \ra (\forall x \prec \vphi_{{\tt e}(\zeta)}\eta \,\mc{J} (\T f_\xi, x) \ra \mc{J} ( \T f_\xi,\vphi_{{\tt e}(\zeta)} \eta)).
     \end{equation}
     Therefore, by combining the previous two claims, we obtain:
     \begin{equation}\label{eq:preq1}
         \forall x\prec \zeta \,{\tt Prog}(\T f_x)\ra \big(\forall y\prec \eta \,\T f_\zeta(y)\ra\forall z({\tt h}(\zeta)\preccurlyeq z \prec \zeta \;\mc{J} (\T f_z,\vphi_{{\tt e}(\zeta)}\eta))\big).
     \end{equation}
        Finally, by Lemma \ref{lem:hiert} applied to  \eqref{eq:preq1}, we can conclude that
       \begin{equation}
         \forall x\prec \zeta \,{\tt Prog}(\T f_x)\ra \big(\forall y\prec \eta \,\T f_\zeta(y)\ra \T f_\zeta(\eta))\big).
     \end{equation}

\end{proof}

Let's characterize the fundamental series of ordinals $<\Gamma_0$ as functions of natural numbers in the standard way: $\gamma_0:=\omega$, and $\gamma_{n+1}:= \vphi_{\gamma_n}0$. Then, we have:
\begin{proposition} \label{prop:kflstar}
    If ${\tt TI}_{\gamma_n} (P)$ is derivable in $\kfls$, then ${\tt TI}_{\varphi_{\gamma_n} 0}(P)$ is derivable in $\kfls$.
\end{proposition}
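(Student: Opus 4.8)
The plan is to carry out, inside $\kfls$, Sch\"utte's lower-bound derivation for predicative analysis (his Lemma~9 and the ensuing main theorem, \cite{sch77}), with the $\mc{A}$-jump hierarchy $\{\T f_\zeta\}$ playing the role of the autonomous Veblen hierarchy. Since $\gamma_{n+1}=\vphi_{\gamma_n}0$ is by definition the least ordinal closed under every Veblen function $\vphi_\beta$ with $\beta\prec\gamma_n$, the content of the proposition is that progressiveness of the hierarchy stages \emph{below} $\gamma_n$ already suffices to lift transfinite induction for $P$ from $\gamma_n$ up to $\gamma_{n+1}$. The device that makes this possible -- and that has no analogue in the purely classical argument -- is the substitution rule of $\kfls$: it is what converts the hypothesis ${\tt TI}_{\gamma_n}(P)$, which only grants induction for the single schematic predicate $P$, into transfinite induction up to $\gamma_n$ for the truth-theoretic formulas on which the hierarchy lemmas depend.

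First I would fire the substitution rule on ${\tt TI}_{\gamma_n}(P)$, i.e.~on the sequent ${\tt Prog}(P)\Ra\forall\xi\prec\gamma_n\,P(\xi)$, which lies in $\lnat^\ra(P)$ and is therefore a legitimate target. Substituting the formula $B(\zeta):={\tt Prog}(\T f_\zeta)$ for $P$ produces ${\tt Prog}(B)\Ra\forall\xi\prec\gamma_n\,B(\xi)$, that is, transfinite induction up to $\gamma_n$ for the predicate ``stage $\zeta$ is progressive''. I would then invoke the preceding lemma -- the $\kfls$-analogue of Sch\"utte's Lemma~9 -- which asserts precisely that $\lambda\zeta.{\tt Prog}(\T f_\zeta)$ is progressive on $(0,\gamma_n)$, so that the antecedent ${\tt Prog}(B)$ is available; a single ${\tt Cut}$ then yields $\forall\zeta\prec\gamma_n\,{\tt Prog}(\T f_\zeta)$. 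Finally I would extract the target ordinal: unfolding $\T f_\zeta$ by Lemma~\ref{lem:hiert}, each stage $\zeta$ encodes, through the Veblen-jump formula $\mc{A}$ and the Gentzen jump $\mc{J}(\cdot,\vphi_{{\tt e}(\zeta)}y)$, the action of $\vphi_{{\tt e}(\zeta)}$ over the lower stages, and since $\T f_0(y)\lra P(y)$, progressiveness of all stages below $\gamma_n$ propagates ${\tt Prog}(P)$ upward just as the Gentzen-jump iteration of Lemma~\ref{lemmatij} does, but now along the full Veblen hierarchy rather than the $\omega$-exponential one. Collecting these instances -- exactly as in \cite{sch77} -- gives induction for $P$ up to the least ordinal closed under all $\vphi_\beta$ with $\beta\prec\gamma_n$, namely $\vphi_{\gamma_n}0=\gamma_{n+1}$, which is ${\tt TI}_{\vphi_{\gamma_n}0}(P)$.

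The main obstacle, I expect, is the side condition of the substitution rule, $\Ra\forall\zeta\,(B(\zeta)\vee\neg B(\zeta))$: the formula ${\tt Prog}(\T f_\zeta)$ contains the nonclassical conditional $\ra$, so its excluded middle cannot be taken for granted but must be traced, uniformly in $\zeta$, back to the determinateness of the truth ascriptions $\T f_\zeta$. I would secure this by the methods of the lower-bound section: the sentences $f_\zeta(\theta)$ are Tarskian, hence determinate ($\T f_\zeta(\theta)\vee\T\subdot\neg f_\zeta(\theta)$) at the well-founded stages, whence by the truth axiom ($\kfl$3) also $\T f_\zeta(\theta)\vee\neg\T f_\zeta(\theta)$, and the recapture Lemma~\ref{lem:rechyp}(ii) then propagates excluded middle through the conditionals and quantifiers that build ${\tt Prog}$. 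The delicate point is to arrange this determinateness -- and the instances of ${\tt TI}_{\gamma_n}$ supplied to Lemmas~\ref{lem:distru} and~\ref{lem:hiert} by substitution -- as a transfinite induction that at stage $\zeta$ appeals only to strictly lower stages, so that no instance presupposes the very induction being established; this is exactly where the nonclassical argument genuinely diverges from the classical one. A secondary difficulty is the ordinal bookkeeping in the last step, matching the hierarchy indices and the functions ${\tt e},{\tt h}$ to the Veblen value $\vphi_{\gamma_n}0$ via the closure properties of $\prec$, which I would handle as in \cite{sch77}.
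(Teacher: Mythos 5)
Your overall architecture matches the paper's: substitute $B(\zeta):={\tt Prog}(\T f_\zeta)$ into the schematic transfinite induction, cut against the progressiveness lemma for the stages of the $\mc{A}$-hierarchy, discharge the excluded-middle side condition via the determinateness of the Tarskian sentences $f_\zeta(\cdot)$, and read off the Veblen value by unfolding $\T f_\zeta$ through Lemma \ref{lem:hiert}. But there is a genuine gap in where you stop the hierarchy. You substitute directly into ${\tt TI}_{\gamma_n}(P)$ and therefore obtain only $\forall\zeta\prec\gamma_n\,{\tt Prog}(\T f_\zeta)$. The value $\vphi_{\gamma_n}0$ is encoded at stage $\omega^{\gamma_n}$ of the hierarchy, since it is ${\tt e}(\omega^{\gamma_n})=\gamma_n$ that feeds the subscript of the Veblen function inside $\mc{A}$; a hierarchy reaching only to $\gamma_n$ gives you access only to the jumps $\mc{J}(\cdot,\vphi_{{\tt e}(\zeta)}y)$ for $\zeta\prec\gamma_n$. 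Your closing step -- ``collecting these instances \ldots gives induction up to the least ordinal closed under all $\vphi_\beta$ with $\beta\prec\gamma_n$'' -- silently assumes that the exponents ${\tt e}(\zeta)$ for $\zeta\prec\gamma_n$ are cofinal in $\gamma_n$, i.e.\ that $\gamma_n$ is closed under $\beta\mapsto\omega^\beta$. This fails outright for $n=0$: below $\gamma_0=\omega$ every nonzero stage has ${\tt e}(\zeta)=0$, so your hierarchy only ever invokes $\vphi_0$, and no amount of collecting yields $\vphi_\omega 0$. Even for $n\geq 1$, where $\gamma_n$ is an epsilon number and a cofinality argument could in principle be arranged, you would still owe a suprema step that the paper does not need.

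The paper closes this gap with a preliminary move you omit: before substituting, it applies Corollary \ref{cor:clti} together with the determinateness of $P$ (clause (i) in the definition of $\kfls$) to lift ${\tt TI}_{\gamma_n}(P)$ to ${\tt TI}_{\omega^{\gamma_n}+1}(P)$ -- a single classical Gentzen jump, available precisely because $P$ satisfies excluded middle. The substitution rule is then fired on this longer induction, the progressiveness lemma delivers ${\tt Prog}(\T f_{\omega^{\gamma_n}})$ and hence $\T f_{\omega^{\gamma_n}}(0)$, and unfolding via Lemma \ref{lem:hiert} with ${\tt h}(\omega^{\gamma_n})=0$ and ${\tt e}(\omega^{\gamma_n})=\gamma_n$ yields $\mc{J}(\T f_0,\vphi_{\gamma_n}0)$ at $\zeta=y=0$, i.e.\ $\forall x\prec\vphi_{\gamma_n}0\,P(x)$ in one step. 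Your treatment of the side condition $\forall\zeta(B(\zeta)\vee\neg B(\zeta))$ and of the circularity worry about supplying ${\tt TI}$ to Lemmas \ref{lem:distru} and \ref{lem:hiert} is on target; what must be added is the classical lift from $\gamma_n$ to $\omega^{\gamma_n}+1$, without which the final extraction does not go through as described.
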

\begin{proof}
 We assume ${\tt Prog}(P)$. If ${\tt TI}_{\gamma_n} (P)$ is derivable in $\kfls$, then by Corollary \ref{cor:clti} and the determinateness of $P$ we can show ${\tt TI}_{\omega^{\gamma_n} +1} (P)$. By the substitution rule we get that the hierarchy predicates are well-defined. Additionally we can prove that 
 \begin{equation} \label{equdefhier}
 \forall \zeta \prec\omega^{\gamma_n} +1 \;\forall x ( \T_\zeta(x) \vee \neg \T_\zeta(x)).
 \end{equation}
   Notice that by \eqref{equdefhier} we can reformulate this fragment of the hierarchy by replacing all the occurrences of the ${ \bf HYPE}$-conditional by the material conditional. Therefore, we have: 
 \begin{equation} \label{equdishier}
     \forall \zeta \prec \omega^{\gamma_n} +1 ( \T f_\zeta (x) \leftrightarrow {\mc A} (\T f^\zeta, \zeta,x) ).
  \end{equation}

 By the previous lemma, for $a\prec \omega^{\gamma_n}+1$, 
\begin{equation}\label{eq:prohier}
    \forall b\prec a \;{\tt Prog}(\T f_b)\ra {\tt Prog}(\T f_a),
\end{equation}
and therefore, by the substitution rule applied to ${\tt TI}_{\gamma_n} (P)$ and \eqref{eq:prohier}, we have that ${\tt Prog}(\T f_{\omega^{\gamma_n}})$, which entails $\T f_{\omega^{\gamma_n}}({0})$.

Using \eqref{equdishier}, we have:
\begin{equation}
    \forall \zeta ({\tt h}(\omega^{\gamma_n})\preccurlyeq \zeta \prec \omega^{\gamma_n} \,\mc{J}(\T f_\zeta,\vphi_{{\tt e}(\omega^{\gamma_n})}0).
\end{equation}
However, since ${\tt h}(\omega^{\gamma_n}) = 0$ and ${\tt e}(\omega^{\gamma_n}) = \gamma_n$ we can then infer 
\begin{equation}
\forall \zeta \prec \omega^{\gamma_n} ( \forall y ( \forall x \prec y  \T f_\zeta (x) \rightarrow \forall x \prec y + \varphi_{\gamma_n}0 \: \T f_\zeta ( x ))).
\end{equation}
By letting $\zeta=y=0$, we obtain $\forall x\prec \varphi_{{\tt e}(\omega^{\gamma_n})}0\;P(x)$, as desired. 
\end{proof}

\begin{corollary}\label{cor:lbkfls}
    $\kfls$ defines the truth predicates of ${\bf RT}_{<\alpha}$, for $\alpha \prec \Gamma_0$. %Therefore, $\kfl$  proves ${\tt TI}_{<\vphi_{\varepsilon_0}0}(\lnat)$.
\end{corollary}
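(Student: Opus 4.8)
The plan is to derive the corollary by iterating Proposition~\ref{prop:kflstar} so as to push transfinite induction for the schematic predicate $P$ all the way up to $\Gamma_0$, and then to feed the resulting induction into the very same construction of ramified truth predicates already carried out for Corollary~\ref{cor:lowbound}. The arithmetical fact that makes the iteration terminate in the right place is that the fundamental sequence $\gamma_0=\omega$, $\gamma_{n+1}=\vphi_{\gamma_n}0$ is cofinal in $\Gamma_0$: each $\gamma_n$ is below $\Gamma_0$ (which is closed under $\xi\mapsto\vphi_\xi 0$), the sequence is strictly increasing (since $\gamma_n$, not being a fixed point, satisfies $\vphi_{\gamma_n}0\succ\gamma_n$), and its supremum is a fixed point of $\xi\mapsto\vphi_\xi 0$, hence equal to $\Gamma_0$. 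Consequently every $\alpha\prec\Gamma_0$ lies below some $\gamma_n$.

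For the induction proper I would first fix the base case: since $P$ behaves classically in $\kfls$ and induction up to $\omega$ is ordinary arithmetical induction, ${\tt TI}_{\gamma_0}(P)={\tt TI}_\omega(P)$ is available; indeed Theorem~\ref{thm:genhyp}, applied to the finite predicate expansion $\lnat^\ra(P)$, already secures ${\tt TI}_{<\varepsilon_0}(\lnat^\ra(P))$, so the starting point is comfortably in place. I would then argue by external induction on $n$: granting ${\tt TI}_{\gamma_n}(P)$, Proposition~\ref{prop:kflstar} yields ${\tt TI}_{\vphi_{\gamma_n}0}(P)={\tt TI}_{\gamma_{n+1}}(P)$. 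Hence ${\tt TI}_{\gamma_n}(P)$ is derivable for every $n$, and because $\{\xi\mid\xi\prec\alpha\}\subseteq\{\xi\mid\xi\prec\gamma_n\}$ whenever $\alpha\prec\gamma_n$, the schema ${\tt TI}_\alpha(P)$ is derivable for every $\alpha\prec\Gamma_0$. To convert this into definability of the ramified hierarchy I would read the Tarskian levels off the functions $f_\zeta$: Lemma~\ref{lem:hiert} shows that, once ${\tt TI}_\alpha$ is granted, the truth ascriptions $\T f_\zeta$ for $\zeta\prec\alpha$ obey exactly the recursive compositional clauses of the ramified predicates, so the $\T f_\zeta$ are the required truth predicates of ${\bf RT}_{<\alpha}$ (equivalently, one mirrors the progressiveness argument preceding Corollary~\ref{cor:lowbound}, establishing $\forall x({\tt Sent}_{\lt}(\beta,x)\ra\T\,x\vee\T\,\subdot\neg x)$ for each $\beta\prec\alpha$ and setting $\T_\beta(x):\lra{\tt Sent}^{<\beta}_{\lt}(x)\wedge\T(x)$).

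The genuinely delicate point is not the iteration but the transfer of induction from the \emph{bivalent} $P$ to the nonclassical truth-theoretic formulas for which compositionality must actually be verified: Proposition~\ref{prop:kflstar} delivers transfinite induction only for $P$ (and, through the substitution rule, for other determinate formulas), whereas $\forall x({\tt Sent}_{\lt}(\beta,x)\ra\T\,x\vee\T\,\subdot\neg x)$ contains $\T$ and need not satisfy excluded middle. This is precisely the obstacle the $\mc{A}$-jump encoding is designed to absorb, and it is discharged by the lemmas I may assume: by Lemma~\ref{lem:distru} the ramified levels are re-expressed as truth ascriptions on the $P$-based functions $f_\zeta$, and the level-by-level determinateness of these stages (secured via Corollary~\ref{cor:clti}) lets one replace the \textbf{HYPE}-conditional by the material conditional, so that classical induction on $P$ propagates well-definedness and full compositionality up the hierarchy. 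Granting that machinery, the corollary itself reduces to the clean external induction above, the only remaining care being the cofinality bookkeeping and the verification that the substitution rule applies uniformly at each level up to the supremum $\Gamma_0$.
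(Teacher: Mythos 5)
Your proposal matches the paper's intended argument: the corollary is left without an explicit proof precisely because it is meant to follow by iterating Proposition~\ref{prop:kflstar} along the sequence $\gamma_0=\omega$, $\gamma_{n+1}=\vphi_{\gamma_n}0$ cofinal in $\Gamma_0$ (with the base case supplied by Theorem~\ref{thm:genhyp}) and then feeding the resulting ${\tt TI}_\alpha(P)$, via the substitution rule and the determinateness of the hierarchy stages, into the same ramified-truth construction used for Corollary~\ref{cor:lowbound}. Your identification of the delicate point --- that induction is obtained only for $P$ and determinate formulas, and must be transferred to the $\T$-containing compositionality claims through the $\mc{A}$-jump encoding and Lemmas~\ref{lem:distru} and~\ref{lem:hiert} --- is exactly how the paper's surrounding lemmas discharge it.
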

Following the characterization of predicative analysis in terms of ramified systems given in \cite{fef64,fef91}, and the relationships between ramified truth and ramified analysis studied there, one can then conclude that the systems of ramified analysis below $\Gamma_0$ are proof-theoretically reducible to our system $\kfls$.

%UPPER BOUND
The argument employed in the previous section to show that $\kfl$ can be proof-theoretically reduced -- w.r.t.~arithmetical sentences -- to $\kf$ can be lifted to $\kfls$. One can consider the system $\kf^*$ -- ${\tt Ref^*(PA}(P))$ in \cite{fef91} --, and slightly modify the translations $\sigma$, $\tau$ from Definition \ref{def:trlkf}: in particular, we let 
\begin{align*}
    & (P s)^\sigma= (Ps)^\tau= Ps && (\neg Ps)^\tau= \neg Ps. 
\end{align*}
Then, by induction on the length of proof in $\kfls$, we can prove:
\begin{proposition}\label{prop:upschm}
     If $\kfls\vdash \Gamma \Ra \Delta$, then $\kf^* \vdash (\bigwedge\Gamma \ra \bigvee \Delta)^\sigma$.
\end{proposition}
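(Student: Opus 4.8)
The plan is to mirror the structure of the lower-bound argument used for $\kfl$ (Proposition \ref{prop:ubkfl}), lifting it to the schematic setting. The target is an induction on the height of the $\kfls$-derivation, showing that each sequent $\Gamma \Ra \Delta$ provable in $\kfls$ yields a $\kf^*$-proof of $(\bigwedge\Gamma \ra \bigvee\Delta)^\sigma$ under the modified translation that sends $Ps$ to $Ps$ on both the $\sigma$- and $\tau$-sides. Since $\kfls$ is just $\kfl$ augmented with the classicality axiom $\forall x(P(x)\vee\neg P(x))$, the disquotational axiom $(\kfl$P$)$, and the substitution rule, I expect almost every inductive case to be handled verbatim as in the proof of Proposition \ref{prop:ubkfl}; the only genuinely new work concerns the three additional ingredients.

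First I would dispatch the two new axioms. For $\forall x(P(x)\vee\neg P(x))$, the $\sigma$-translation yields $\forall x(Px\vee\neg Px)$, which $\kf^*$ proves since $P$ behaves classically in ${\tt Ref^*(PA}(P))$. For the disquotational axiom $(\kfl$P$)$, whose $\sigma$-translation requires $\kf^*\vdash \mbb{T}\tau(\corn{P\dot x})\equiv Px$, I would note that this is exactly the $\kf^*$-analogue of the disquotation clause for $P$, provable from the compositional truth axioms of $\kf^*$ together with the stipulated interpretation of $P$ inside the truth predicate — precisely parallel to the treatment of equation \eqref{eq:trakfl} in the base case, and settled by the same kind of formal induction on syntactic complexity.

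The main obstacle, and the heart of the argument, is the substitution rule. Here the premisses are $\Ra\forall x(B(x)\vee\neg B(x))$ and an \emph{arithmetical} sequent $\Gamma(P)\Ra\Delta(P)$ with $\Gamma,\Delta\subseteq\lnat^\ra(P)$, and the conclusion is $\Gamma(B/P)\Ra\Delta(B/P)$. By induction hypothesis $\kf^*$ proves $(\bigwedge\Gamma(P)\ra\bigvee\Delta(P))^\sigma$ and $(\forall x(B(x)\vee\neg B(x)))^\sigma$. The key point is that $\kf^*$ itself is closed under an analogous substitution rule — this is the defining feature of ${\tt Ref^*(PA}(P))$ in \cite{fef91}, which permits replacing $P$ by arbitrary formulas $B^\sigma$ whose classicality (excluded middle) is provable. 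I would therefore argue that, since the translated premiss $(\bigwedge\Gamma\ra\bigvee\Delta)^\sigma$ is an arithmetical $\lt^\ra(P)$-consequence with $P$ occurring only classically, and since $B^\sigma$ provably satisfies excluded middle in $\kf^*$, Feferman's substitution rule applied to the $\kf^*$-derivation yields $(\bigwedge\Gamma(B/P)\ra\bigvee\Delta(B/P))^\sigma$. The subtlety to verify carefully is that $\sigma$ commutes with the substitution of $B$ for $P$ in the relevant way, i.e.\ that $(\Gamma(B/P))^\sigma$ coincides with $\Gamma^\sigma(B^\sigma/P)$ on the arithmetical fragment; this holds because on $\lnat^\ra(P)$-formulas $\sigma$ commutes with all the connectives and leaves the atomic $P$ untouched by the modified clause, so the object of substitution lines up correctly.

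Once these three cases are in place, the remaining inductive steps — the logical rules of ${\bf G1h_{cd}}$, the arithmetical axioms, and the truth axioms $(\kfl$1$)$–$(\kfl$6$)$ — are identical to those already verified in Proposition \ref{prop:ubkfl}, so the induction closes and the proposition follows.
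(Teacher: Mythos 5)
Your proposal takes essentially the same route as the paper, whose entire proof is the single remark that the claim follows ``by induction on the length of proof in $\kfls$'' under the modified translations; you simply supply the details the paper leaves implicit, and the handling of the new axioms and of the substitution rule via the corresponding rule of ${\tt Ref^*(PA}(P))$ is the intended argument. The one point to verify more carefully than you do is the commutation claim $(\Gamma(B/P))^\sigma=\Gamma^\sigma(B^\sigma/P)$: if $B$ is literally a $\T$-atom, a negated occurrence $\neg P t$ in $\Gamma$ becomes $(\neg \T s)^\sigma={\mbb F}\tau(s)$ rather than $\neg{\mbb T}\tau(s)$, and since the version of $\kf$ used here admits gluts these two are not interchangeable for free -- one must invoke the translated classicality premise $B^\sigma(x)\vee(\neg B(x))^\sigma$ and check that the implication needed goes in the right polarity.
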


Given the analysis of $\kf^*$ given in \cite{fef91}, the combination of Propositions \ref{prop:upschm} and \ref{prop:kflstar} yields a sharp proof-theoretic analysis also for $\kfls$:
\begin{corollary}
    $|\kfls|=|\kf^*|=\Gamma_0$.
\end{corollary}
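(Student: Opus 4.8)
The plan is to prove the two inequalities $\Gamma_0\leq |\kfls|$ and $|\kfls|\leq |\kf^*|$ and then combine them with Feferman's computation $|\kf^*|=\Gamma_0$ from \cite{fef91}. Throughout, I would keep in mind that the translation $\sigma$ of Definition \ref{def:trlkf} leaves the arithmetical vocabulary untouched, so that these are genuinely bounds on the \emph{arithmetical} strength of the theories, i.e.~on the supremum of the ordinals $\alpha$ for which the relevant transfinite induction over $\lnat$ is provable.

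For the upper bound I would appeal to Proposition \ref{prop:upschm} directly. If $A$ is a purely arithmetical sentence of $\lnat$ with $\kfls\vdash A$, then $A^\sigma=A$, since $\sigma$ acts as the identity on the $\ra$-free arithmetical fragment; hence $\kf^*\vdash A$. Thus every arithmetical theorem of $\kfls$ is an arithmetical theorem of $\kf^*$, and in particular $\kfls$ proves no more transfinite induction over $\lnat$ than $\kf^*$ does. This yields $|\kfls|\leq |\kf^*|=\Gamma_0$.

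For the lower bound I would iterate Proposition \ref{prop:kflstar} along the fundamental sequence $\gamma_0=\omega$, $\gamma_{n+1}=\vphi_{\gamma_n}0$. The base case ${\tt TI}_{\gamma_0}(P)={\tt TI}_\omega(P)$ is available in $\kfls$, as transfinite induction up to $\omega$ reduces to ordinary induction and $P$ is stipulated to behave classically; each application of the proposition then carries us from ${\tt TI}_{\gamma_n}(P)$ to ${\tt TI}_{\gamma_{n+1}}(P)$. Since $\Gamma_0$ is the least fixed point of $\alpha\mapsto\vphi_\alpha 0$, the $\gamma_n$ are cofinal in $\Gamma_0$, so $\kfls$ proves ${\tt TI}_\alpha(P)$ for every $\alpha\prec\Gamma_0$; using the classicality of $P$ and the substitution rule one descends to ${\tt TI}_{<\Gamma_0}(\lnat)$. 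Equivalently, by Corollary \ref{cor:lbkfls} the theory defines the ramified truth predicates of ${\bf RT}_{<\alpha}$ for all $\alpha\prec\Gamma_0$, and Feferman's reduction of predicative ramified analysis to ramified truth \cite{fef64,fef91} transfers their joint strength, giving $\Gamma_0\leq |\kfls|$.

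Combining the two bounds yields $\Gamma_0\leq |\kfls|\leq |\kf^*|=\Gamma_0$, so all three quantities coincide. The main obstacle is the lower bound: one must verify that iterating Proposition \ref{prop:kflstar} really reaches cofinally into $\Gamma_0$, and that the passage from the schematic ramified-truth hierarchy back to arithmetical transfinite induction is legitimate — this is precisely where Feferman's analysis of ${\tt Ref}^*({\tt PA}(P))$ and the characterization of $\Gamma_0$ as the least fixed point of the Veblen function enter. The upper bound, by contrast, is essentially immediate once Proposition \ref{prop:upschm} is in hand.
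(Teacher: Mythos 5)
Your proposal is correct and follows essentially the same route as the paper: the upper bound comes from Proposition \ref{prop:upschm} together with the fact that $\sigma$ is the identity on arithmetical sentences, the lower bound from iterating Proposition \ref{prop:kflstar} along the sequence $\gamma_n$ cofinal in $\Gamma_0$ (as packaged in Corollary \ref{cor:lbkfls}), and the identification $|\kf^*|=\Gamma_0$ is imported from Feferman's analysis of ${\tt Ref}^*({\tt PA}(P))$. The paper leaves all of this implicit in a single sentence, so your write-up is simply a more explicit version of the intended argument.
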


\section{Laws of Truth and Intensionality}\label{sec:conc}

% \begin{itemize}
%     \item The conditional is intensional: a theoretical device to navigate the possible extensions of $\T$ (a semantic intuitive pricture but also a direct justification of the laws of $\kfl$). Defend a little this conditional: how does it improve on the classical? How does it improve on the nonclassical?
%     \item In intensional contexts, it's clear that the conditional doesn't hold, so we shouldn't expect to have it. 
%     \item However, this is still a big improvement on inferential versions of transparency, because we can formulate truth theoretic and mixed logico-mathematical-truth theoretic laws such as induction in the object language.
%     \item This last phenomenon is at the roots of the strength of our systems, and this was the focus of our paper...
% \end{itemize}

%little intro
The main aim of the paper is to show that $\kfl$ and $\kfls$ are proof-theoretically strong. We now conclude by discussing some of their philosophical virtues. We focus on $\kfl$, but our discussion transfers with little modification to $\kfls$. In particular, we now argue that $\kfl$ displays some advantages with respect to its direct competitors in classical logic ($\kf$) and nonclassical logic ($\pkf$). 

%The T-schema in kfl: better than classical logic is some respects
Even truth theorists that consider classical logic as superior do not question the importance of the disquotational intuition for our philosophical notion of truth \cite[p.~189]{fef12}. Theories such as $\kf$ can only approximate such intuition, by restricting it to sentences that are `grounded', in the sense of being provably true or false. $\kfl$ can preserve such intuition in great generality, by validating the $\T$-schema for sentences not containing the conditional $\ra$. Typically, however, nonclassical theories pay tribute to this greater vicinity to the unrestricted $\T$-schema (cf.~Lemma \ref{lemm:disqschem}) with a substantial loss in logical and deductive power. This is not so for $\kfl$: its proof-theoretic strength matches the one of $\kf$.

%advantages over the nonclassical version
$\kfl$ appears also to improve on the philosophical rationale behind the fully disquotational truth predicate of $\pkf$. Because of their missing conditional, all variants of $\pkf$ do not have the means to express in the object language their basic principles of truth.  $\kfl$ overcomes these liminations by replacing this metatheoretic inferential apparatus by truth theoretic laws formulated by means of the ${\bf HYPE}$ conditional. This also enables us to formulate fully in the language of $\kfl$ principles of `mixed' nature, such as induction principles open to the truth predicate or, in the case of $\kfls$, additional predicates. This is the root of the increased proof-theoretic strength of $\kfl$. As a consequence, we are also able to speak more fully about the truth (and falsity) of non-semantic sentences of $\lt^\ra$ \cite[p.~391]{lei19}: for instance, if compared to $\pkf$, $\kfl$ can prove many more iterations of the truth predicate over basic non-semantic truths such as $0=0$ (Corollary \ref{cor:lowbound}).

%Intensionality
For a full philosophical defence of $\kfl$ -- which, however, is not the main aim of this paper --, it is important to say something about the role of the conditional of ${\bf HYPE}$ and its interaction with the truth predicate of $\kfl$. There are at least two ways of doing so. One could follow Leitgeb in providing a semantic explanation of the intensional nature of the ${\bf HYPE}$ conditional. According to Leitgeb, truth ascriptions are evaluated \emph{locally}, at each fixed-point, whereas conditional statements are evaluated \emph{globally}, that is, by looking at the entire structure of fixed points. Therefore, if the $\T$-schema held also for conditional claims, a truth ascription that contains the conditional would need to be evaluated both locally and globally, which would amount to a category mistake in $\mathfrak{M}_\Phi$. 

Alternatively, one could attempt a direct proof-theoretic explanation of the interaction of the truth predicate and the ${\bf HYPE}$-conditional. Leon Horsten \cite{hor11} defends $\pkf$ on the basis of inferential deflationism: the basic principles of disquotational truth are given inferentially, and essentially so \cite[\S10.2]{hor11}.  Horsten claims in particular that the laws of truth can only be expressed on the background of an inferential apparatus which is not part of the language to which truth is applied. %He goes as far as saying that truth is an essentially inferential notion \cite[\S10.2]{hor11}.  
One might extend Horsten's inferential approach to the present case, and argue that $\kfl$ characterizes truth in a similar fashion. The laws of truth are given on the background of a theoretical apparatus that essentially involves the conditional of ${\bf HYPE}$. Such theoretical apparatus amounts to the inferential structure of the truth laws of $\pkf$, but now formulated in the language of our theory of truth. This would also explain why the $\T$-schema only holds for sentences that do not contain the ${\bf HYPE}$-conditional. On this picture, one should not expect the conditional of ${\bf HYPE}$ to appear in truth ascriptions in $\kfl$, in the same way as one should not expect inferential devices of $\pkf$ (such as sequent arrows) to appear under the scope of its truth predicate. 

As mentioned, a full philosophical defence of $\kfl$ is outside the scope of our paper, and it will be carried out in future work. 

%Further work?

\bibliographystyle{alpha}
\bibliography{litHB}

\end{document}